\def\@cite#1#2{{\m@th\upshape\bfseries%
[{#1\if@tempswa{\m@th\upshape\mdseries, #2}\fi}]}}
\theoremstyle{plain}
\newtheorem{thm}{Theorem}[section]
\newtheorem{cor}[thm]{Corollary}
\newtheorem{prop}[thm]{Proposition}
\newtheorem{lem}[thm]{Lemma}
\newtheorem{lemma}[thm]{Lemma}
\newtheorem{sublem}[thm]{Sublemma}
\theoremstyle{definition}
\newtheorem{defn}[thm]{Definition}
\newtheorem{ass}[thm]{Assumption}
\newtheorem{ex}[thm]{Example}
\newtheorem{prob}[thm]{Problem}
\theoremstyle{}
\newtheorem{rem}[thm]{Remark}
\numberwithin{equation}{subsection}
\renewcommand{\bold}[1]{\medskip \noindent {\bf #1 }\nopagebreak}
\newcommand{\nc}{\newcommand}
\newcommand{\rnc}{\renewcommand}
\newcommand{\e}{\varepsilon}
\nc\bA{\mathbb{A}}
\nc\bB{\mathbb{B}}
\nc\bC{\mathbb{C}}
\nc\bD{\mathbb{D}}
\nc\bE{\mathbb{E}}
\nc\bF{\mathbb{F}}
\nc\bG{\mathbb{G}}
\nc\bH{\mathbb{H}}
\nc\bI{\mathbb{I}}
\nc{\bJ}{\mathbb{J}} 
\nc\bK{\mathbb{K}}
\nc\bL{\mathbb{L}}
\nc\bM{\mathbb{M}}
\nc\bN{\mathbb{N}}
\nc\bO{\mathbb{O}}
\nc\bP{\mathbb{P}}
\nc\bQ{\mathbb{Q}}
\nc\bR{\mathbb{R}}
\nc\bS{\mathbb{S}}
\nc\bT{\mathbb{T}}
\nc\bU{\mathbb{U}}
\nc\bV{\mathbb{V}}
\nc\bW{\mathbb{W}}
\nc\bY{\mathbb{Y}}
\nc\bX{\mathbb{X}}
\nc\bZ{\mathbb{Z}}
\nc\cA{\mathcal{A}}
\nc\cB{\mathcal{B}}
\nc\cC{\mathcal{C}}
\rnc\cD{\mathcal{D}}
\nc\cE{\mathcal{E}}
\nc\cF{\mathcal{F}}
\nc\cG{\mathcal{G}}
\rnc\cH{\mathcal{H}}
\nc\cI{\mathcal{I}}
\nc{\cJ}{\mathcal{J}} 
\nc\cK{\mathcal{K}}
\rnc\cL{\mathcal{L}}
\nc\cM{\mathcal{M}}
\nc\cN{\mathcal{N}}
\nc\cO{\mathcal{O}}
\nc\cP{\mathcal{P}}
\nc\cQ{\mathcal{Q}}
\rnc\cR{\mathcal{R}}
\nc\cS{\mathcal{S}}
\nc\cT{\mathcal{T}}
\nc\cU{\mathcal{U}}
\nc\cV{\mathcal{V}}
\nc\cW{\mathcal{W}}
\nc\cY{\mathcal{Y}}
\nc\cX{\mathcal{X}}
\nc\cZ{\mathcal{Z}}
\nc\wt{\widetilde}
\newcommand{\bk}{{\mathbf{k}}}
\newcommand{\M}{\mathcal{M}}
\newcommand{\R}{\mathbb R}
\newcommand{\Z}{\mathbb Z}
\newcommand{\Q}{\mathbb Q}
\newcommand{\GL}{\mathrm{GL}}
\renewcommand{\Xmin}{(X_{\operatorname{min}}, \omega_{\operatorname{min}})}
\newcommand{\Qmin}{(Q_{\operatorname{min}}, q_{\operatorname{min}})}
\newcommand{\piX}{\pi_{X_{\operatorname{min}}}}
\newcommand{\piQ}{\pi_{Q_{\operatorname{min}}}}
\nc{\dmo}{\DeclareMathOperator}
\rnc{\Re}{\operatorname{Re}}
\rnc{\Im}{\operatorname{Im}}
\dmo{\rank}{rank}
\dmo{\End}{End}
\dmo{\Hom}{Hom}
\dmo{\Jac}{Jac}
\dmo{\Id}{Id}
\dmo{\Ann}{Ann}
\dmo{\Area}{Area}
\dmo{\CP}{\bC P^1}
\dmo{\Aut}{Aut}
\title{Marked points on translation surfaces}
\author[Apisa]{Paul~Apisa}
\author[Wright]{Alex~Wright}
\begin{document}
\maketitle
\thispagestyle{empty}

\selectlanguage{english}
\begin{abstract}
We show that all  $GL^+(2, \bR)$ equivariant point markings over orbit closures of translation surfaces arise from branched covering constructions and periodic points, completely classify such point markings over strata of quadratic differentials, and give applications to the finite blocking problem. 
\end{abstract}

\setcounter{tocdepth}{1} 
\tableofcontents

%

\section{Introduction}\label{S:intro}

In this paper, we give new results on the $GL^+(2, \bR)$ action on moduli spaces of translation surfaces with marked points,  and applications such as the following.

\bold{The finite blocking problem.} We say that two, not necessarily distinct, points $x_1,x_2$ on a rational polygon  are finitely blocked if there is a finite set $B$ of  points such that every billiard trajectory from $x_1$ to $x_2$ passes through a point of $B$. We call a polygon Gaussian if it can be tiled by isometric  $(\frac\pi4, \frac\pi4, \frac\pi2)$ triangles in such a way that triangles with overlapping edges  are related via reflection in that edge. Similarly we call it rectangle-tiled if it can be tiled by rectangles and Eisenstein if it can be tiled by  $(\frac\pi6, \frac\pi3, \frac\pi2)$ triangles. We adopt the convention that all polygons are required to have connected boundary and do not have slits.

\begin{thm}\label{T:poly}
Let $P$ be a rational polygon. 
\begin{enumerate}
\item If $P$ is Gaussian, Eisenstein, or rectangle-tiled, then any two points are finitely blocked. 
\item If $P$ is not Gaussian, Eisenstein, or rectangle-tiled and all its angles are multiples of $\pi/2$, then possibly infinitely many pairs of points are finitely blocked, but each point is finitely blocked from only finitely many other points. 
\item Otherwise, only finitely many pairs of points are finitely blocked in $P$.  
\end{enumerate}
\end{thm}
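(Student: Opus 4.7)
The plan is to reduce finite-blocking questions about $P$ to a classification of $GL(2, \bR)$-equivariant markings on a translation-surface orbit closure, then invoke the main theorem of the paper. First I would apply the Katok--Zemlyakov unfolding (passing to an orientation double cover as needed) to produce from $P$ a translation surface $(X_P, \omega_P)$ on which every billiard trajectory in $P$ lifts to a straight segment. A pair $x_1, x_2 \in P$ is finitely blocked in $P$ if and only if every pair of corresponding lifts $\tilde x_1, \tilde x_2 \in X_P$ is finitely blocked as points on the translation surface, so the theorem reduces to a classification of finite-blocking pairs on $(X_P, \omega_P)$, to be carried out inside the $GL(2, \bR)$-orbit closure $\cM$ of $(X_P, \omega_P)$.

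The bridge from the main classification theorem to blocking is the criterion I would aim to establish: a pair $(y_1, y_2)$ on some $(X, \omega) \in \cM$ is finitely blocked if and only if the pair arises from one of the classified equivariant markings --- either via a translation cover $\pi: (X, \omega) \to (Y, \omega_Y)$ in the branched-cover structure of $\cM$ with $\pi(y_1)$ and $\pi(y_2)$ themselves finitely blocked on $(Y, \omega_Y)$, or via a periodic-point marking. The clean base cases for the cover step are $Y$ a torus (where every pair is blocked by the four lifts of the midpoint $(\pi(y_1)+\pi(y_2))/2$) and $Y$ a quadratic differential on $\bC P^1$ with hyperelliptic involution $\iota$ (where blocked pairs are exactly those of the form $(y, \iota(y))$).

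The three cases of the theorem then correspond to the possible cover structures on the unfolding of $P$. For Gaussian or Eisenstein $P$, the $(\pi/4,\pi/4,\pi/2)$ or $(\pi/6,\pi/3,\pi/2)$ tiling extends under unfolding to a translation cover $(X_P, \omega_P) \to \bC/\bZ[i]$ or $\bC/\bZ[\omega]$, so every pair of lifts is finitely blocked by pulled-back midpoints from the base torus. For right-angled non-Gaussian $P$, the direction-group unfolding is naturally a quadratic differential on $\bC P^1$ equipped with a hyperelliptic-type involution, and the main classification restricts the equivariant markings on $\cM$ to involution partners together with finitely many periodic points, giving finitely many blocking partners per point while allowing infinitely many blocked pairs overall. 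Otherwise $P$ has some angle not in $\frac{\pi}{2}\bZ$ and the unfolding admits no such torus or quadratic cover, so only periodic-point blocking survives and only finitely many pairs are finitely blocked.

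The main technical obstacle will be establishing the blocking criterion, particularly the harder direction that finite blocking forces an equivariant structure. The easier direction --- that equivariant markings coming from branched covers yield finite blocking via pulled-back blockers --- follows directly by blocking trajectories on the base and lifting them to $(X_P, \omega_P)$. The reverse direction requires leveraging the hypothesis that all straight segments from $y_1$ to $y_2$ hit a fixed finite set to extract a $GL(2,\bR)$-persistent coincidence in the orbit closure of $(X_P, \omega_P, y_1, y_2)$, which should then fall under the main classification. Additionally, for the third case one must verify that a polygon with an angle not in $\frac{\pi}{2}\bZ$ admits no spurious branched-cover structure compatible with the orbit closure, which should follow from rigidity of the main theorem.
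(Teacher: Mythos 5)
There is a genuine gap, concentrated in your case 3. Your argument there rests on the claim that if some angle of $P$ is not a multiple of $\pi/2$ (and $P$ is not Gaussian/Eisenstein), then ``the unfolding admits no such torus or quadratic cover, so only periodic-point blocking survives.'' That claim is false. If $k>2$ is the least common denominator of the angles over $\pi$, the unfolding carries an order-$k$ rotational symmetry $T$ with $T^*\omega=\xi\omega$; whenever $k$ is even, $T^{k/2}$ is an involution negating $\omega$, so the unfolding \emph{does} cover a nontrivial quadratic differential (e.g.\ the $(\tfrac{\pi}{8},\tfrac{3\pi}{8},\tfrac{\pi}{2})$ triangle), and by Lemma~\ref{L:involution-blocking} the unfolded translation surface then has \emph{infinitely} many finitely blocked pairs $(p,j(p))$. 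So on the translation surface level cases 2 and 3 look the same, and your scheme can only deliver ``each point has finitely many partners,'' not ``finitely many pairs.'' The missing idea is the paper's Theorem~\ref{T:FBK}: a blocked pair $s_1,s_2$ on the polygon forces the \emph{entire} preimage sets (which are $T$-orbits) to be mutually blocked, hence by Theorem~\ref{T:cor} the whole $T$-orbit of a non-special point must be collapsed by $\piQ$ to a single point of $\Qmin$; but $T$ descends to an automorphism $t$ of $\Xmin$ whose relevant action collapses a $T$-orbit to a $t$-orbit of size at most two, contradicting $k>2$ except for the finitely many points lying over periodic points or fixed points of powers of $t$. Without this orbit argument (applied to the $k$-differential obtained by doubling/quotienting $P$), case 3 does not follow from the classification of markings.

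A second, smaller gap: both cases 2 and 3 need the \emph{converse} of your Gaussian/Eisenstein observation, namely that a rational polygon which is not Gaussian or Eisenstein does not unfold to a torus cover (otherwise Theorem~\ref{T:periodic} gives infinitely many periodic points and Theorem~\ref{T:cor} does not apply, and every point would be blocked from every other point). You assert this but give no argument. In the paper this is Proposition~\ref{P:toruscover}: the torus-cover hypothesis puts $\omega$ in a $2$-dimensional rational subspace invariant under $T^*$, forcing all Galois conjugates of $\xi$ to appear as eigenvalues there, hence $[\bQ(\xi):\bQ]\le 2$ and $k\in\{2,3,4,6\}$; one then shows the relative periods are rational combinations of absolute periods, so after normalization they lie in $\bZ[i]$ or $\bZ[\xi_3]$ up to scale, which is exactly the Gaussian/Eisenstein tiling condition. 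Your reduction of blocking on $P$ to blocking of all lifts, and your use of the main classification to get the forward blocking criterion (the analogue of Theorem~\ref{T:cor}), are in line with the paper; but note also that your ``if and only if'' blocking criterion is stronger than what is true (the paper's remark after Theorem~\ref{T:blockingsets} points out the converse fails for periodic pairs and for pairs identified under $\piX$), though only the forward direction is actually needed.
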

The main content is the third statement; the first two are included for completeness and are closely related to previous results (see for example Theorems 1 and 2 of Leli\`evre, Monteil, Weiss~\cite{LMW}). We prove Theorem \ref{T:poly} by showing that every translation surface that is not a branched cover of a torus has a finite set that, together with covering maps to half-translation surfaces, accounts for all finite blocking (see Theorems \ref{T:cor} and \ref{T:blockingsets}). Theorem~\ref{T:poly} builds upon and recovers results of authors such as Gutkin, Hubert, Leli\`evre, Monteil, Schmidt, Schmoll, Troubetzkoy, and Weiss, which we will discuss shortly. Our results also apply to the illumination problem, which is the special case of the finite blocking problem when the blocking set is required to be empty.

\bold{Affine invariant submanifolds.} Given a partition of $2g-2$ as a sum of positive integers $2g-2=\sum_{i=1}^s k_i$, define the stratum $\cH(k_1, \ldots, k_s)$ to be the orbifold of all translation surfaces $(X,\omega)$ where $X$ has genus $g$ and $\omega$ has zeros of order $k_1, \ldots, k_{s}$. A result of Eskin-Mirzakhani-Mohammadi \cite{EM, EMM} gives that any closed $GL^+(2, \bR)$ invariant subset of a stratum is an affine invariant submanifold, which is by definition a properly immersed smooth orbifold whose image is locally described by real homogeneous linear equations in period coordinates. 

\bold{Marked points on translation surfaces.} Let $\cM$ be an affine invariant submanifold of a stratum $\cH$ of  translation surfaces. Let $\cH^{*n}$ denote the set of surfaces in $\cH$ with $n$ distinct marked points, none of which coincide with each other or with zeros of the Abelian differential. Let $\pi: \cH^{*n}\to \cH$ be the map that forgets the marked points.  

Define an $n$-point marking over $\cM$ to be an affine invariant submanifold $\cN$ of $\cH^{*n}$ such that $\pi(\cN)$ is equal to a dense subset of $\cM$ (equivalently, $\pi(\cN)$ contains $\cM$ minus a finite, possibly empty, union of smaller dimensional affine invariant submanifolds). Define an $\cM$-periodic point to be a $1$-point marking over $\cM$ of the same dimension as $\cM$. 

\begin{thm}[Eskin-Filip-Wright]\label{T:periodic}
An affine invariant submanifold has infinitely many periodic points if and only if it consists entirely of branched covers of tori.
\end{thm}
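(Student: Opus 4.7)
The forward direction (``branched covers of tori $\Rightarrow$ infinitely many periodic points'') is essentially a construction. Suppose every $(X,\omega) \in \cM$ comes with a branched translation covering $f:(X,\omega) \to (T, dz)$ to an elliptic curve. Then $f$ is determined by $\omega$ up to finite ambiguity and varies $GL(2,\bR)$-equivariantly with $(X,\omega)$. For each torsion point $t \in T$ the preimage $f^{-1}(t)$ is thus a finite union of $\cM$-periodic points, producing infinitely many such points as $t$ varies over the torsion subgroup of $T$.

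For the hard direction, my plan is to start with a single $\cM$-periodic point $p$ and extract a linear constraint. The point $p$ gives a $GL(2,\bR)$-equivariant section of the forgetful map $\pi: \cH^{*1}\to \cH$ over a dense open subset of $\cM$; since the corresponding $1$-point marking is an affine invariant submanifold of the same dimension as $\cM$, in period coordinates the extra relative period $\int_{\zeta}^{p}\omega$ (with $\zeta$ a chosen zero of $\omega$) must be cut out by a single linear relation with the absolute periods. By the field-of-definition results of Wright and Filip, this relation has coefficients in the number field $\bk_\cM$ of $\cM$. Equivalently, on each fiber $(X,\omega) \in \cM$, the Abel--Jacobi image of $p$ in $\Jac(X)$ is forced to lie in a specific $\bk_\cM$-affine torsion coset of an abelian subvariety of $\Jac(X)$.

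Now I would invoke the hypothesis of infinitely many periodic points $p_1,p_2,\ldots$. Each $p_i$ gives a distinct such linear relation, and on a fixed generic $(X,\omega)$ these specify infinitely many ``special'' points on the curve $X$ viewed inside $\Jac(X)$. This is precisely the setup for a Manin--Mumford/Raynaud-style theorem: a curve of genus $g\ge 2$ embedded in its Jacobian contains only finitely many such special points unless $\Jac(X)$ admits an elliptic factor $E$ that $X$ covers. I would conclude that $(X,\omega)$ admits a branched translation covering to $E$, and then propagate this structure from a generic fiber to all of $\cM$ using connectedness of $\cM$ and the $GL(2,\bR)$-equivariance of the covering.

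The main obstacle is the third step, the Manin--Mumford-type input. Standard Raynaud treats torsion conditions over $\bQ$, whereas in our setting the relations live over the potentially larger number field $\bk_\cM$, and ``specialness'' of a periodic point is finer than ordinary torsion --- it encodes the arithmetic structure of $\cM$ coming from Filip's algebraicity theorem and the Hodge-theoretic decomposition of the Kontsevich--Zorich cocycle. Formulating the correct relative Manin--Mumford statement, verifying its hypotheses from the monodromy of the KZ cocycle over $\cM$, and passing from the existence of one elliptic factor to a single global torus covering compatible with the $GL(2,\bR)$-action, is where the deepest input from the cited work of Eskin, Filip, and Wright will be needed.
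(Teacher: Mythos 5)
There is a genuine gap in the hard direction, and you have in effect named it yourself: the ``Manin--Mumford-style'' dichotomy you invoke is not an existing theorem, and as stated it is not even the right shape. Raynaud's theorem says a genus $\ge 2$ curve in its Jacobian contains only finitely many torsion points unconditionally --- there is no ``unless $\Jac(X)$ has an elliptic factor covered by $X$'' exception --- so the statement you need must concern a different notion of special point (real-linear period relations, or twisted torsion in a factor of the Jacobian), and it is precisely here that the work begins rather than ends. Two substantive steps are missing: first, converting the defining linear relation of a periodic point --- which a priori has real coefficients relating one relative period to absolute periods --- into an algebraic/torsion-type condition on each fiber requires Filip's algebraicity and torsion results; second, the finiteness-versus-elliptic-factor dichotomy for infinitely many such conditions, uniformly over $\cM$ and compatibly with the $GL(2,\bR)$-action, is exactly the deep content of the cited Eskin--Filip--Wright results. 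So your outline reduces the theorem to an unproved relative Manin--Mumford statement whose precise formulation and verification is the whole difficulty; it does not constitute a proof.

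For contrast, the paper does not argue fiberwise at all. It observes that an $\cM$-periodic point $\cN$ is itself an affine invariant submanifold of $\cM^{*1}$ (the preimage of $\cM$ in $\cH^{*1}$) of the \emph{same rank} as $\cM^{*1}$, and that if $\cM$ does not consist of torus covers then $\cM^{*1}$ has higher rank or affine field of definition of degree greater than one. Theorem 1.5 of Eskin--Filip--Wright then says such a manifold cannot properly contain infinitely many affine invariant submanifolds of the same rank, giving finiteness immediately. In other words, the intended proof is a short reduction to a quoted black-box finiteness theorem about invariant submanifolds, not a new diophantine argument on individual surfaces; if you want to pursue your route you would essentially be reproving (a special case of) that black box.
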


 Section \ref{SS:EFW} explains why  Theorem \ref{T:periodic} is a special case of results in \cite{EFW}. 
 
If $\cM$ is a closed orbit containing $(X,\omega)$, the $\cM$-periodic points correspond to finite orbits of the affine symmetry group on $(X,\omega)$.  As we will discuss, in this case Theorem \ref{T:periodic} was previously known \cite{GHS,M2}.

An easy example of periodic points is provided by Weierstrass points whenever $\cM$ consists of hyperelliptic surfaces. 

Given an $n'$-point marking $\cN'$ over $\cM$, and an $n''$-point marking $\cN''$ over $\cM$, we will say that any irreducible component of 
 $$\{(X,\omega, S'\cup S''): (X, \omega, S')\in \cN', (X, \omega, S'')\in \cN'', |S_1\cup S_2|=n'+n''\}$$
 is a fiberwise union of $\cN'$ and $\cN''$. One defines a fiberwise union of more than two point markings in the same way. 
 
 We say that an $n$-point marking $\cN$ over $\cM$ is \emph{reducible} if it is a fiberwise union of other point markings.  We call a point marking irreducible if it is not reducible.  
 
 Every point marking is a fiberwise union of irreducible point markings, which we call its irreducible pieces. 
 In this way,
%
%
 %
 %
 the study of point markings immediately reduces to the irreducible case.

The following result  states that when $\cM$ does not consist entirely of torus covers the only non-obvious ways to mark points over $\cM$ are to mark $\cM$-periodic points. This result arose during conversations with Ronen Mukamel.  

\begin{thm}\label{T:main}
Let $\cM$ be an affine invariant submanifold that does not consist entirely of branched covers of tori. Any irreducible $n$-point marking $\cN$ over $\cM$ with $n>1$ arises from a half-translation surface covering construction: for any $(X, \omega, S)\in \cN$, there is a translation covering map from $(X,\omega)$ to a half-translation surface that takes $S$ to a single point. 
\end{thm}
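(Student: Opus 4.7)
The plan is to show that for any $(X,\omega,S) \in \cN$ there exists a finite group $G$ of automorphisms of $X$ with $g^*\omega = \pm\omega$ for each $g \in G$ such that $S$ is contained in a single $G$-orbit. Taking the quotient $X \to X/G$ then yields the desired map to the half-translation surface $(X/G, q)$ with $q$ induced by $\omega^2$, and this map collapses $S$ to a single point.

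The first step is to analyze the kernel $K$ of the derivative $d\pi\colon T\cN \to T\cM$ at a generic $(X,\omega,S) \in \cN$. The relative cohomology exact sequence
\[
0 \to \bC^n \to H^1(X, Z(\omega) \cup S; \bC) \to H^1(X, Z(\omega); \bC) \to 0
\]
places $K$ inside $\bC^n$, where the $i$th coordinate records the infinitesimal motion of the marked point $p_i$ while the underlying Abelian differential and its zeros are held fixed. Irreducibility combined with Theorem~\ref{T:periodic} forbids any $p_i \in S$ from being $\cM$-periodic, for a periodic point could be peeled off as its own $1$-point marking to decompose $\cN$ as an irreducible product with a marking on $S \setminus \{p_i\}$. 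Hence every coordinate projection of $K$ is nontrivial, so every marked point truly moves in the fibers of $\pi$.

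The second step is to integrate these infinitesimal motions to a global monodromy action on the fibers of $\pi$. By Filip's algebraicity theorem, the fiber $\pi^{-1}(X,\omega)$ is an algebraic subvariety of $X^n$; combining this with the $GL(2,\bR)$-equivariance of $\cN$ and Wright's cylinder deformation theorem should force the fiber to be a union of orbits of a finite subgroup $G \leq \Aut(X,\omega^2)$. Irreducibility then forces $G$ to act transitively on $S$, for otherwise $S$ would split into distinct $G$-orbits yielding a reducible decomposition of $\cN$. Transitivity of $G$ on $S$ and the quotient construction complete the argument.

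The main obstacle lies in this second step: promoting the infinitesimal cohomological data recorded by $K$ to a genuine finite group of surface automorphisms. A priori the monodromy of $\pi$ acts only through an algebraic correspondence on $X^n$ which need not be induced by automorphisms of $(X,\omega^2)$. The hypothesis that $\cM$ is not entirely branched covers of tori is crucial here: on the torus-cover locus, $\omega$ admits a continuous family of translation symmetries which would obstruct finiteness of the monodromy, whereas away from this locus Theorem~\ref{T:periodic} supplies the rigidity needed to factor the monodromy through $\Aut(X,\omega^2)$. Making this implication precise, and tracking the coordinated but non-identical motion of the various marked points in $S$, will be the technical heart of the argument.
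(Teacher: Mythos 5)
Your plan has a structural flaw before any of the technical details: you aim to produce a finite group $G\leq \Aut(X)$ with $g^*\omega=\pm\omega$ whose action contains $S$ in a single orbit, and to take the half-translation quotient $X/G$. But the theorem only asserts the existence of a covering map to a half-translation surface, and such coverings need not be regular. For instance, if the generic surface of $\cM$ admits a degree-three non-normal half-translation covering $f\colon (X,\omega)\to (Q,q)$ and $\cN$ marks a full fiber of $f$ over a non-branch point, then the points of $S$ are identified by $f$ and locally determine one another (so the marking is irreducible), yet they are not related by any automorphism of $X$; no group $G$ as in your first sentence exists. The paper never constructs a group: it shows that, after enlarging $S$ to a maximal set $S'$ of simultaneously determined points, the relation ``simultaneously marked'' is an equivalence relation on $(X,\omega)\setminus\Sigma'$, and it defines the half-translation surface directly as the quotient by that relation, the local transition maps being translations or translations composed with $-1$. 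So your proposal is aimed at a strictly stronger statement which is false in general.

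Even if one replaces the group by a correspondence, the two steps that carry the actual content are missing from your outline. First, nothing you say controls the local form of the identification of marked points: the paper first proves that any irreducible $2$-point marking has slope $\pm1$ (Theorem \ref{T:Slope1}), by moving one marked point along a minimal-length saddle connection of the surface marked at $\Sigma'$ (zeros together with the finitely many periodic points) and using Lemma \ref{L:boundaryperiodic} to force the second point into $\Sigma'$ as well, contradicting a strict speed inequality; without slope $\pm1$ the quotient carries no half-translation structure, and this is also what bounds the size of the maximal set $S'$ by counting directed horizontal separatrices into $\Sigma'$. Second, you never establish $\dim\cN=\dim\cM+1$: your kernel $K\subset\bC^n$ could a priori be two- or higher-dimensional without contradicting irreducibility, and ruling this out requires the separate induction of Lemma \ref{L:dimplus1} (on $\dim\cN-\dim\cM$ and on genus, using a slope-$1$ pair to produce a translation covering to lower genus and pushing the marking forward). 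Your second step concedes these points only as ``the technical heart'' to be supplied later, so as written the proposal is a sketch of a plan, directed at the wrong target, rather than a proof.
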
 

A translation covering map from a translation surface $(X,\omega)$ to a quadratic differential $(Q,q)$ is defined to be a branched covering $f: X \to Q$ of Riemann surfaces such that $\omega^2 = f^*(q)$. A Riemann surface with a non-zero quadratic differential is also called a half-translation surface.

Theorem \ref{T:main} implies a stronger statement, which we allude to with the terminology ``covering construction". See Remark \ref{R:main}.
 In the case that $\cM$ does consist entirely of branched covers of tori, point markings are easily described, and there are infinitely many  irreducible $n$-point markings for all  $n\geq 1$.

\bold{Strata of quadratic differentials.} For any  connected component of a stratum $\cQ$ of quadratic differentials, one can form the affine invariant submanifold $\tilde{\cQ}$ consisting of all Abelian differentials which arise as double covers (also called square roots) of quadratic differentials in $\cQ$. Each $(X,\omega)\in \tilde{\cQ}$ has a natural involution $J$, so that $(X/J, \omega^2)\in \cQ$. (Since $(X,\omega)$ might, in unusual cases, have more than one involution, the data of $J$ should be included in a point of $\tilde\cQ$, but rather than write $(X,\omega, J)\in \tilde\cQ$ we suppress this from the notation.)

If $\cQ$ consists entirely of hyperelliptic  surfaces we say that $\cQ$ is hyperelliptic. In this case the hyperelliptic involution on $X/J$ lifts to a hyperelliptic involution on $X$.

\begin{thm}\label{T:Q}
Suppose $\cQ$ is not $\cQ(-1^4)$, $\cQ(2,-1^2)$, or $\cQ(2^2)$. 
If $\cQ$ is not hyperelliptic, the only $\tilde{\cQ}$-periodic points are fixed points of the involution $J$. If $\cQ$ is hyperelliptic, the only $\tilde{\cQ}$-periodic points are fixed points for $J$ and fixed points for the hyperelliptic involution. 
\end{thm}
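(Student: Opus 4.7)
The plan is to use the canonical involution $J$ to reduce the classification of $\tilde\cQ$-periodic points to a classification of canonical marked regular points on $\cQ$ itself, and then to invoke such a classification on higher-rank components of quadratic strata.

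Let $p$ be a $\tilde\cQ$-periodic point on some $(X,\omega)\in\tilde\cQ$. Since the involution $J$ is canonically attached to $(X,\omega)$, the point $J(p)$ is also a $\tilde\cQ$-periodic point. The case $p=J(p)$ gives one of the allowed conclusions immediately, so I would focus on the case $p\neq J(p)$ and aim to conclude that $\cQ$ is hyperelliptic and that $p$ is a fixed point of the lifted hyperelliptic involution on $X$. Projecting, set $\bar p=\pi(p)\in X/J$ with $\pi\colon X\to X/J$ the quotient. Since $p$ is regular and not fixed by $J$, the image $\bar p$ can be neither a singularity of $q^2$ of odd order (such a point lifts to a $J$-fixed branch point of $\pi$) nor of even order (such a point lifts to a zero of $\omega$); hence $\bar p$ is a regular point of $q^2$, canonically defined on each $(X/J,q^2)\in\cQ$ by functoriality of the quotient $\tilde\cQ\to\cQ$.

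The main step, and the principal obstacle I anticipate, is to classify such canonical regular marked points on a higher-rank component of a stratum of quadratic differentials. It is worth emphasizing that Theorem \ref{T:main} does not apply directly to the 2-point marking $\{p,J(p)\}$ on $\tilde\cQ$: when $p$ itself is a $\tilde\cQ$-periodic point, $\{p,J(p)\}$ is reducible into the two 1-point markings $\{p\}$ and $\{J(p)\}$, so the hypothesis of Theorem \ref{T:main} fails. The classification therefore has to be established by a separate argument tailored to the quadratic setting, adapting the rigidity behind Theorem \ref{T:periodic} and the half-translation covering conclusion of Theorem \ref{T:main}. The desired conclusion is that any canonical regular marked point on a higher-rank component of a quadratic stratum must be a Weierstrass point of a hyperelliptic involution on $(X/J,q^2)$, which in particular forces $\cQ$ to be hyperelliptic.

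Granted this classification, the rest is short. In the non-hyperelliptic case, it immediately rules out the existence of $p$ with $p\neq J(p)$, so every $\tilde\cQ$-periodic point is $J$-fixed. In the hyperelliptic case, $\bar p$ is a Weierstrass point of the hyperelliptic involution $\tau$ on $X/J$. Because $\tau$ commutes with $J$ and the double cover $\pi\colon X\to X/J$ is canonical, $\tau$ lifts to an involution $\tilde\tau$ on $X$ commuting with $J$. Since $\bar p$ is $\tau$-fixed, its preimages $\pi^{-1}(\bar p)=\{p,J(p)\}$ are fixed by $\tilde\tau$, so $p$ is a fixed point of the hyperelliptic involution on $X$, as claimed.
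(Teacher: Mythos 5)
There is a genuine gap: your argument reduces the theorem to the statement that every ``canonical regular marked point'' (i.e.\ every periodic point downstairs on a higher-rank component of a quadratic stratum) is a Weierstrass point of a hyperelliptic structure, and then defers that classification to ``a separate argument tailored to the quadratic setting'' without supplying it. But that classification \emph{is} the theorem, up to the easy $J$-equivariance bookkeeping you carry out; so the heart of the proof is missing. In the paper this is where all the work lies: one argues by induction on $\dim\cQ$ via cylinder degenerations. Proposition \ref{P:induct} shows that if $(Q,q)$ has a simple cylinder (or a disjoint pair of simple envelopes) whose collapse lands in a component with no periodic points, then $\cQ$ has none either; its proof is a delicate analysis, using the Cylinder Deformation Theorem and the Masur--Zorich classification of hat-homologous configurations (Proposition \ref{P:MZ}), showing that a putative periodic point trapped in the equivalence class $\cC$ of the collapsing cylinder can always be sheared/stretched out of $\cC$, a contradiction. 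Theorem \ref{T:findcyl} produces such a cylinder whose degeneration remains non-hyperelliptic (this requires the two-cylinder argument of Lemma \ref{L:two-cylinders-no-hyp} and a case analysis near $\cQ(-1^4)$), and Theorem \ref{T:basecase} settles the minimal strata $\cQ(3,-1^3)$, $\cQ(5,-1)$, $\cQ(1,-1^5)$ by explicit flat computations (Lemma \ref{calc1}). Finally, the hyperelliptic case is not handled by a direct classification on $\cQ$ itself but by pushing a periodic point down to the non-hyperelliptic genus-zero quotient stratum $\cQ_0$ with marked branch points and invoking Corollary \ref{C:nothypbig} to get a $\cQ_0$-periodic point, contradicting the non-hyperelliptic case already proved. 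None of this machinery, nor any substitute for it, appears in your proposal.

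Your preliminary reductions are fine and consistent with the paper's framing: the orbit of a periodic point under $J$ is again periodic, a non-$J$-fixed periodic point projects to a canonically marked regular point of $q$, and you are right that Theorem \ref{T:main} cannot be applied to $\{p,J(p)\}$ because that marking is reducible when $p$ is itself periodic. The final lifting argument in the hyperelliptic case is also essentially correct (with the small caveat that a lift of $\tau$ could a priori swap the two preimages of $\bar p$, so one must note that the lift which is the hyperelliptic involution of $X$, or its composition with $J$, fixes $p$). But without a proof of the central classification, the proposal does not establish the theorem.
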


In Section \ref{S:background} we will recall the definition of rank, and in Corollary \ref{C:q-rank-one} we will see that $\cQ(-1^4)$, $\cQ(2,-1^2)$, and $\cQ(2^2)$ are exactly the $\cQ$ that have rank 1. 

Theorem \ref{T:Q} is the most difficult of our main theorems, and we expect the combination of Theorems \ref{T:main} and \ref{T:Q} to have the most applications.   

\bold{Context.} Leli\`evre, Monteil, and Weiss recently showed that the only translation surfaces in which every pair of points are finitely blocked are covers of tori \cite{LMW}; we recover and strengthen this in Corollary~\ref{C:LMW}. Generalizing work of  Hubert, Schmoll, and Troubetzkoy \cite{HST} in the case of lattice surfaces, they also showed that for any point $p$ on any translation surface, the set of points not illuminated by $p$ is finite. For more background on the finite blocking problem and the closely related illumination problem, see \cite{Mont1, Mont2, HST, LMW}.  


In the case of closed $GL^+(2, \bR)$ orbits, Theorem \ref{T:periodic} is due to Gutkin, Hubert, and Schmidt \cite{GHS} and was  established independently by M\"oller using algebro-geometric methods \cite{M2}. M\"oller also showed that for non-arithmetic closed orbits   in genus 2 the only periodic points are Weierstrass points. Non-arithmetic closed orbits in genus 2 were classified by McMullen \cite{McM:spin, Mc4}.  

The proof of Theorem \ref{T:main} builds upon and was inspired by an argument of Hubert, Schmoll, and Troubetzkoy \cite[Theorem 5]{HST}.

Lanneau classified connected components of strata of quadratic differentials \cite{Lconn}, see also \cite{CM} for a correction. 

Apisa previously classified periodic points over connected components of strata of Abelian differentials: they exist only for hyperelliptic connected components, in which case they must be Weierstrass points \cite{Apisa}. We make use of one of the main technical lemmas of \cite{Apisa} in Section \ref{S:FindCyl}. As we point out in Remark \ref{R:QvsAb}, the strategy used to prove Theorem \ref{T:Q} does not work for strata of Abelian differentials, so our analysis does not recover Apisa's result. 

There is an unexpected periodic point over the golden eigenform locus in genus 2, see forthcoming work of Eskin-McMullen-Mukamel-Wright and \cite{KM2}. 

\begin{prob}
Compute the periodic points over the (Prym) eigenform loci and the Veech-Ward-Bouw-M\"oller Teichm\"uller curves, as well as the new orbit closures in the forthcoming work of Eskin-McMullen-Mukamel-Wright and \cite{MMW}. 
\end{prob}

When studying translation surfaces without marked points it is often helpful to consider degenerations which may have marked points \cite{MirWri}, and indeed Theorems \ref{T:main} and \ref{T:Q} have already been used to study $GL^+(2, \bR)$ orbit closures of unmarked translation surfaces \cite{MirWri2}.

\bold{Organization.} Section \ref{S:Tmain} proves Theorem \ref{T:main}. Section \ref{S:FB} gives our applications to the finite blocking problem, including Theorem \ref{T:poly}. The remaining sections, which are independent of Section \ref{S:FB}, prove Theorem \ref{T:Q}. Section \ref{S:background} gives the required background, and Section \ref{S:Q-overview} gives a proof, conditional on two results established in the remaining two sections.  The approach is by induction: Section \ref{S:FindCyl} produces an appropriate cylinder to degenerate, and Section \ref{S:BaseCase} provides the base case.

\bold{Acknowledgments.} The authors thank Corentin Boissy, Elise Goujard, Erwan Lanneau, Maryam Mirzakhani,  Barak Weiss, Christopher Zhang, and Anton Zorich for helpful conversations, and Ronen Mukamel for significant contributions to this paper regarding Theorem \ref{T:main}. The authors are also grateful to the referees for helpful comments. This research was partially conducted during the period AW  served as a Clay Research Fellow. This material is based upon work supported by the National Science Foundation Graduate Research Fellowship Program under Grant No. DGE-1144082. This material is also based upon work supported by the National Science Foundation under Award No. 1803625. PA gratefully acknowledges their support.

\section{Proof of Theorem \ref{T:main}}\label{S:Tmain}


We begin with some preliminary observations. 

\begin{lem}\label{L:FiberDim}
Let $\cN$ be a point marking over $\cM$, and let $(X, \omega, S)\in \cN$. For each $s\in S$, fix a path $\gamma_s$ from a zero of $\omega$ to $s$. Then each branch of the fiber of $\cN$ over $(X,\omega)$ is locally defined by a collection of linear equations of the form 
$$\sum_{s\in S} a_s \int_{\gamma_s}\omega = \int_\gamma \omega,$$
where $a_s\in \bR$, and $\gamma\in H_1(X, \Sigma, \bR)$, and $\Sigma$ is the set of zeros of $\omega$. 

Moreover, these equations can be chosen to hold locally on $\cN$, and each fiber of $\cN$ over $\cM$ is either empty or has dimension $\dim \cN-\dim \cM$. 
\end{lem}

 Note that $\int_\gamma \omega$ is locally constant on the fiber of $\cN$ over $(X,\omega)$, so in the first part of the statement we could have replaced it with a constant.

\begin{proof}
Like all $GL^+(2, \bR)$ orbit closures, $\cN$ is locally defined by linear equations in local period coordinates. A basis for local period coordinates can be obtained from the $\gamma_s$ together with a basis of $H_1(X, \Sigma)$.   

Since the equations hold locally on $\cN$, the dimension of $\cN$ must be equal to the dimension of $\cM$ plus the dimension of a fiber; the dimension of the fiber is $|S|$ minus the dimension of the span of the coefficient vectors $(a_s)_{s\in S}$ arising from the linear equations locally defining the fiber. 
\end{proof}

 Note that the image of $\cN$ in $\cM$ is $GL^+(2,\bR)$ invariant and contains an open subset of $\cM$, so by \cite{EM, EMM} this image contains the complement of a union of smaller dimensional affine invariant sub-manifolds.  However, as we now show, some fibers can indeed be empty.

\begin{ex}
Let $\cM\subset \cH(5,1)$ be the locus of double covers of surfaces in $\cH(2)$ that are branched over the zero and one other point $p$. Let $\cN$ be the 2-point marking over $\cM$ that marks the two pre-images of the image of $p$ under the hyperelliptic involution. Then  the image of $\cN$ in $\cM$ is the locus of covers where $p$ is not a Weierstrass point. 
\end{ex}


We now make one final preliminary remark before moving on to the main definition in this section. 
Let $\cN$ be an irreducible 2-point marking over $\cM$, and let $(X, \omega, \{p_1, p_2\}) \in \cN$.

If $\dim \cN=\dim \cM$, then $\cN$ would be the fiberwise union of two periodic points, contradicting the fact that $\cN$ is irreducible. On the other hand, if $\dim \cN=\dim \cM+2$, then $\cN$ would be the fiberwise union of two  1-point markings in which the single point is unconstrained, again contradicting the fact that $\cN$ is irreducible. Hence, we get that $\dim \cN=\dim \cM+1$. 

Let $\gamma_i$ be a path from a zero of $\omega$ to $p_i$. By Lemma \ref{L:FiberDim}, the fiber of $\cN$ over $(X,\omega)$ is locally defined by a single equation of the form 
$$a_1\int_{\gamma_1}\omega + a_2\int_{\gamma_1} \omega   = \int_\gamma \omega,$$
where $\gamma\in H_1(X, \Sigma, \bR)$ and $a_1, a_2\in \bR$. If $a_1$ was zero, then $\cN$ would be the fiberwise union of 1-point marking containing $(X,\omega, \{p_1\})$, in which the single point is unconstrained, and a periodic point containing $(X,\omega, \{p_2\})$, yet again contradicting the fact that $\cN$ is irreducible. So we conclude that both $a_i$ are non-zero.

\begin{defn}\label{D:slope}
Let $\cN$ be an irreducible 2-point marking over $\cM$. As above, consider any $(X,\omega, \{p_1,p_2\})\in \cN$, and rewrite the linear equation locally  defining the fiber in the form 
$$\int_{\gamma_1} \omega = a\int_{\gamma_2}   \omega + \int_\gamma \omega,$$
with $a\neq 0$.
We define the slope of $\cN$ to be $a$ or $1/a$, whichever is larger in absolute value. 
\end{defn}

The slope describes the speed at which one marked point moves when the other marked point is moved at unit speed  (and the underlying  surface $(X,\omega)$ without marked points is fixed). Note that if the role of $p_1$ and $p_2$ are interchanged, $1/a$ will play the role of $a$. 

The slope depends only on $\cN$ and not on any of the choices made in the definition. Indeed, over $\cN$ there is a bundle whose fiber is first cohomology rel the singular points and the marked points. There is a rank two trivial subbundle spanned by the  deformations that move the two marked points. (We are assuming the two marked points are labeled; if not, one can pass to a two-fold cover where they are labeled.) We can intersect $T(\cN)$ with this rank two trivial subbundle to get a trivial subbundle, and $a$ is the slope of that subbundle in each fiber. 

\begin{rem}\label{R:MorePoints}
Let $\cN$ be an irreducible $n$-point marking over $\cM$ of dimension $\dim \cM+1$. Then if $(X, \omega, S) \in \cN$ and $p_1,p_2 \in S$, one can define the slope of $p_1$ with respect to $p_2$ in the same way. If $(X, \omega, S)$ is generic, this will be equal to the slope of the irreducible 2-point marking given by the orbit closure of $(X, \omega, \{p_1,p_2\})$. 
\end{rem}

\begin{ex}
If $\cM$ is the hyperelliptic locus in some stratum, and $\cN$ is given by a pair of points that are exchanged under the hyperelliptic involution, then the slope is -1. 
\end{ex}

\begin{ex}
Suppose that $\cM$ is an affine invariant submanifold of translation surfaces of genus $g$, such that the generic translation surface in $\cM$ has a unique  translation covering  map to a translation surface of genus $h<g$. Let $\cN$ be the set of all pairs of points on surfaces in $\cM$ that map to the same point on the associated surface of genus $h$. Then $\cN$ has slope 1. 
\end{ex}

\begin{ex}
Suppose that $\cM$ is an arithmetic Teichm\"uller curve, so each surface $(X,\omega)\in \cM$ admits a unique translation covering map $f$ of degree $d$ to a torus branched over one point. Let $\cN$ be  the locus of all pairs of points $p_1,p_2$ on surfaces $(X,\omega)\in \cM$ such that $2f(p_1)= 7f(p_2)$, where we view the image torus as a group with origin equal to the image of the zeros of $\omega$. Then $\cN$ has slope $7/2$. 
\end{ex}

\begin{thm}\label{T:Slope1}
Suppose that $\cM$ has only finitely many $\cM$-periodic points.  Then any irreducible $2$-point marking $\cN$ over $\cM$ has slope $1$ or $-1$. 
\end{thm}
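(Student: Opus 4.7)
My plan is to argue the contrapositive: assuming the slope $a$ satisfies $a \notin \{1, -1\}$, I will produce infinitely many $\cM$-periodic points, contradicting the hypothesis. By interchanging the roles of $p_1$ and $p_2$ (and replacing $a$ by $1/a$) if necessary, I may assume $|a| > 1$.

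Fix a zero $z_0$ of $\omega$ and let $I\colon \widetilde X \to \bC$ denote the developing map $p \mapsto \int_{z_0}^p \omega$. The defining equation of $\cN$ then rewrites as
$$I(p_1) \;=\; a \, I(p_2) + c, \qquad c := \int_\gamma \omega.$$
Because $\gamma \in H_1(X, \Sigma, \bR)$ does not involve the marked points, $c$ is a period of $\omega$ relative to $\Sigma$, and thus a linear function on $\cM$ in period coordinates.

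The central construction is as follows. Since $|a| > 1$, the affine map $F_c(z) = (z - c)/a$ is a contraction on $\bC$ with unique fixed point $z^* = c/(1-a)$. Consequently, the (multi-valued) correspondence $p_1 \mapsto p_2$ on $(X, \omega)$ provided by the slope relation has iterates that converge to the point $p^*$ characterized by $I(p^*) = c/(1-a)$. Because this right-hand side is a linear function of the periods of $\omega$, the locus $\cM^* := \{(X, \omega, p^*) : (X, \omega) \in \cM\}$ is cut out in $\cH^{*1}$ by an $\bR$-linear equation added to those defining $\cM$, hence is a $GL(2, \bR)$-invariant affine submanifold with $\dim \cM^* = \dim \cM$; that is, $p^*$ is an $\cM$-periodic point.

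The main obstacle is to upgrade the production of a single periodic point to infinitely many. I would approach this by exploiting the multiplicity of branches of the fiber of $\cN \to \cM$. At a generic $(X, \omega, p_1)$, the slope relation admits several candidate solutions $p_2$, corresponding to disjoint components of the fiber of $\cN$, each with its own locally constant homology class $\gamma$, its own constant $c_j$, and hence its own fixed point $p^*_j$. Iterating the correspondence while varying the branch chosen at each step produces, for every eventually periodic combinatorial pattern, a distinct $\cM$-periodic point (the limit of a suitable geometric series in the $c_j$'s, which remains a linear function of periods). A careful combinatorial analysis in the spirit of Hubert--Schmoll--Troubetzkoy \cite[Theorem~5]{HST}, coupled with the $\bR$-linear rigidity of the trace field of $\cM$ that rules out accidental coincidences, should then yield infinitely many distinct $\cM$-periodic points, contradicting the hypothesis.
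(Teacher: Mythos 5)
Your construction of a single fixed point does not by itself contradict anything: the hypothesis of Theorem \ref{T:Slope1} is that $\cM$ has \emph{finitely many} periodic points, not none, so one new marking of dimension $\dim\cM$ is perfectly consistent with it. The entire weight of your argument therefore rests on the last step, which you explicitly leave as a sketch (``should then yield infinitely many''), and that step has unaddressed obstructions. First, the constant $c=\int_\gamma\omega$ is only defined up to the choices of $\gamma_1,\gamma_2,\gamma$ and changes by combinations of periods as you continue along the fiber of $\cN$, so ``the'' fixed point $p^*$ depends on the branch; you must show that the locus $\{I(p^*)=c/(1-a)\}$ is a genuine $1$-point marking over $\cM$ (closed, $GL(2,\bR)$-invariant, projecting densely to $\cM$) and, crucially, that the marked point is not identically a zero of $\omega$ — if $\tfrac{1}{1-a}\gamma$ agrees, modulo the defining equations of $\cM$, with a zero-to-zero relative class, your $p^*$ is a singularity and produces no periodic point at all. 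Second, iterating the correspondence presumes that the fiber of $\cN$ over a fixed $(X,\omega)$ is a finite union of composable ``lines'' and, more seriously, that the fixed points $(c_{j_1}+ac_{j_2}+\cdots)/(1-a^n)$ of the various composites are pairwise distinct (modulo periods) and avoid the zeros; the appeal to ``$\bR$-linear rigidity of the trace field'' is an invocation, not an argument, and ruling out these coincidences is precisely the hard content you would need to supply. As written, the proof does not close.

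For comparison, the paper uses the finiteness hypothesis in the opposite direction and avoids all of this. On a surface $(X,\omega)$ with dense orbit, let $\Sigma'$ be the finite set consisting of the zeros together with the points lying in $\cM$-periodic points. Lemma \ref{L:boundaryperiodic} shows that in the fiber of $\cN$ over $(X,\omega)$, whenever $p_1$ is pushed into $\Sigma'$, the companion point $p_2$ must land in $\Sigma'$ as well. If the slope is not $\pm1$ then (after swapping $p_1$ and $p_2$) $p_2$ moves strictly slower than $p_1$; pushing $p_1$ along a shortest saddle connection of $(X,\omega,\Sigma')$ from one endpoint to the other forces $p_2$ to start and end in $\Sigma'$ while travelling a positive distance strictly less than the minimal distance between points of $\Sigma'$, a contradiction. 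If you wish to salvage your fixed-point approach you must carry out the missing distinctness and non-degeneracy analysis for the iterated correspondence; otherwise, the boundary lemma plus the shortest-saddle-connection argument gives the theorem in a few lines.
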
 

%
%
%



%
%
%
%
%
%
%

The proof is a generalization of  \cite[Proof of Theorem 5]{HST}. We will say that a point $p$ on a surface $(X,\omega)$ is a periodic point if it is contained in a $\cM$-periodic point, where $\cM$ is the orbit closure of $(X,\omega)$.

\begin{lem}\label{L:boundaryperiodic}
Let $\cN$ be an  irreducible $2$-point marking over an affine invariant submanifold $\cM$. Suppose that $(X,\omega)$ has dense orbit in $\cM$. Then if $(X,\omega, \{p_1,p_2\})\in \cN$ and $p_1$ is $\cM$-periodic, then  $p_2$ is also $\cM$-periodic.

Furthermore, if $(X,\omega,\{p_1,p_2\})$ is  in the closure of the fiber of $\cN$ over $(X,\omega)$ and  $p_1$ is  a zero of $\omega$, then $p_2$ is either a zero of $\omega$ or an $\cM$-periodic point.

Similarly, if $(X,\omega,\{p_1,p_2\})$ is  in the closure of the fiber of $\cN$ over $(X,\omega)$ and $p_1=p_2$, then $p_1$ is a $\cM$-periodic point or a zero.
\end{lem}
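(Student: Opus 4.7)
The plan is as follows. For the first statement, I would consider the orbit closure $\cM_1\subset\cH^{*1}$ of $(X,\omega,p_1)$; since $p_1$ is $\cM$-periodic, $\dim\cM_1=\dim\cM$, and the $p_1$-coordinate of every point of $\cN$ is constrained to lie in a finite set of $\cM$-periodic points on the underlying surface. The key observation is that the forgetful map $\pi_2:\cN\to\cH^{*1}$ erasing $p_1$ therefore has finite generic fibers, so its image $\cN_2$ is an affine invariant submanifold of $\cH^{*1}$ with $\dim\cN_2=\dim\cN$, forming a 1-point marking of $\cM$. Since $\cN_2\subseteq\cM^{*1}$ surjects onto a dense subset of $\cM$, we have $\dim\cN_2\in\{\dim\cM,\dim\cM+2\}$: the first case yields the desired conclusion that $p_2$ is $\cM$-periodic, while the second forces $\cN_2=\cM^{*1}$, making $\cN$ a component of the pair marking built from $\cM_1$ and $\cM^{*1}$ and thus contradicting the irreducibility of $\cN$.

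For the second statement, suppose $p_2\notin\Sigma$. I would pick a sequence $(p_1^{(n)},p_2^{(n)})$ in the fiber of $\cN$ over $(X,\omega)$ with $(p_1^{(n)},p_2^{(n)})\to(z_1,p_2)$ and apply $GL(2,\bR)$-equivariance: translating each term by $g\in GL(2,\bR)$ and passing to the limit places the orbit $GL(2,\bR)\cdot(X,\omega,\{z_1,p_2\})$ in the closure of $\cN$ inside the ambient configuration space permitting collisions of marked points with zeros. Since $(X,\omega)$ has dense orbit in $\cM$, the closure of this $GL(2,\bR)$-orbit projects onto all of $\cM$. Forgetting the first coordinate, which is a zero, then produces an affine invariant submanifold $\cN'\subset\cH^{*1}$ containing $(X,\omega,p_2)$ that constitutes a 1-point marking of $\cM$.

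The remaining task is to show $\dim\cN'=\dim\cM$. Irreducibility of $\cN$, combined with the first statement, forces $\dim\cN=\dim\cM+2$, so the fiber of $\cN$ over $(X,\omega)$ is a complex-analytic curve in $X\times X$ cut out by a single $\bC$-linear equation relating the positions of $p_1$ and $p_2$. Its closure in $X\times X$ contains only finitely many boundary points with first coordinate equal to the fixed zero $z_1$. Hence, as $(Y,\eta)$ varies in $\cM$, the limit value $p_Y$ depends on $(Y,\eta)$ in a finite-to-one fashion, giving $\dim\cN'=\dim\cM$ and identifying $p_2$ as an $\cM$-periodic point.

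The hard part will be the boundary-extension step in the second statement: verifying that the period-coordinate description of $\cN$ extends continuously to the degenerate stratum where a marked point collides with a zero, so that the limit $p_2$ is a finite-valued function of $(Y,\eta)\in\cM$ and the choice of zero $z_1$. Once that is in place, the irreducibility-versus-dimension dichotomy above settles both claims in parallel fashion.
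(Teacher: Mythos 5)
Your first-statement argument has a genuine flaw at its core. The claim that ``the $p_1$-coordinate of every point of $\cN$ is constrained to lie in a finite set of $\cM$-periodic points'' is false: the hypothesis is only that the single marked surface $(X,\omega,\{p_1,p_2\})$ has $p_1$ periodic, while at the \emph{generic} point of the irreducible marking $\cN$ neither marked point is $\cM$-periodic. Consequently the finite-fiber property of $\pi_2$ on all of $\cN$ has nothing to do with periodicity of $p_1$ (it follows from irreducibility/the slope relation), the image $\cN_2$ always has dimension $\dim\cN$, and your dichotomy always lands in the ``free'' case; but the conclusion you draw there is a non sequitur, since $\cN_2=\cM^{*1}$ does not make $\cN$ a component of the pair marking built from $\cM_1$ and $\cM^{*1}$ --- $\cN$ is not even contained in that pair marking, precisely because its generic point has non-periodic $p_1$. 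A sanity check that something is wrong: as written your argument never uses the hypothesis on $p_1$ in a valid way, so if it worked it would show that no irreducible $2$-point marking exists at all, contradicting, e.g., the hyperelliptic pair example. The repair is to run your dimension count on the orbit closure $\cN'$ of the given point $(X,\omega,\{p_1,p_2\})$ rather than on $\cN$: there the $p_1$-coordinate really is constrained, $\cN'$ is properly contained in $\cN$ (again because generic points of $\cN$ are not periodic), and density of the $GL(2,\bR)$-orbit of $(X,\omega)$ in $\cM$ forces $\dim\cN'=\dim\cM$, so both points are periodic. This is exactly the paper's (much shorter) argument.

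For the second statement your outline --- pass to the space allowing a marked point to collide with a zero, forget the collided point, and show the resulting $1$-point marking has finite fibers over $\cM$ --- is in the spirit of the paper's ``general principles'' argument, but the step you defer as ``the hard part'' is the entire content. Saying the fiber of $\cN$ over $(X,\omega)$ is ``cut out by a single $\bC$-linear equation'' is only a local description: a priori the fiber is an immersed union of slope-$a$ linear leaves, possibly infinitely many, which could accumulate on the collision locus, in which case the set of limit points with first coordinate $z_1$ need not be finite and the dimension drop $\dim\cN'=\dim\cM$ does not follow. The paper closes exactly this gap either by invoking that the boundary of $\cN$ (in the sense of Mirzakhani--Wright or Filip) is an affine invariant submanifold of strictly smaller dimension than $\cN$, or by the concrete observations that slope-$a$ leaves cannot accumulate on the infinite-slope locus $\{(X,\omega,p_1,p_2'): p_2' \text{ arbitrary}\}$, or that finiteness of the volume of $\cN$ forces the fiber to be a finite union of leaves. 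Without one of these inputs your proof of the ``furthermore'' part is incomplete.
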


As in Remark \ref{R:MorePoints}, Lemma \ref{L:boundaryperiodic} can also be applied to irreducible $n$-point markings $\cN$ with $\dim \cN =\dim \cM+1$, by forgetting all but two points.

\begin{proof} 
Let $\cN'$ be the orbit closure of $(X,\omega, \{p_1,p_2\})$, where $p_1$ is $\cM$-periodic. This affine invariant submanifold must be properly contained in $\cN$, because at the generic point of $\cN$ neither of the marked points are periodic. 

By the discussion before Definition \ref{D:slope},  $\dim \cN = \dim \cM +1$. Since $\cN' \subsetneq \cN$, we must have $\dim \cN'=\dim \cM$, and hence both $p_1$ and $p_2$ are $\cM$-periodic. 

Now, suppose to a contradiction that $p_1$ is a zero of $\omega$ and $p_2$ is neither a zero nor an $\cM$-periodic point. Then the $GL^+(2,\bR)$ orbit closure of $(X,\omega, p_1, p_2)$ has dimension $\dim \cM+1 = \dim \cN$ and contains the set of $(X,\omega, p_1, p_2')$ where $p_2'$ is arbitrary. 

We can now use either general principles or concrete arguments. The general principle is that the  orbit closure of $(X,\omega, p_1, p_2)$ is contained in the boundary of $\cN$ (in the Hodge bundle over $\cM_{g,2}$), and by \cite{MirWri} or \cite{Fi1} this boundary must have dimension strictly smaller than $\dim \cN$. One concrete argument, which we only sketch, is that the fiber of $\cN$ over $(X,\omega)$ consists of linear submanifolds of  $(X,\omega)\times (X,\omega)$ of slope $a$, and such submanifolds cannot accumulate on the set of  $(X,\omega, p_1, p_2')$, which has infinite slope. 

The proof of the final claim is almost identical. 
\end{proof}

\begin{proof}[Proof of Theorem \ref{T:Slope1}.]
Suppose otherwise.
Consider  $(X,\omega, \{p_1,p_2\})\in \cN$ such that the $GL^+(2, \bR)$ orbit of $(X,\omega)$ is dense in $\cM$. Let $\Sigma'$ be the finite set of all zeros of $\omega$ together with all points of $(X,\omega)$ contained in $\M$-periodic points. By assumption $\Sigma'$ is finite.

We consider the fiber of $\cN$ over $(X,\omega)$; by the discussion in Definition \ref{D:slope}, it must be one dimensional.  Any path that pushes the point $p_1$ around $(X, \omega)$ while avoiding the set $\Sigma'$ can be lifted to a path in the fiber that pushes $p_1$ and $p_2$ around $(X, \omega)$.  By Lemma \ref{L:boundaryperiodic}, along any such path where $p_1$ avoids $\Sigma'$, the point $p_2$ does not collide with either $p_1$ or $\Sigma'$. 

As we move $p_1$ on $(X,\omega)$, $p_2$ moves in a  way determined by the slope so that the marked surface remains in the fiber. After possibly swapping the role of $p_1$ and $p_2$, we may assume that when $p_1$ moves with unit speed the point $p_2$ moves with speed strictly less than unit speed. 

By Lemma \ref{L:boundaryperiodic}, when $p_1$ is moved to a point of $\Sigma'$, it must be the case that $p_2$ is also moved to a point of $\Sigma'$.
Let $\alpha$ be a saddle connection on $(X,\omega, \Sigma')$ (that is, a straight line segment from a point of $\Sigma'$ to a point of $\Sigma'$ whose interior is disjoint from $\Sigma'$) of minimal length. Move $p_1$ to be on $\alpha$, and then move $p_1$ from one end of $\alpha$ to the other. At either end, $p_1$ is at a point of $\Sigma'$, so $p_2$ must also be at a point of $\Sigma'$. This is a contradiction because $p_1$ moves strictly faster than $p_2$ and because $\alpha$ is a minimal length saddle connection.  
\end{proof}

\begin{lem}\label{L:producecover}
Suppose that $\cM$ has only finitely many $\cM$-periodic points.
Let $\cN$ be an irreducible $n$-point marking over $\cM$ with $\dim \cN=\dim \cM+1$. For every $(X,\omega, S)\in \cN$ there is a translation covering map $f$ to a half-translation surface such that $f$ maps $S$ to a point. 

If the slope of $\cN$ is 1, then $f$ can be chosen to map to a translation (rather than half-translation) surface.
\end{lem}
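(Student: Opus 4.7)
The plan is to apply Theorem~\ref{T:Slope1} pairwise among the marked points to produce global self-maps of $X$ respecting $\omega$ up to sign, and then take a quotient to build the half-translation surface.

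First, I would reduce to 2-point markings. Fix $(X, \omega, S) \in \cN$ with $S = \{p_1, \ldots, p_n\}$, and for each $i > 1$ let $\cN_i$ denote the orbit closure of $(X, \omega, \{p_1, p_i\})$ in $\cH^{*2}$. The $1$-dimensional fiber of $\cN$ over $(X, \omega)$ projects finitely-to-one onto the corresponding fiber of $\cN_i$, because in any such fiber each $p_j$ moves linearly in the parameter, so knowing $p_i$ recovers $p_1$ and hence every other $p_j$; this gives $\dim \cN_i = \dim \cM + 1$. The marking $\cN_i$ is irreducible, since reducibility would force both $p_1$ and $p_i$ to be $\cM$-periodic, contradicting the dimension count together with the finite-periodic-point hypothesis. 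Theorem~\ref{T:Slope1} then yields a slope $\epsilon_i \in \{+1, -1\}$, so in flat period coordinates the relation $p_i = \epsilon_i p_1 + c_i$ holds, with $c_i \in \bC$ a function only of the absolute periods.

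Second, I would upgrade each local relation to a global self-map. Consider the closure $C_i \subset X \times X$ of the locus $\{(p, p') : (X, \omega, \{p, p'\}) \in \cN_i\}$. By Lemma~\ref{L:boundaryperiodic}, any degenerate fibers lie over the finite set of zeros of $\omega$ together with the $\cM$-periodic points, so $C_i$ is a closed complex-analytic curve. Since $C_i$ is locally cut out by $p' = \epsilon_i p + c_i$ in flat coordinates, both projections to $X$ are local biholomorphisms, and the local graph carries no monodromy; consequently the connected component of $C_i$ through $(p_1, p_i)$ is the graph of a global holomorphic self-map $\phi_i \colon X \to X$ with $\phi_i^* \omega = \epsilon_i \omega$ and $\phi_i(p_1) = p_i$. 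When $\epsilon_i = +1$ this $\phi_i$ is a finite-order translation automorphism of $(X, \omega)$; when $\epsilon_i = -1$ it is an involution with $\phi_i^* \omega = -\omega$.

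Third, I would take the quotient. Let $G \leq \Aut(X)$ be the subgroup generated by $\{\phi_i : i > 1\}$. Each $\phi_i$ preserves $\omega^2$, so $G \subseteq \Aut(X, \omega^2)$. The hypothesis that $\cM$ has only finitely many $\cM$-periodic points combined with Theorem~\ref{T:periodic} rules out the torus case, so $X$ has genus at least two and $\Aut(X)$ is finite; hence $G$ is finite. The quotient $Y = X/G$ inherits a quadratic differential $q$ descending from $\omega^2$, so $(Y, q)$ is a half-translation surface, and the projection $f \colon (X, \omega) \to (Y, q)$ satisfies $f^* q = \omega^2$. Since $\{\phi_i(p_1) : i \geq 1\} = S$ lies in a single $G$-orbit, $f(S)$ is a single point.

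The main obstacle is the second step: promoting the linear constraint on marked-point period coordinates to a genuine holomorphic self-map of $X$. This generalizes the argument in \cite[Proof of Theorem 5]{HST} and is the central technical input.
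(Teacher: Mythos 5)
There is a genuine gap, and it is exactly at the point you flagged as the ``central technical input'': the claim in your second step that the slope $\pm 1$ locus $C_i\subset X\times X$ has no monodromy and is therefore the graph of a global holomorphic self-map $\phi_i$ of $X$ with $\phi_i^*\omega=\pm\omega$. This is false in general. The fiber of $\cN_i$ over $(X,\omega)$ is only a correspondence: transporting $p$ around a loop in $X$ avoiding the zeros and periodic points moves the partner point $p'$ by the local rule $p'=\epsilon_i p+c_i$, but when $p$ returns to its starting point $p'$ may return to a \emph{different} point, so the component of $C_i$ through $(p_1,p_i)$ can project to $X$ with degree greater than one. The paper's own example of slope $1$ markings shows this concretely: if the generic surface in $\cM$ admits a (not necessarily normal) translation covering $\pi\colon X\to Y$ of lower genus, the locus of pairs with $\pi(p)=\pi(p')$ is an irreducible slope $1$ two-point marking, yet for a non-Galois cover there is no automorphism of $X$ carrying $p_1$ to $p_i$. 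If your argument were correct it would prove that every such marking comes from automorphisms of $(X,\omega^2)$, which is strictly stronger than the lemma and wrong; the correct conclusion is only a map $f$ from $X$ to some half-translation surface identifying the points of $S$, and that quotient need not be by a group of automorphisms of $X$.

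The paper's proof is built precisely to avoid this issue. Instead of trying to produce self-maps, it saturates the marking: it considers irreducible $n'$-point markings of dimension $\dim\cM+1$ extending $\cN$, bounds $n'$ by the total cone angle at zeros and periodic points (using Lemma~\ref{L:boundaryperiodic} and Theorem~\ref{T:Slope1} to see that at a degeneration each marked point must approach $\Sigma'$ along a distinct directed horizontal separatrix), takes $n'$ maximal, and shows maximality forces ``being simultaneously marked'' to be an equivalence relation on $X\setminus\Sigma'$. The half-translation surface is then defined as the quotient of $X\setminus\Sigma'$ by this equivalence relation (the slope $\pm1$ condition makes the transition functions $z\mapsto \pm z+c$), extended to the metric completion. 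Your first step (reduction to pairwise slopes via Theorem~\ref{T:Slope1}) and the idea of quotienting are in the right spirit, but the missing ingredient is this saturation/equivalence-relation argument, which replaces the unjustified ``no monodromy'' claim; without it the construction of $f$ does not go through.
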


\begin{proof}
We first prove the main claim.

It suffices to prove the lemma when $(X,\omega, S)$ has dense orbit in $\cN$, so we assume this. The reason it is sufficient is because if the conclusion holds on a dense subset of $\cN$, then it holds on all of $\cN$, and a dense subset of $\cN$ has dense orbit. 

Suppose, for some $n'>n$, that there is an irreducible $n'$-point marking $\cN'$ over $\cM$ of dimension $\dim \cN=\dim \cM +1$, such that for all $(Y, \eta, P')$ in $ \cN'$ minus a union of smaller affine invariant submanifolds  there is a set $P \subset P'$ so that $(Y, \eta, P)\in \cN$. 

In other words, in $\cN$ there are $n$ marked points whose position locally determine each other given the unmarked translation surface, and $\cN'$ extends these $n$ points to a larger collection of points such that each locally determines all the others. 

Let $\Sigma'$ be the set of zeros of $\omega$ and all points $s$ such that $(X,\omega, s)$ is contained in an $\cM$-periodic point. 

\begin{sublem}\label{SL:N'Bound}
Let $T$ be the sum of the cone angles at points of $\Sigma'$, divided by $\pi$. Then $n' \leq T$.
\end{sublem}
\begin{proof}
Consider $(X,\omega, S')\in \cN'$, and move the $n'$-marked points around while remaining in the fiber of $\cN'$ over $(X,\omega)$. Move one of the marked points along a horizontal separatrix until it hits a point of $\Sigma'$. By Lemma \ref{L:boundaryperiodic}, all marked points must then lie at points of $\Sigma'$.   

By Theorem \ref{T:Slope1}, the slope for any pair of these points is $1$ or $-1$, so each marked point must have traveled along a different directed horizontal line segment towards a point of $\Sigma'$. There are exactly $T$ such directed horizontal line segments. The claim is proved. 
\end{proof}

Now assume $n'$ as above was maximal, which is possible by Sublemma \ref{SL:N'Bound}. Let $\cN'$ be a corresponding $n'$-point marking.

\begin{sublem}\label{SL:EC}
Suppose that $p$ and $q$ are two points in $(X, \omega) - \Sigma'$, possibly equal. Say that $p \sim q$ if there is a collection $S$ of $n'$ points containing $p$ and $q$ and such that $(X, \omega; S) \in \cN'$. Then $\sim$ is an equivalence relation on $(X, \omega) - \Sigma'$.
\end{sublem}


\begin{proof}[Proof of Sublemma \ref{SL:EC}:]
Recall that Lemma \ref{L:boundaryperiodic} gives that if $(X, \omega; S) \in \cN'$ and one point of $S$ is in $\Sigma'$, then all points of $S$ are in $\Sigma'$. 

We begin with a subtle point, which is that $p\sim p$ for all $p\in (X, \omega) - \Sigma'$, for which we need to prove that for all $p$, we can find $S$ containing $p$ so that $(X, \omega; S) \in \cN'$. As discussed previously, in the fiber over $(X,\omega)$, the position of one of the $n'$ points locally determines the position of the other $n'-1$ points. Lemma \ref{L:boundaryperiodic} implies that as we move one point in $(X,\omega)\setminus \Sigma'$, the other points stay in $(X,\omega)\setminus \Sigma'$ and that none of the $n'$ points collide with each other. So, we can see that $p\sim p$ by starting with any $(X, \omega; S) \in \cN'$, and moving $S$ so that it contains $p$. 

If $\sim$ is not an equivalence relation, then there are two sets of $n'$-points, both in $\cN'$, that partially overlap. Let $S''$ denote their union, and let $\cN''$ denote the orbit closure of $(X,\omega,S'')$. Since we have assumed $(X,\omega)$ has dense orbit, $\cN''$ is a point marking over $\cM$. 

We now show that $\dim \cN'' = \dim \cM+1$ and that $\cN''$ is irreducible. Indeed, $\cN''$ is a component of 
$$\{(X,\omega, S\cup S'): (X,\omega, S)\in \cN', (X,\omega, S')\in \cN', |S \cup S'|=|S''|\}.$$
In fibers of  $\cN'$, the position of each point locally determines the position of all the others.
Since $S \cap S' \neq \emptyset$, we get the same statement for fibers of $\cN''$, proving $\dim \cN'' = \dim \cM+1$ and that $\cN''$ is irreducible. This contradicts the maximality of $n'$.
\end{proof}

Let $Q_0$ denote the quotient of $(X,\omega)\setminus \Sigma'$ by the equivalence relation $\sim$. We claim that the map $(X,\omega)\setminus \Sigma'\to Q_0$ is a covering map. Indeed, suppose that $(X,\omega, \{z_1,\cdots, z_{n'}\})\in \cN'$,  and that none of the $z_i$ are in $\Sigma'$.  For some $\e>0$, the $\e$ balls about the $z_{i}$ are embedded, disjoint, and disjoint from $\Sigma'$. Since each marked point moves at speed $1$ or $-1$ with respect to each other marked point, each point in the ball $B_\e(z_i)$ is equivalent to a point in $B_\e(z_j)$ for each $j\neq i$. The quotient by the equivalence relation identified all these balls to a single ball. This shows that the map is a covering map, and hence that the quotient is a surface.

Furthermore, the composition of the map from $B_\e(z_i)$ to $Q_0$ with the inverse of the map from $B_\e(z_j)$ to $Q_0$ is given in local coordinates by multiplication by $1$ or $-1$. Hence the quotient has an atlas of charts to $\bC$ whose transition functions are translations and translations composed with multiplication by $-1$. The quotient map is a local isometry, so the quotient is a punctured surface. 

The quotient map extends continuously to a map $f$ from $(X,\omega)$ to the metric completion of the punctured surface $Q_0$, and this $f$ is the desired map.  

If the slope of $\cN$ is 1, a slight variant of the above gives the final claim, modifying the proof by picking $\cN'$ to be a maximal point marking containing $\cN$ subject to the condition that any pair of points in $\cN'$ has slope 1.
\end{proof}

\begin{lem}\label{L:dimplus1}
Suppose that $\cM$ has only finitely many $\cM$-periodic points. Let $\cN$ be an irreducible $n$-point marking with $n>2$. Then $\dim \cN=\dim \cM+1$. 
\end{lem}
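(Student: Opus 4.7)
The plan is to assume for contradiction that $\dim \cN \geq \dim \cM + 2$ for an irreducible $n$-point marking $\cN$ with $n > 2$, and to derive a contradiction by induction on $n$, combining Theorem \ref{T:Slope1} and Lemma \ref{L:producecover} with a boundary degeneration analysis.

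First, regardless of $n$, I would reduce to the situation in which every pair of marked points in $\cN$ is free, i.e., $\dim \cN_{ij} = \dim \cM + 2$ for every pair $(i,j)$, where $\cN_{ij}$ denotes the orbit closure of $(X,\omega,\{p_i,p_j\})$. If instead some pair $(p_i, p_j)$ satisfies $\dim \cN_{ij} = \dim \cM + 1$, then Theorem \ref{T:Slope1} forces its slope to be $\pm 1$ and Lemma \ref{L:producecover} produces a half-translation cover $f$ with $f(p_i) = f(p_j)$, so that $p_j$ is locally determined by $p_i$. Projecting out $p_j$ yields an $(n-1)$-point marking $\cN'$ with $\dim \cN' = \dim \cN$; either the inductive hypothesis forces $\cN'$ to be reducible (for $n \geq 4$), or else $n = 3$ and $\cN'$ is a $2$-point marking of dimension $\dim \cM + 2$, hence trivial. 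In either case, reattaching $p_j$ via the cover produces a product decomposition of $\cN$, contradicting its irreducibility. After this reduction, every linear relation defining the fiber of $\cN$ in period coordinates must involve at least three marked points.

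In the base case $n = 3$, the fiber of $\cN$ over a generic $(X,\omega) \in \cM$ is cut out by a single linear relation $\alpha_1 z_1 + \alpha_2 z_2 + \alpha_3 z_3 = c(X,\omega)$ with the $\alpha_i \in \bR$ all nonzero. I would extract constraints on the $\alpha_i$ from two families of degenerations. For each pairwise collision $p_i \to p_j$, the codimension-$1$ boundary contains a top-dimensional $2$-point marking with relation $(\alpha_i + \alpha_j) z_q + \alpha_k z_k = c$ (where $q = p_i = p_j$), and Theorem \ref{T:Slope1} forces its slope to be $\pm 1$; a sign analysis across all three pairs yields $\alpha_1 + \alpha_2 + \alpha_3 = 0$ (the degenerate case $\alpha_i + \alpha_j = 0$ is handled using the finiteness of $\cM$-periodic points together with the other two collisions). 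For each $k$, pushing $p_k$ to a zero of $\omega$ yields a boundary $2$-point marking with relation $\alpha_i z_i + \alpha_j z_j = \text{const}$, whose slope $-\alpha_j / \alpha_i$ must equal $\pm 1$ by Theorem \ref{T:Slope1}; running over all $k$ forces $|\alpha_1| = |\alpha_2| = |\alpha_3|$. Three nonzero real numbers of common absolute value cannot sum to zero, the final contradiction. For the inductive step $n > 3$, the same scheme applies after the reduction above, with the collision and zero-push degenerations now producing boundary $(n-1)$-point markings to which the inductive hypothesis and Theorem \ref{T:Slope1} apply, yielding an incompatible system of linear constraints on the coefficients of the defining relations of $\cN$. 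The main obstacle will be the combinatorial book-keeping for this linear system when $\cN$ has several defining relations, together with the careful handling of degenerate boundary components (in particular those that decompose as a free marked point times an $\cM$-periodic point, for which Theorem \ref{T:Slope1} does not apply directly but which the finiteness of $\cM$-periodic points still controls).
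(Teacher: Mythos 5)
Your reduction to the pairwise--free case and your base case $n=3$ are essentially sound, and the base case is genuinely different from the paper's argument: the paper never collides marked points, whereas your collision degenerations, combined with the zero--push degenerations (which the paper does use) and Theorem \ref{T:Slope1}, give a clean purely linear contradiction for a single three--term relation. (Two small repairs there: what the collisions give is $|\alpha_i+\alpha_j|=|\alpha_k|$, not $\alpha_1+\alpha_2+\alpha_3=0$, but together with $|\alpha_1|=|\alpha_2|=|\alpha_3|$ this is already impossible since $|\alpha_i+\alpha_j|\in\{0,2|\alpha_1|\}$; and you should verify that the orbit closure of a collided configuration is an irreducible $2$-point marking whose defining relation is the merged one --- this follows from genericity of the collision point, the finiteness of $\cM$-periodic points, and the fact that the closure of $\cN$ near the collision locus is contained in the linear locus cut out by $\cN$'s equations, but it is an extra step the paper never needs.)

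The genuine gap is the inductive step $n>3$. ``The same scheme applies'' is not an argument: what you need is a nontrivial combinatorial statement about an arbitrary indecomposable relation space $V$ with no relations supported on at most two points and corank at least $2$, namely that it is impossible for every zero--push $\mathrm{drop}_k(V)$ and every collision $\mathrm{merge}_{ij}(V)$ to embed into a ``slope $\pm 1$ block'' structure (a direct sum over a partition of sign--hyperplanes plus free singletons), which is all that Theorem \ref{T:Slope1} and the inductive hypothesis give you about the boundary markings. This is not routine book--keeping: for the single relation $z_1+\cdots+z_5=c$ every zero--push constraint is satisfied (four coefficients of equal modulus admit a vanishing signed sum), so the contradiction must come from a careful interplay with collisions, and for higher--rank $V$ the boundary markings can split into several pieces and free points in ways you would have to control uniformly; you give no such analysis and explicitly flag it as the obstacle. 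The paper avoids this entirely by a different mechanism: it runs a double induction on the excess $\dim\cN-\dim\cM$ and on the genus, and after one zero--push it uses the slope-$1$ case of Lemma \ref{L:producecover} to produce a translation covering of a lower--genus surface, pushes the whole configuration forward, and contradicts minimality of the genus. Your plan has no substitute for this genus--reduction step, so as written the lemma is only proved for $n=3$; to complete your route you would need to formulate and prove the combinatorial incompatibility lemma above, which is a substantial missing piece rather than book--keeping.
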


\begin{proof}
Suppose the lemma is not true. Consider a counterexample with $n$ as small as possible.

Pick $(X,\omega, S) \in \cN$ such that the $GL^+(2,\bR)$ orbit of $(X,\omega, S)$ is dense in $\cN$. 

Because of the minimality of $n$, no point of $S$ locally determines the position of any of the others. Hence the same remains true if we forget a point $p$ in $S$; so no irreducible piece $\cN'$ of the orbit closure of $(X,\omega, S-\{p\})$ satisfies $\dim \cN'=\dim \cM+1$, unless it is given by a single unconstrained marked point.  Hence, again because of the minimality assumption, the orbit closure of $(X,\omega, S-\{p\})$ is given by $n-1$ unconstrained points. We conclude that the fiber of $(X,\omega)$ is locally defined by a single equation on the points of $S$. By irreducibly this equation must involve all the points. 

If, for each $p\in S$, we let $\gamma_p$ be a path from a zero of $\omega$ to $p$, this equation can be written as $$\sum a_i \int_{\gamma_{p_i}}\omega= \int_\gamma \omega$$ for some $\gamma \in H^1(X, \Sigma, \bR)$ and some non-zero real numbers $a_i$. 

We will use the following to finish the proof.

\begin{sublem}\label{SL:collide}
If $|S|>3$, or if $|S|=3$ and $a_2+a_3\neq 0$, then it is possible to move in the fiber of $\cN$ so that $p_1$ collides with a zero, but the other points of $S$ do not collide with each other or with zeros. 

If $|S|=3$ and $a_1+a_2\neq 0$ and $a_1+a_2+a_3\neq 0$, then it is possible to move in the fiber of $\cN$ so that $p_1$ and $p_2$ collide, but that $p_3$ does not collide with $p_1, p_2$ or a zero. 
\end{sublem}

It seems likely that Sublemma \ref{SL:collide} could be justified or avoided in different ways; we give one justification now. Note that, if, for example, $a_2+a_3=0$, then $p_2$ and $p_3$ might collide and become a single unconstrained point. 

\begin{proof}
We first prove the first statement. For convenience, rotate the surface to assume without loss of generality that  $p_1$ lies on a vertical line segment starting at a point of $\Sigma$ (also known as a separatrix).  As we now show, we can then make a perturbation so that $p_1$ is the only point of $S$ that lies on vertical line segment  starting at a point of $\Sigma$, while remaining in the fiber of $\cN$. Indeed, in local coordinates, this deformation needs to move each $p_i, i>1$ off of a countable collection of vertical lines. (These arise from finitely many vertical separatrices, which wind around the surface and may intersect a small piece of the surface countably many times.) Because we know the fiber is locally defined by one linear equation as above, we can see that, even while fixing the position of $p_1$, each $p_i$ can be moved off any vertical line segment. So in fact almost every deformation of $S$ fixing $p_1$ and staying in the fiber of $\cN$ will work. 

We now claim that it is possible to choose the deformation so that moreover  afterwards no two points of $S$ are joined by a vertical line segment. If this is not possible, there must be two points, say $p_2$ and $p_3$, so that $p_3$ remains on the vertical line through $p_2$ in all deformations fixing $p_1$. This is only possible if $n=3$ and $a_3=-a_2$. 

Now, we can move $S$ while keeping constant the real parts of the periods of all the $\gamma_i$, so that $p_1$ moves along a vertical line segment to a point of $\Sigma$. Since each point of $S$ travels along a vertical line, this does not cause any of the other points of $S$ to collide with each other or with $\Sigma$. This proves the first statement. 

We now prove the second statement. Perturbing the marked points, we can assume there is a line segment between $p_1$ and $p_2$ not parallel to any other line segment joining points of $\Sigma \cup \{p_1, p_2\}$; without loss of generality this segment is vertical.

First suppose that, no matter how we do this, $p_3$ lies on a vertical line segment starting at a point of $\Sigma$. Then, as we move $p_1$ and $p_2$ equally to the right, $p_3$ must be unchanged, so we conclude that $a_1+a_2=0$, a contradiction. 

Next suppose that, no matter how we do this, that $p_3$ lies on the continuation of the vertical line segment containing $p_1$ and $p_2$. Similarly this contradicts our assumption that $a_1+a_2+a_3\neq 0$. 

So we can assume that $p_3$ does not lie on the continuation of the vertical line segment joining $p_1$ and $p_2$ or on a vertical line segment starting at a point of $\Sigma$. Now we can use a deformation that fixes real periods to collide $p_1$ and $p_2$ without colliding $p_3$ with $p_1, p_2$ or $\Sigma$.
\end{proof}

If $|S|>3$, use Sublemma \ref{SL:collide} to move one of the points of $S$ to a point of $\Sigma$. This gives a new point marking that, by \cite{MirWri}, is defined by a single equation and contradicts the minimality of $n$. Hence $n=3$. 

Now that we know $n=3$, first suppose that $|a_2|\neq |a_3|$. Use Sublemma \ref{SL:collide} to move $p_1$ into a point of $\Sigma$. We can assume that $\gamma_{p_1}$ was a path from the zero that $p_1$ collides with to $p_1$, so that $\gamma_{p_1}$ becomes a constant path after this collision. 
We get a two-point marking defined by 
$$a_2\int_{\gamma_{p_2}}\omega + a_3 \int_{\gamma_{p_3}}\omega = \int_\gamma\omega.$$ 
This contradicts Theorem \ref{T:Slope1}, because it is an irreducible 2-point marking of slope not equal to $\pm1$. 

Next suppose that $|a_1|=|a_2|=|a_3|$. We can assume that $a_1$ and $a_2$ have the same sign, so $a_1+a_2\neq 0$ and $|a_1+a_2|\neq |a_3|$. Use Sublemma \ref{SL:collide} to collide $p_1$ and $p_2$. This gives a two-point marking defined by $$(a_1+a_2)\int_{\gamma_{p_1}}\omega + a_3 \int_{\gamma_{p_3}}\omega = \int_{\gamma+ a_2(\gamma_{p_1}-\gamma_{p_2})} \omega ,$$
where we note that since $p_1=p_2$, the path $\gamma_{p_1}-\gamma_{p_2}$  gives a class in $H_1(X,\Sigma, \bR)$. 
Again this contradicts Theorem \ref{T:Slope1}.
\end{proof}

\begin{proof}[Proof of Theorem \ref{T:main}]
By Theorem  \ref{T:periodic}, there are only finitely many $\cM$-periodic points. By Lemma \ref{L:dimplus1}, we have $\dim \cN=\dim \cM+1$. Hence Lemma  \ref{L:producecover} gives the result. 
\end{proof}

\begin{rem}
Note that while this section has considered only translation surfaces, the results in this section hold almost verbatim for half-translation surfaces as well, except that for half-translation surfaces the slope of a point marking is only well-defined up to sign.
\end{rem}

%
%

\section{The finite blocking problem}\label{S:FB}

In the first subsection we explain the implications of Theorem \ref{T:main}  for finitely blocked points; in the next two we give applications; and in the final subsection we study possible finite blocking sets. The final three subsections can be read independently of each other but all rely on the first.  

\subsection{Consequences of Theorem \ref{T:main}}
Throughout this subsection let $(X, \omega)$ be a translation surface. Given two not necessarily distinct points $x_1$ and $x_2$ on $(X, \omega)$ the finite blocking problem asks whether all straight line paths between $x_1$ and $x_2$ may be blocked by a finite collection of points $B$. If this is possible then we say that $x_1$ and $x_2$ are blocked by $B$. The following lemma provides an example of this phenomenon.  

\begin{lem}\label{L:involution-blocking}
Suppose that $(X, \omega)$ has an involution $j$ so that $j^* \omega = - \omega$. For any point $p$ that is not a zero and is not fixed by $j$, $p$ and $j(p)$ are finitely blocked by the fixed points of $j$.
\end{lem}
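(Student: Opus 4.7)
The plan is to show that any straight-line trajectory from $p$ to $j(p)$ passes through a fixed point of $j$, and then note that the fixed point set of $j$ is finite. Finiteness of the fixed set is immediate: since $j^*\omega = -\omega \neq \omega$, the involution $j$ is not the identity, and a nontrivial holomorphic involution of a compact Riemann surface has only finitely many fixed points.

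For the main assertion, let $\gamma:[0,L]\to X$ be a straight-line geodesic in the flat metric with $\gamma(0)=p$ and $\gamma(L)=j(p)$ (so $\gamma$ avoids the zeros of $\omega$, which is part of the definition of a straight-line trajectory). The key step is to consider
\[
\tilde\gamma(t) := j\bigl(\gamma(L-t)\bigr).
\]
Since $j$ is a diffeomorphism sending geodesics to geodesics (it is an isometry of the underlying flat metric), $\tilde\gamma$ is also a straight-line trajectory. Evaluating at the endpoints, $\tilde\gamma(0) = j(j(p)) = p$ and $\tilde\gamma(L) = j(p)$, so $\tilde\gamma$ runs from $p$ to $j(p)$ as well. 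The key calculation is that in flat coordinates near a non-fixed point, $j^*\omega = -\omega$ forces the differential $dj$ to act as multiplication by $-1$ on tangent vectors, so
\[
\tilde\gamma'(0) = -dj\bigl(\gamma'(L)\bigr) = \gamma'(L) = \gamma'(0),
\]
the last equality because $\gamma$ is a straight line. By uniqueness of geodesics with prescribed initial point and velocity (before hitting a zero), $\tilde\gamma = \gamma$ as parametrized curves on $[0,L]$. Setting $t=L/2$ then gives $j(\gamma(L/2)) = \gamma(L/2)$, so the midpoint of $\gamma$ is a fixed point of $j$. Hence every straight-line path from $p$ to $j(p)$ is blocked by the fixed set of $j$.

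The only subtle point is ensuring the initial direction computation is well-defined, which requires that $p$ not be a fixed point of $j$ (so that $dj$ at the relevant points genuinely acts as $-1$ in flat coordinates) and not be a zero of $\omega$ (so that a flat chart exists near $p$ and the ``initial direction'' of $\gamma$ is unambiguous). Both hypotheses are present in the lemma. I do not expect any substantial obstacle; the argument is essentially the flat-geometric version of the classical observation that under a hyperelliptic involution, every geodesic between a Weierstrass-symmetric pair passes through a Weierstrass point at its midpoint.
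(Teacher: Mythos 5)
Your proof is correct and follows essentially the same route as the paper: the paper also argues that since $j^*\omega=-\omega$, the involution $j$ maps any segment from $p$ to $j(p)$ back to itself (orientation reversed), forcing a fixed point in its interior. You simply spell out the uniqueness-of-geodesics step and the finiteness of the fixed-point set, which the paper leaves implicit.
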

\begin{proof}
Let $\ell$ be a line segment in $(X,\omega)$ joining $p$ to $j(p)$. Since $j^*\omega=-\omega$, we get that $j$ maps $\ell$ to itself, and hence contains a fixed point in its interior.
\end{proof}

 Leli\`evre, Monteil, and Weiss  showed that if $(X, \omega)$ is a translation cover of a torus then any two points are finitely blocked (and, conversely, that this property characterizes torus covers) \cite[Theorem 1]{LMW}.  Hence, in the remainder of this section, we make the following standing assumption. 
 
 \begin{ass}
 Suppose throughout this section that $(X, \omega)$ is not a translation cover of a torus.
 \end{ass}

Recall the result of M\"oller that states that,  with this assumption,  there is a unique translation covering map $\piX:(X,\omega)\to \Xmin$ to a translation surface of minimal genus, and any map from $(X,\omega)$ to a translation surface is a factor of this map \cite[Theorem 2.6]{M2}. This can be extended to quadratic differentials as follows. 

\begin{lem}\label{L:minquad}
There is a quadratic differential $\Qmin$ with a degree 1 or 2 translation covering map $\Xmin\to \Qmin$ such that any translation covering map from $(X,\omega)$ to a quadratic differential is a factor of the composite map $\piQ:(X,\omega)\to \Qmin$.
\end{lem}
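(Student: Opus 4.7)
The plan is to imitate M\"oller's construction. First observe that $X_{\operatorname{min}}$ has no nontrivial translation automorphism: any such would, by quotienting, give a strictly smaller translation quotient of $X$, contradicting minimality of $X_{\operatorname{min}}$ (valid since $(X,\omega)$ is not a torus cover). Consequently, an involution $j \in \Aut(X_{\operatorname{min}})$ satisfying $j^\ast \omega_{\operatorname{min}} = -\omega_{\operatorname{min}}$ is unique when it exists, since any two such would compose to a nontrivial translation automorphism. I would then define $Q_{\operatorname{min}} = X_{\operatorname{min}}$ (with the degree $1$ identity map) if no such $j$ exists, and $Q_{\operatorname{min}} = X_{\operatorname{min}} / \langle j\rangle$ (with the degree $2$ quotient map, equipped with the quadratic differential $q_{\operatorname{min}}$ descended from $\omega_{\operatorname{min}}^2$) otherwise.

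To verify the universal property, let $f: (X,\omega) \to (Y,q)$ be a half-translation map. Let $\pi_Y: (\tilde Y, \tilde\omega) \to (Y, q)$ be the canonical orientation double cover (so $\pi_Y^\ast q = \tilde\omega^2$). Since $f^\ast q = \omega^2$ is a square on $X$, the map $f$ lifts to a translation map $\tilde f: (X,\omega) \to (\tilde Y, \tilde\omega)$ with $f = \pi_Y \circ \tilde f$. M\"oller's theorem applied to $\tilde f$ produces a translation map $\phi: \tilde Y \to X_{\operatorname{min}}$ with $\piX = \phi \circ \tilde f$. The task then reduces to showing that $\tilde Y \to X_{\operatorname{min}} \to Q_{\operatorname{min}}$ is invariant under the deck involution $J$ of $\pi_Y$ (which satisfies $J^\ast \tilde\omega = -\tilde\omega$), so that it descends to the desired $g: Y \to Q_{\operatorname{min}}$ with $\piQ = g \circ f$.

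The key calculation is that $\phi \circ J: \tilde Y \to X_{\operatorname{min}}$ is an anti-translation map (it pulls $\omega_{\operatorname{min}}$ back to $-\tilde\omega$); when $j$ exists, $j \circ \phi$ is likewise such an anti-translation map, and because any two anti-translation maps $\tilde Y \to X_{\operatorname{min}}$ differ by a translation automorphism of $X_{\operatorname{min}}$ (which is trivial), we obtain $\phi \circ J = j \circ \phi$. Composing with the $j$-invariant quotient map $X_{\operatorname{min}} \to Q_{\operatorname{min}}$ then yields the required $J$-invariance. The main obstacle will be excluding the remaining case, in which $q$ is not a square yet no $j$ exists: the plan is to derive a contradiction by using the sign homomorphism $\operatorname{sgn}: \Aut(\tilde Y) \to \{\pm 1\}$ (whose kernel $\Aut_+(\tilde Y)$ is the translation automorphism group of $(\tilde Y,\tilde\omega)$, a normal subgroup containing the deck transformations of $\phi$), exploiting that $J$ lies in the nontrivial coset and that the corresponding $\bZ/2$-action descends through $\tilde Y / \Aut_+(\tilde Y)$ and, via M\"oller's minimality, on to $X_{\operatorname{min}}$ itself, thereby producing the missing sign-flipping involution on $X_{\operatorname{min}}$.
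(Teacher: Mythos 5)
Your skeleton matches the paper's: lift the given map to the orientation double cover, use M\"oller's theorem to map down to $\Xmin$, observe that a form-negating involution $j$ on $\Xmin$ is unique because $\Xmin$ has no nontrivial translation automorphisms, and set $\Qmin=\Xmin/\langle j\rangle$. But two of your steps assert essentially what has to be proved. First, the claim that any two anti-translation maps $\tilde Y\to\Xmin$ differ by a translation automorphism of $\Xmin$ is not formal: it needs an argument (one works: apply M\"oller's theorem to $(\tilde Y,\tilde\omega)$ itself, which is legitimate since $\tilde Y$ is covered by $X$ and hence is not a torus cover; both maps then factor the universal map $\tilde Y\to \tilde Y_{\operatorname{min}}$, and since $(\Xmin,-\omega_{\operatorname{min}})$ covers no smaller-genus translation surface the two factoring maps $(\Xmin,-\omega_{\operatorname{min}})\to\tilde Y_{\operatorname{min}}$ are isomorphisms, whose comparison gives the automorphism). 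Second, and this is the genuine gap, the ``remaining case'' --- $q$ not a square but no $j$ on $\Xmin$ --- is where the entire content of the lemma sits, and your plan does not close it: the involution induced by $J$ on $\tilde Y/\Aut_+(\tilde Y)$ still has to be pushed down along the covering $\tilde Y/\Aut_+(\tilde Y)\to\Xmin$, which may have degree greater than one and need not be normal, and the statement that it descends is precisely the existence of the sign-flipping involution you are trying to produce. As written, ``via M\"oller's minimality'' is carrying the whole argument without justification.

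The paper disposes of both points with one short direct argument, and you already have all the ingredients for it: the composite $\phi\circ J\circ\tilde f:(X,\omega)\to(\Xmin,-\omega_{\operatorname{min}})$ is a translation map from $(X,\omega)$ to a translation surface, so the universal property of $\Xmin$ (applied to maps from $X$, where it is actually stated) gives a map $j:(\Xmin,-\omega_{\operatorname{min}})\to\Xmin$ with $\piX=j\circ\phi\circ J\circ\tilde f$. Since $\piX=\phi\circ\tilde f$ and $\tilde f$ is surjective, this yields $\phi=j\circ\phi\circ J$; thus $j$ negates $\omega_{\operatorname{min}}$, and because $\Xmin$ has genus at least two and no nontrivial translation automorphisms, $j$ is an involution. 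This single step simultaneously produces $j$ whenever $(X,\omega)$ maps to a strictly half-translation surface (so your problematic case simply cannot occur) and gives the equivariance $\phi\circ J=j\circ\phi$ that you wanted, making the auxiliary uniqueness claim about maps out of $\tilde Y$ unnecessary. I would also flag, as a minor point, that when $q$ is a square your $\tilde Y$ is disconnected and $J$ swaps the components, so that case should be treated separately (it is immediate from M\"oller's theorem applied to $f$ itself).
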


\begin{rem}\label{R:Tmin}
Before giving the proof, we recall a consequence  of M\"oller's result. Suppose that $X$ has an automorphism $T$ with $T^* \omega =\xi^{-1} \omega$, where $\xi$ is a primitive $k$-th root of unity. Then 
$$\xi^{-1} \circ \pi \circ T : (X,\omega) \to (X_{\operatorname{min}}, \xi_{\operatorname{min}}^{-1}\omega_{\operatorname{min}})$$ is another translation covering from $(X,\omega)$ to a surface of the same genus, where in the definition of the covering $\xi^{-1}$ denotes  a rotation map $\Xmin \to  (X_{\operatorname{min}}, \xi_{\operatorname{min}}^{-1} \omega_{\operatorname{min}})$. The uniqueness of this map implies there is a map $t: X_{\operatorname{min}}\to X_{\operatorname{min}}$ satisfying $t^* \omega_{\operatorname{min}} = \xi^{-1} \omega_{\operatorname{min}}$ and $\pi \circ T = t \circ \pi$. This $t$ might have smaller order than that of $T$, but of course the order of $t$ is a multiple of $k$. 
\end{rem}

\begin{proof}
If $(X,\omega)$ does not admit any translation covering maps to strictly half translation surfaces, we may set $\Qmin=\Xmin$. 

So suppose there is a translation covering map $h: (X,\omega)\to (Q', q')$, where $(Q',q')$ is not the square of an Abelian differential. 
Recall  that any translation covering map  from a translation surface to a quadratic differential lifts to a map from the translation surface to the square root of the quadratic differential. Let $(X',\omega') \to (Q',q')$ be the square root of $(Q',q')$, and let $J$ be the involution on $(X',\omega')$ so $(Q',q')=(X',\omega')/J$. By the defining property of $\Xmin$, there exists a translation covering map $\pi: (X',\omega')\to \Xmin$,  and by the previous remark we get an automorphism $j$ negating $\omega_{\operatorname{min}}$ and satisfying $\pi = j \circ \pi \circ J$. 


Since $\Xmin$ does not cover a smaller genus translation surface,  we get that  $j$ must be an involution, and  also that  $\Xmin$ has at most one involution negating $\omega_{\operatorname{min}}$. Hence the lemma is true with $\Qmin=\Xmin/j$. 
\end{proof}

We define a point in a point marking to be free if it can be moved freely, independently of the unmarked surface and the position of the other points in the point marking.

 \begin{rem}\label{R:main}
 We now clarify the structure of irreducible point markings, justifying our previous remark that they arise from covering constructions.
 
Given $\cM$, let $\cM_{\operatorname{min}}$ denote the set of all $\Qmin$ arising from all $(X, \omega)\in \cM$. There is a natural point marking $\cM_{\operatorname{min}}^{\mathrm{br}}$ over $\cM_{\operatorname{min}}$ which consists of all $(Q_\mathrm{min}, q_{\mathrm{min}}, B)$ such that there is a cover $(X, \omega)\to (Q_\mathrm{min}, q_{\mathrm{min}})$ branched over $B$, with $(X,\omega)\in \cM$ and $B$ of maximal size. 
If $\cN$ is an irreducible $n$-point  marking over $\cM$ with $n>1$, then for each $(X, \omega, S)\in \cN$, Theorem \ref{T:main} gives that $S$ maps to a single point $p\in (Q_\mathrm{min}, q_{\mathrm{min}})$. We can then consider the point marking $\cN_\mathrm{min}$ which consists of all $(Q_\mathrm{min}, q_{\mathrm{min}}, B\cup\{p\})$ that arise in this way. By Theorem \ref{T:main}, the point marking $\cN_\mathrm{min}$ over $\cM_\mathrm{min}$ consists of a number of $\cN_\mathrm{min}$-periodic points together with a number of free marked points. 

Note that in general $S$ could be a proper subset  of the fiber of $p$; we have $q_{\mathrm{min}}(S)=\{p\}$ but not always $S= q_{\mathrm{min}}^{-1}(q_{\mathrm{min}}(S))$. We also have that $p$ must be free; it cannot be an $\cN_\mathrm{min}$-periodic point.
 \end{rem}

\begin{thm}\label{T:cor}
If $x_1$ and $x_2$ are finitely blocked on $(X,\omega)$, then either they are both $\cM$-periodic points or zeros, where $\cM$ is the orbit closure of $(X,\omega)$, or $\piQ (x_1)=\piQ (x_2)$. 
\end{thm}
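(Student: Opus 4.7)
The plan is to study the $GL(2,\bR)$-orbit closure $\cN$ of $(X,\omega,\{x_1,x_2\})$ in the appropriate marked-point stratum (if some $x_i$ is a zero of $\omega$, it is already distinguished and removed from the marking; the analysis is analogous) and combine a dimension count with Theorem \ref{T:main}. Let $\cM$ be the orbit closure of $(X,\omega)$. Since $(X,\omega)$ is not a torus cover by standing assumption, Theorem \ref{T:periodic} implies $\cM$ has only finitely many $\cM$-periodic points, so Theorem \ref{T:main} is available. The key general observation is that \emph{finite blocking by at most $k$ points is preserved under the $GL(2,\bR)$ action and under limits within a stratum}: if $(Y_n,\eta_n,z_1^n,z_2^n)\to(Y,\eta,z_1,z_2)$ and each is blocked by some $B_n$ with $|B_n|\le k$, then any subsequential limit $B$ of the $B_n$ blocks $(z_1,z_2)$, because every trajectory from $z_1$ to $z_2$ is a limit of trajectories from $z_1^n$ to $z_2^n$ all of which pass through $B_n$.

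I would then run a case analysis on $d=\dim\cN-\dim\cM\in\{0,1,2\}$. When $d=0$, both marked points are $\cM$-periodic, which is the first conclusion of the theorem. When $d=2$, the orbit of $(X,\omega,\{x_1,x_2\})$ is dense in the full two-marked-point bundle over $\cM$, so the limit-preservation observation forces every pair of points on $(X,\omega)$ to be finitely blocked; by \cite[Theorem~1]{LMW} this makes $(X,\omega)$ a torus cover, contradicting the hypothesis. When $d=1$ and $\cN$ is an irreducible $2$-point marking, Theorem \ref{T:main} produces a map from $(X,\omega)$ to a half-translation surface sending $\{x_1,x_2\}$ to a single point; by Lemma \ref{L:minquad} this map factors through $\piQ$, giving $\piQ(x_1)=\piQ(x_2)$.

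The subtle case is $d=1$ with $\cN$ reducible. Dimension counting forces $\cN$ to be a component of $\cN_1\times_\cM\cN_2$ where $\cN_1$ is an $\cM$-periodic-point marking and $\cN_2$ is a free $1$-point marking, so after relabeling $x_1$ is $\cM$-periodic while $x_2$ can be moved freely through a dense open subset of $(X,\omega)$. Density of the orbit inside $\cN$ together with the limit-preservation observation then yields that $(x_1,y)$ is finitely blocked for every $y$ in a dense open subset of $(X,\omega)$ and, by one further passage to limits inside the fiber, for every $y\in X$. This universal blocking condition attached to the single periodic point $x_1$ is what I still need to contradict. I expect this to be the main obstacle; I would handle it either by propagating the uniform blocking condition via $GL(2,\bR)$-equivariance across $\cM$ and bootstrapping to the ``every pair is finitely blocked'' hypothesis of \cite[Theorem~1]{LMW}, or, more directly, by a Hubert--Schmoll--Troubetzkoy-style pigeonhole argument pitting the finiteness of blocking sets against the infinitely many combinatorially distinct trajectories emanating from $x_1$, concluding that $(X,\omega)$ would have to be a torus cover.
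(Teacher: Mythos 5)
Your case split on $d=\dim\cN-\dim\cM$ is fine as far as it goes, but the case you defer --- $d=1$ with $\cN$ reducible, i.e.\ $x_1$ an $\cM$-periodic point (or zero) and $x_2$ free --- is not a residual technicality: it is the main content of the theorem, and neither of your fallback strategies closes it. Knowing that the single point $x_1$ is finitely blocked from every point of $X$ does not give the hypothesis of \cite[Theorem 1]{LMW}, which needs \emph{every} pair of points to be blocked; $GL(2,\bR)$-equivariance only transports the same one-point condition around the orbit and provides no mechanism for blocking pairs $(y_1,y_2)$ with both points away from $x_1$, and the Hubert--Schmoll--Troubetzkoy-style pigeonhole remains an intention rather than an argument. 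Two smaller issues: Lemma \ref{L:minquad} says any map $f$ to a quadratic differential is a \emph{factor of} $\piQ$, i.e.\ $\piQ=h\circ f$, which is the direction you need; ``factors through $\piQ$'' as you wrote it is the opposite and would not yield $\piQ(x_1)=\piQ(x_2)$. And your limit-of-blocking-sets observation needs care when blockers collide with $y_1$, $y_2$, or a zero in the limit; the paper avoids this entirely by staying inside $\cH^{*n+2}$ and using only that the non-blocked condition is open and invariant.

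The missing idea is to mark the \emph{minimal blocking set} $B$ itself and work with the orbit closure $\M_{x_1,x_2,B}$: blocking persists on this closure (Lemma \ref{L:OB}), and minimality forces the positions of the points of $B$ to be locally determined by the unmarked surface together with $x_1,x_2$ (Lemma \ref{L:MB}). In your problem case $x_1$ is periodic, so the non-periodic blockers $B'$ are tied to $x_2$ alone, making $B'\cup\{x_2\}$ an irreducible marking; Theorem \ref{T:main} then sends $B'\cup\{x_2\}$ to a single point of $\Qmin$. Now move $x_2$, keeping $(X,\omega)$ and $x_1$ fixed, into a small disk $U$ around $x_1$ on which $\piQ$ is $z\mapsto z^n$ and closer to $x_1$ than any other periodic point or zero: some blocker $b\in B'$ must lie on the short segment from $x_1$ to $x_2$ inside $U$, hence strictly closer to $x_1$ than $x_2$ is, contradicting $\piQ(b)=\piQ(x_2)$. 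The same device also handles your $d=2$ case without invoking the hard direction of \cite{LMW}: when neither point is periodic, Lemma \ref{L:MB} shows neither is free in $\M_{x_1,x_2,B}$, and Theorem \ref{T:main} applies directly. Without marking $B$, your outline has no tool to rule out the periodic--free configuration, so as it stands the proof is incomplete.
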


To prove this theorem, which is the main result of this subsection, we first require two lemmas. Given two points $x_1$ and $x_2$ that are finitely blocked by a collection of points $B$, let $\M_{x_1,x_2,B}$ be the $\GL_2(\R)$ orbit closure of $(X, \omega; p, q; B)$ in $\cH^{*n+2}$ where $n$ is the size of $B$. We permit the points $x_1$ and $x_2$ to coincide and to be zeros, in which case we use the same notation, but take the orbit closure in $\cH^{*n+1}$ or $\cH^{*n}$. Finally, in order to refer to specific zeros, we will work on a finite cover of $\M$ where the zeros are labeled. We will suppress these details in the sequel.

\begin{lemma}\label{L:OB}
If $(X', \omega'; x_1', x_2'; B')$ belongs to $\M_{x_1,x_2,B}$ then $x_1'$ and $x_2'$ are  blocked by $B'$.
\end{lemma}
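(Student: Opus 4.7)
The plan is to argue by contradiction. Suppose $(X', \omega'; x_1', x_2'; B') \in \cM_{x_1,x_2,B}$ but $x_1'$ and $x_2'$ are not blocked by $B'$. Then there exists a straight line path $\gamma'$ from $x_1'$ to $x_2'$ whose interior avoids $B'$ (and, since we usually disregard trajectories hitting singularities, also avoids the zeros of $\omega'$). Thus $\gamma'$ is a saddle connection on the marked surface $(X', \omega'; x_1', x_2'; B')$, realized as a single straight segment between the marked points $x_1'$ and $x_2'$ with interior disjoint from the finite set $B' \cup \Sigma(\omega')$.

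The first observation is that the finite-blocking property is $GL(2,\bR)$-invariant: since elements of $GL(2,\bR)$ act affinely in local charts, they carry straight line paths to straight line paths and preserve incidence with $B$. Hence every surface in the $GL(2,\bR)$-orbit of $(X, \omega; x_1, x_2; B)$ has its pair of marked points blocked by the corresponding image of $B$. Because $(X',\omega'; x_1', x_2'; B') \in \cM_{x_1,x_2,B}$, we may choose a sequence $(X_k,\omega_k; x_{1,k}, x_{2,k}; B_k) = g_k \cdot (X,\omega; x_1,x_2; B)$ in this orbit converging to $(X',\omega'; x_1', x_2'; B')$ in the stratum of marked translation surfaces. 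Each $(X_k,\omega_k;x_{1,k},x_{2,k};B_k)$ has the blocking property.

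The second ingredient is that saddle connections deform continuously in period coordinates: there is a neighborhood $U$ of $(X',\omega'; x_1', x_2'; B')$ in the stratum such that any surface in $U$ carries a unique saddle connection from its first marked point to its second of the same combinatorial type as $\gamma'$, with holonomy vector depending continuously on the surface. Since the interior of $\gamma'$ lies at some positive distance $\delta>0$ from the finite set $B' \cup \Sigma(\omega')$, shrinking $U$ if necessary we may ensure that the deformed saddle connection on any surface in $U$ has interior disjoint from the corresponding image of $B$. For all sufficiently large $k$, the surface $(X_k,\omega_k;x_{1,k},x_{2,k};B_k)$ lies in $U$, providing a saddle connection on it from $x_{1,k}$ to $x_{2,k}$ that avoids $B_k$. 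This contradicts the blocking of $x_{1,k}$ and $x_{2,k}$ by $B_k$ established in the second paragraph, completing the proof. The main point to verify carefully, though standard, is the continuous persistence of the saddle connection $\gamma'$; a minor technical wrinkle is handling the conventional cases in which $x_1$ and $x_2$ are allowed to coincide or to equal zeros of $\omega$, which is addressed by the relabeling described in the paragraph preceding the lemma.
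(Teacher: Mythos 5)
Your proof is correct and is essentially the paper's argument unpacked: the paper simply observes that the locus of marked surfaces admitting a straight segment from $x_1'$ to $x_2'$ missing $B'$ and the zeros is open and $\GL_2(\R)$-invariant, and your persistence-of-the-segment argument is exactly that openness claim, combined with the same invariance of blocking under the $\GL_2(\R)$ action.
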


\begin{proof}
The locus of $(X', \omega'; x_1', x_2'; B')$ such that there is a straight line segment from $x_1'$ to $x_2'$ not intersecting $B'$ or the zeros of $\omega'$ is open and $\GL_2(\R)$ invariant. Therefore, its complement is closed and $\GL_2(\R)$-invariant and hence contains $\M_{x_1, x_2, B}$ since it contains $(X, \omega; x_1, x_2, B)$. 
\end{proof}

We define a blocking set to be minimal if no proper subset also blocks the two points. 

\begin{lemma}\label{L:MB}
Neither $x_1$ nor $x_2$ is free in $\M_{x_1,x_2,B}$. If $B$ is minimal, then locally in $\M_{x_1,x_2,B}$ the position of the points in $B$ are determined by the unmarked surface and $x_1,x_2$. 

Moreover, either 
\begin{enumerate}
\item  $\M_{x_1,x_2,B}$ consists entirely of periodic points, or 
\item  $\M_{x_1,x_2,B}$ consists of periodic points and one other irreducible piece which contains at least one of $x_1$ or $x_2$.
\end{enumerate}
\end{lemma}

\begin{proof}
If $x_1$ is free, we can move it into a small ball around $x_2$ that doesn't contain any points of $B$, and find a straight line segment from $x_1$ to $x_2$ not intersecting $B$.  

If some points in $B$ could be moved without changing the underlying unmarked surface or the position of $x_1,x_2$, we could move at least one of these points off the countable collection of line segments from $x_1$ to $x_2$ to obtain a smaller finite blocking set. This proves the first statement. 

Given the first statement, if the second statement did not hold, then $x_1$ and $x_2$ would have to be contained in different irreducible pieces that are not periodic points, and there could be no other irreducible pieces that are not periodic points. Pick a surface in $\M$, and move $x_1$ so it is not a periodic point. Then move $x_2$ to $x_1$. If $x_2$ collided with another point of its irreducible piece as it arrives at $x_1$, this would prove that $x_1$ is in fact a periodic point, by Lemma \ref{L:boundaryperiodic}. Hence $x_2$ can be moved very close to $x_1$ without another point of its irreducible piece moving close to $x_1$. This is a contradiction, because we can then find a straight line segment from $x_1$ to $x_2$ not intersecting $B$.  
\end{proof}

\begin{proof}[Proof of Theorem \ref{T:cor}]
Let $B$ be a minimal finite blocking set.  

First we suppose that $x_1$ is $\cM$-periodic or a zero and show that so is $x_2$. Indeed, move $x_2$ very close to $x_1$. Since all points in the point marking are either fixed or move with slope $\pm1$ with respect to $x_2$, there cannot always be a point of $B$ in between $x_2$ and $x_1$. This gives a contradiction to the assumption that $x_1$ and $x_2$ are blocked by $B$. 



Next suppose neither $x_1$ nor $x_2$ is periodic. By the previous lemma and  Theorem \ref{T:main} they must map to the same point under $\piQ $. 
\end{proof}

\begin{cor}\label{C:LMW}
A non-singular point on a translation surface that is not a torus-cover is only finitely blocked from finitely many other points.
\end{cor}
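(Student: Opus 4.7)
The plan is to apply Theorem \ref{T:cor} as a black box and then count, in each of its two alternatives, how many points $x_2$ can finitely block (or be finitely blocked from) a fixed non-singular $x_1$. Fix such a translation surface $(X,\omega)$ with orbit closure $\cM$, and suppose $x_2$ is any point such that $x_1$ and $x_2$ are finitely blocked. Theorem \ref{T:cor} offers a dichotomy: either both points are $\cM$-periodic points or zeros of $\omega$, or else $\piQ(x_1) = \piQ(x_2)$. The strategy is to bound the number of admissible $x_2$ in each branch.

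In the first branch, any qualifying $x_2$ belongs to the union of the zero set $\Sigma$ of $\omega$ and the set of $\cM$-periodic points. The zero set is finite by compactness, and Theorem \ref{T:periodic}, combined with the standing hypothesis that $(X,\omega)$ is not a torus cover, guarantees that $\cM$ admits only finitely many $\cM$-periodic points. Hence the first branch contributes only finitely many candidates for $x_2$, regardless of whether $x_1$ itself happens to be $\cM$-periodic.

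In the second branch, $\piQ \colon (X,\omega) \to \Qmin$ is a holomorphic map of compact Riemann surfaces of some finite degree, so each fiber is finite. In particular, $\piQ^{-1}(\piQ(x_1))$ is a finite set, and any $x_2$ satisfying $\piQ(x_2)=\piQ(x_1)$ lies in this finite fiber. The set of all finitely blocked partners of $x_1$ therefore lies in a finite union of finite sets, proving the corollary.

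There is no real obstacle once Theorem \ref{T:cor} is in hand: the corollary is essentially a bookkeeping consequence of the dichotomy together with Theorem \ref{T:periodic} and the finite-degree nature of $\piQ$ (the latter being a consequence of Lemma \ref{L:minquad} via M\"oller's factorization result for maps out of $(X,\omega)$). The only point worth flagging is that the argument does not require $x_1$ to be non-periodic; the hypothesis that $x_1$ is non-singular is used merely so that the phrasing matches the corollary's statement, and indeed the same count shows that every point of $(X,\omega)$, singular or not, is finitely blocked from only finitely many others.
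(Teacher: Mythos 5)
Your argument is correct and is essentially the paper's own proof: both rest on the dichotomy of Theorem \ref{T:cor}, with the first alternative handled by the finiteness of zeros and of $\cM$-periodic points (Theorem \ref{T:periodic}) and the second by the finiteness of the fiber $\piQ^{-1}(\piQ(x_1))$. The only difference is cosmetic --- you split by which branch of the dichotomy occurs rather than by whether the fixed point is periodic --- and your closing remark that the non-singularity of $x_1$ is not really needed is consistent with the paper's setup, which allows the blocked points to be zeros.
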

\begin{proof}
Let $p$ be a non-singular point on a translation surface that is not a torus cover. If $p$ is periodic then it is only finitely blocked from other periodic points, of which there are finitely many by  Theorem~\ref{T:periodic}. If $p$ is not periodic then it is only finitely blocked from other points in $\piQ^{-1} \left( \piQ(p) \right)$, of which there are only finitely many. 
\end{proof}

\subsection{$k$-differentials, $k>2$.}
Throughout this section we will suppose that $(S, \theta)$ is a Riemann surface $S$ with a meromorphic finite area $k$-differential $\theta$, $k>2$, and $\theta$ is not a power of a lower order differential. Let $(X, \omega)$ be the canonical unfolding of $(S, \theta)$ to an Abelian differential, which comes with a map $\pi_S:(X, \omega) \to (S,\theta)$. In this section we will prove the following:

\begin{thm}\label{T:FBK}
If $(X, \omega)$ is not a translation covering of a torus then there are only finitely many pairs of finitely blocked points on $(S, \theta)$.
\end{thm}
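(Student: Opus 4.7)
The plan is to suppose for contradiction that infinitely many pairs of points on $(S, \theta)$ are finitely blocked, and to derive a contradiction from the incompatibility of the $k$-fold deck group of $\pi_S: (X, \omega) \to (S, \theta)$ with the half-translation structure on $(Q_{\operatorname{min}}, q_{\operatorname{min}})$. First I would lift finite blocking from $S$ to $X$: given a finite blocking set $B \subset S$ for $(s_1, s_2)$, the preimage $\pi_S^{-1}(B)$ is a finite blocking set on $(X, \omega)$ for any pair of preimages $x_1 \in \pi_S^{-1}(s_1)$, $x_2 \in \pi_S^{-1}(s_2)$, because straight line segments push down along $\pi_S$. By Theorem~\ref{T:cor}, each such $(x_1, x_2)$ either has both endpoints in the finite set $P$ consisting of $\cM$-periodic points and zeros of $\omega$ (finite by Theorem~\ref{T:periodic}, as $(X,\omega)$ is not a torus cover), or satisfies $\piQ(x_1) = \piQ(x_2)$.

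Next I would show that having infinitely many finitely blocked pairs forces $\piQ$ to be constant on infinitely many fibers of $\pi_S$. Let $F = \pi_S(P) \subset S$, a finite set. For a finitely blocked pair $(s_1, s_2)$ with $s_1, s_2 \notin F$, no preimage of $s_1$ or $s_2$ lies in $P$, so Theorem~\ref{T:cor} gives $\piQ(x_1^{(i)}) = \piQ(x_2^{(j)})$ for all pairs of preimages; in particular $\piQ$ is constant on both $\pi_S^{-1}(s_1)$ and $\pi_S^{-1}(s_2)$. A pigeonhole argument handles pairs that meet $F$: either some fixed $s_* \in F$ is finitely blocked from infinitely many partners $s$, in which case each such $s$ either inherits the same conclusion or has some preimage constrained to a fixed finite fiber of $\piQ$ (contradicting finiteness of $\piQ$), or else infinitely many pairs already avoid $F$ on both coordinates. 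Hence the set $L = \{s \in S : \piQ|_{\pi_S^{-1}(s)} \text{ is constant}\}$ is infinite. Away from the finitely many branch values of $\pi_S$, $L$ is locally cut out by vanishing of differences of the holomorphic functions $\piQ \circ x_i$ for local sections $x_i$ of $\pi_S$, so $L$ is closed analytic in the Riemann surface $S$; infinitude forces $L = S$.

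With $L = S$, the map $\piQ$ descends through $\pi_S$ to a holomorphic map $\phi: S \to Q_{\operatorname{min}}$ (extending across branch values by removal of singularities) satisfying $\piQ = \phi \circ \pi_S$. The deck group of $\pi_S$ is cyclic of order $k$, generated by an automorphism $\sigma$ of $(X,\omega)$ with $\sigma^* \omega = \zeta \omega$ for a primitive $k$-th root of unity $\zeta$. Since $\pi_S \circ \sigma = \pi_S$, we get $\piQ \circ \sigma = \piQ$, so
\[
\omega^2 = \piQ^* q_{\operatorname{min}} = (\piQ \circ \sigma)^* q_{\operatorname{min}} = \sigma^* \omega^2 = \zeta^2 \omega^2.
\]
This forces $\zeta^2 = 1$, contradicting $k > 2$.

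I expect the main obstacle to be the pigeonhole step that delivers infinitely many fibers into $L$: one must carefully dispose of the case where one coordinate of the finitely blocked pairs is pinned to the finite set $F$, showing that this either supplies more fibers to $L$ or else accumulates infinitely many points in a single $\piQ$-fiber. Once $L = S$ is established, the analytic extension of the factoring map across the branch values and the final deck-group calculation are essentially formal.
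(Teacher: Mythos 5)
Your argument is correct in outline, and it reaches the theorem, but by a genuinely different mechanism than the paper. Both proofs begin the same way: lift blocking through $\pi_S$ and invoke Theorem \ref{T:cor} to conclude that, away from the finite set of periodic points and zeros, $\piQ$ identifies all preimages of a blocked pair. The paper then finishes \emph{pointwise}, with no contradiction-from-infinitude: since $\pi_S^{-1}(s_1)$ is a single $\langle T\rangle$-orbit, constancy of $\piQ$ on it forces its $\piX$-image to be an orbit of size at most two of the automorphism $t$ of $\Xmin$ induced by $T$ (as in Lemma \ref{L:minquad}), i.e.\ to consist of fixed points of the nontrivial automorphism $t^2$; hence \emph{every} blocked pair has both coordinates in the explicit finite set of points whose fibers meet periodic points, zeros, or preimages of fixed points of nontrivial powers of $t$. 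You instead globalize: infinitude plus the identity theorem gives $\piQ\circ T=\piQ$ on all of $X$, and then the character computation $\omega^2=\zeta^2\omega^2$ contradicts $k>2$. What your route buys is that you never need to descend $T$ to $\Xmin$ or analyze fixed points of its powers; what it costs is the pigeonhole bookkeeping (which, when unpacked, is fine: a partner of a fixed $s_*\in F$ with a non-singular, non-periodic preimage has that preimage trapped in the finite set $\piQ^{-1}\bigl(\piQ(\pi_S^{-1}(s_*))\bigr)$) and the analytic-continuation step, where two small points deserve care: ``differences'' of the maps $\piQ\circ x_i$ only make sense in charts on $Q_{\operatorname{min}}$, and an infinite $L$ could a priori accumulate only at the finitely many branch values of $\pi_S$, so at such a point you should run the identity theorem in a local uniformizer $w\mapsto w^d$ before concluding $L=S$ (the intermediate map $\phi$ is not actually needed; $\piQ\circ T=\piQ$ suffices). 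Note also that the paper's pointwise version yields slightly more information --- an explicit finite set containing all blocked pairs --- whereas your argument delivers only the finiteness statement, which is all the theorem asks.
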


\begin{proof}
 $X$ has a rotational self-symmetry $T$ of order $k$ with $(S,\theta)=(X,\omega)/\langle T\rangle$. By Remark \ref{R:Tmin}, we see that it descends to an automorphism $t$ of $X_{\operatorname{min}}$, with $\piX \circ T= t\circ \piX$. 

If $s_1$ and $s_2$ are finitely blocked on $(S, \theta)$, then the set $\pi_S^{-1}(s_1)$ is finitely blocked from the set $\pi^{-1}(s_2)$, which means there is a finite set $B$ such that every straight line segment from one set to the other intersects $B$. Equivalently, every point of one set is finitely blocked from every point in the second set.

Suppose that $s_1$ or $s_2$ is such that $\pi_S^{-1}(s_1)$ and $\pi_S^{-1}(s_2)$ do not contain any $\cM$-periodic points and do not map to any of the finitely many points in $\Xmin$ that are fixed by a non-trivial power of $t$. (As usual, $\cM$ is the orbit closure of $(X,\omega)$.)

We will show that $s_1$ and $s_2$ are not finitely blocked. Suppose in order to find a contradiction that  they are. Consider a point of  $\pi_S^{-1}(s_1)$ and a point of $\pi_S^{-1}(s_2)$. Since these two points are finitely blocked, Theorem \ref{T:cor} gives that they map to the same point in $\Qmin$. Hence $\piQ $ maps $\pi_S^{-1}(s_1)$  to a single point of $\Qmin$. But $\pi_S^{-1}(s_1)$ is a $T$ orbit, so its image on $\Xmin$ must be a $t$ orbit of size at most two, which is a contradiction since $k>2$. 
\end{proof}

Recall that our convention is that polygons are assumed to have connected boundary.

\begin{prop}\label{P:toruscover}
A rational polygon unfolds to the cover of a torus if and only if the polygon is Gaussian, Eisenstein, or rectangle-tiled. 
\end{prop}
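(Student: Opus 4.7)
The plan is to prove the two directions of the equivalence separately, using the classification of finite subgroups of $O(2)$ preserving a rank-$2$ lattice.

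For the forward direction $(\Leftarrow)$, the key observation is that a single $(\pi/4, \pi/4, \pi/2)$ triangle $T_G$ unfolds to a translation surface commensurable with the Gaussian torus $\bC/\bZ[i]$: its linear reflection group is $D_4$, and the eight reflected copies of $T_G$ glue into a square torus whose lattice is a scaled $\bZ[i]$. So if $P$ is Gaussian, every edge of $P$ (and of every reflected copy of $P$ in the unfolding) is a union of edges of congruent $T_G$ tiles of a fixed scale $\ell$, each of which is a vector in $\ell \bZ[i]$. Hence every period of the unfolding $(X_P, \omega_P)$ lies in $\ell \bZ[i]$, which is exactly the condition for a translation cover to $\bC/(\ell\bZ[i])$. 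The Eisenstein case is identical using $T_E = (\pi/6, \pi/3, \pi/2)$ and $\bZ[\zeta_6]$.

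For the reverse direction $(\Rightarrow)$, suppose $P$'s unfolding $(X,\omega)$ is a translation cover of a torus $\bC/\Lambda$. Every edge vector of $P$ and of every reflected copy of $P$ lies in $\Lambda$, and the finite dihedral group $G \leq O(2)$ generated by linear parts of the edge reflections permutes these vectors and hence preserves $\Lambda$. The finite subgroups of $O(2)$ preserving a rank-$2$ lattice are exactly the dihedral groups $D_n$ with $n \in \{1, 2, 3, 4, 6\}$, and the lattices admitting rotational symmetry of order at least $3$ are, up to scaling and rotation, the Gaussian lattice ($n = 4$) and the Eisenstein lattice ($n \in \{3, 6\}$). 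Excluding the low-symmetry possibilities $n \in \{1, 2\}$---these correspond to the unfolding being itself a torus whose lattice is not commensurable with $\bZ[i]$ or $\bZ[\zeta_6]$, and hence not a ``cover of a torus'' in the intended sense---forces $P$'s angles to be multiples of $\pi/4$ (Gaussian case) or $\pi/3$ (Eisenstein case), with edges lying in the corresponding scaled lattice.

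Given these constraints, the final step is to produce an explicit triangular tiling of $P$. The standard Coxeter tiling of $\bC$ by $(\pi/4, \pi/4, \pi/2)$ (resp.\ $(\pi/6, \pi/3, \pi/2)$) triangles is invariant under the affine reflection group whose linear part is $D_4$ (resp.\ $D_6$) and whose translation subgroup is $\Lambda$; after pulling back through the covering $(X,\omega) \to \bC/\Lambda$ and descending along the unfolding map, this yields a triangulation of $P$ itself in which adjacent tiles are automatically reflections of one another. The main obstacle is the alignment step: one must choose a translate of the standard Coxeter tiling so that every corner of $P$ sits at a tile vertex and every edge of $P$ lies along tile edges. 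This uses both the rationality of the edge lengths (a consequence of lying in $\Lambda$) and the angle constraints established above, and the verification reduces to a small case analysis across $n \in \{3, 4, 6\}$.
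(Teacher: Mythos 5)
Your reverse direction has a genuine gap at its very first step, and it is exactly the step that the paper's proof is built to handle. From the assumption that the unfolding $(X,\omega)$ covers a torus $\bC/\Lambda$ you assert that every edge vector of $P$ (and of its reflected copies) lies in $\Lambda$. But edge vectors are \emph{relative} periods of $\omega$ --- integrals along paths joining cone points (or marked corner preimages) --- while a branched translation covering of a torus only forces the \emph{absolute} periods into $\Lambda$; a priori the branch points could sit at irrational points of the torus, in which case the relative periods need not even lie in $\bQ\Lambda$. Everything downstream in your argument (the dihedral group preserving $\Lambda$, the crystallographic restriction, the alignment of $P$ with a Coxeter tiling, whose justification you yourself say uses "edge lengths lying in $\Lambda$") rests on this unproved claim. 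The paper's proof exists precisely to supply it: using the order-$k$ symmetry $T$ with $T^*\omega=\xi\omega$ and the fact that every zero of $\omega$ and every corner preimage is fixed by a nontrivial power of $T$, it shows that $T$ has no primitive $k$-th roots of unity as eigenvalues on $\ker\bigl(H^1(X,\Sigma;\bC)\to H^1(X;\bC)\bigr)$, so the $\xi$-eigenspaces in relative and absolute cohomology are identified over $\bQ$ and the relative periods are \emph{rational} combinations of the absolute ones. Note that even the corrected conclusion is membership in $\bQ\Lambda$, not $\Lambda$: a Gaussian polygon tiled by very small triangles has edges that are rational, not integral, combinations of its period lattice. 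The one piece of your argument that survives without the claim is the restriction on $k$ and on the lattice shape for $k\geq 3$: the rotation acts on absolute homology and multiplies absolute periods by $\xi$, so it genuinely preserves the period lattice, and one may then invoke the crystallographic restriction instead of the paper's Galois-conjugate argument.

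A second gap is your dismissal of $n\in\{1,2\}$, i.e.\ polygons all of whose angles are multiples of $\pi/2$. These do not "correspond to the unfolding being itself a torus": an L-shaped table has angles in $\{\pi/2,3\pi/2\}$ and unfolds to a genus two surface, and such polygons can perfectly well be Gaussian and unfold to torus covers (e.g.\ square-tiled L-shapes), so the implication "torus cover $\Rightarrow$ Gaussian" must still be proved in this case rather than excluded by fiat. Here your lattice-symmetry argument gives no information at all, since $-\mathrm{Id}$ preserves every lattice; the paper treats $k=2$ with the same relative-versus-absolute period argument as the other cases. As written, then, the proposal establishes neither the key rationality of the edge vectors nor the case of angles that are multiples of $\pi/2$.
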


\begin{proof}
Let $P$ be a rational polygon and suppose its unfolding  $(X,\omega)$ is a torus cover; we will show $P$ is Gaussian, Eisenstein, or rectangle-tiled. (The other direction is easy.)  

Let $k$ be the least common denominator of the angles divided by $\pi$, so $X$ admits an order $k$ symmetry $T$ with $T^*(\omega)=\xi\omega$, where $\xi$ is a primitive $k$-th root of unity. 
The symmetry $T$ arises in the definition of the unfolding of $P$, and the quotient of $X$ by $\langle T\rangle$ can be understood  as two copies of the $P$ glued together to form a sphere, often called the pillowcase double of $P$. 

By assumption, $\omega$ lies in a two dimensional subspace of $H^1(X, \bC)$ defined over $\bQ$, spanned by $\omega$ and its complex conjugate. Since $T^*(\omega)=\xi\omega$, this subspace is invariant under $T^*$. Hence $T^*$ restricted to this rational subspace must have all Galois conjugates of $\xi$ as eigenvalues. Hence, the degree of $\bQ(\xi)$ as a field extension of $\bQ$ is at most 2, and we conclude that $k\in \{2,3,4, 6\}$.

For each zero of $\omega$ there is some non-trivial power of $T$ that fixes it.  Indeed, a zero of $\omega$ that was not fixed by any non-trivial power of $P$ would give rise to a cone angle of the pillowcase double of angle at least $4\pi$, which is a contradiction because we have assumed that $P$ does not have any slits.  Hence if $p:H^1(X,\Sigma, \bC)\to H^1(X,\bC)$ is the usual map from cohomology relative to the set $\Sigma$ of zeros of $\omega$ to absolute cohomology, we get that $T$ acting on $\ker(p)$ does not have any primitive $k$-th roots of unity as eigenvalues. (Here we count preimages of corners of $P$ of angle $\frac{\pi}{k}$ as zeros of order zero and include them in $\Sigma$.) Indeed, $\ker(p)$, viewed as a representation of $\bZ/n$ via the action of $T$, is a sub-representation of a direct sum of representations of $\bZ/n$ that factor through smaller groups.  Hence the dimension of the $\xi$-eigenspace of $T$ is the same in absolute and relative cohomology. 

The sum of primitive eigenspaces of $T$ in relative and absolute cohomology are both defined over $\Q$. Since $p$ induces a $\Q$-linear isomorphism between them it follows that the relative periods of $\omega$ are rational linear combinations of the absolute periods of $\omega$. 


If $k\in \{3,4, 6\}$ then the periods span a lattice in $\bC$, and after rotating and scaling the relative periods lie in $\bQ[\xi]$.  Since the sides of $P$ are all periods, we can, up to rescaling, assume that the vertices of $P$ lie in $\bZ[\xi]$. We can also assume that some edge of $P$ is horizontal, by rotating and scaling. Since all angles of $P$ are multiples of $\pi/k$, we get that all edges have angle a multiple of $\pi/k$ with horizontal. 

First suppose $k=4$. Then the vertices of $P$ are in $\bZ[i]$. Starting with the usual 1 by 1 square grid, we can subdivide each square into 8 triangles so that the center of the square and the center of each edge, as well as the four corners of the square, are the vertices. Then it is easy to see that all edges of $P$ are unions of edges of this triangulation of the plane, and hence that $P$ is Gaussian. 

Next suppose $k\in \{3,6\}$. Then the vertices of $P$ are vertices in the usual tiling of the plane by equilateral triangles. We can similarly subdivide each equilateral triangle into six triangles, and again get that all the edges of $P$ are unions of edges of this triangulation of the plane, and hence that $P$ is Eisenstein.

If $k = 2$ then after rotating and scaling the relative periods lie in $\Z[ia]$ for some real number $a$ and the polygon is rectangle-tiled. 
\end{proof}

\begin{proof}[Proof of Theorem \ref{T:poly}]
If $P$ is Gaussian or Eisenstein, then it unfolds to a torus cover, where it is known that any two points are finitely blocked. Hence any two sets of points are finitely blocked (just take the union of the blocking sets). 

Suppose now that $P$ is not Gaussian or Eisenstein. By Proposition~\ref{P:toruscover}, $P$ does not unfold to a torus cover and so there are only finitely many periodic points on the unfolding and any point is finitely blocked from only finitely many others by Theorem~\ref{T:cor}. If some angle is not an integer multiple of $\frac{\pi}{2}$ then the pillowcase double of $P$ is a $k$-differential for $k > 2$ and so Theorem~\ref{T:FBK} implies that there are only finitely many pairs of finitely blocked points on $P$.
%
\end{proof}

\subsection{Prime triangles.}
Theorem \ref{T:main} can sometimes be applied without knowing the orbit closure of a translation surface, since both flat and algebro-geometric methods exist to restrict the number of periodic points on a translation surface without knowing the orbit closure. Here is one example. 

\begin{thm}\label{T:prime}
Consider a triangle with angles $\frac{a}{\ell}\pi, \frac{b}{\ell}\pi, \frac{c}{\ell}\pi$ with $\ell>3$ prime and $\{a, b, c\} \ne \{1, 2, 4\}$. If two points are finitely blocked they are both vertices and the minimal blocking set is empty in the non-isosceles case and $\{m\}$ where $m$ is the midpoint of the line joining the two vertices of equal angle in the isosceles case.
\end{thm}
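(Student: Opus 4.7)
The plan is to reduce to finite blocking on the Katok--Zemlyakov unfolding $(X,\omega)$ and exploit the $\ell$-fold rotational symmetry. The surface $(X,\omega)$ carries an automorphism $T$ of order $\ell$ with $T^*\omega = \xi\omega$ for a primitive $\ell$-th root of unity $\xi$. Since $\ell>3$ is prime and hence $\ell\notin\{2,3,4,6\}$, the argument of Proposition~\ref{P:toruscover} shows $(X,\omega)$ is not a translation cover of a torus, so Theorem~\ref{T:periodic} provides only finitely many $\cM$-periodic points, where $\cM$ is the orbit closure of $(X,\omega)$. Any finitely blocked pair $x_1,x_2$ on the triangle lifts to a finitely blocked pair $\tilde x_1,\tilde x_2 \in (X,\omega)$, and Theorem~\ref{T:cor} leaves two cases: either both $\tilde x_i$ are zeros or $\cM$-periodic, or $\piQ(\tilde x_1) = \piQ(\tilde x_2)$.

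To handle the $\piQ$ case, I would use that any non-vertex point of the triangle has a $T$-orbit of preimages on $(X,\omega)$ of size exactly $\ell$. The symmetry $T$ descends to an automorphism $t$ of $\Xmin$ whose order divides $\ell$ and cannot be $1$---otherwise $\piX$ would factor through the pillowcase double of the triangle, but $\bC P^1$ carries no Abelian differential---so $t$ has order $\ell$. Since $\Qmin$ is obtained from $\Xmin$ by a quotient of degree at most $2$, a $T$-orbit of size $\ell$ in $X$ that collapses in $\Qmin$ must project to a $t$-orbit in $\Xmin$ of size at most $2$; because $\ell>2$ is prime this forces size $1$, so $\piX(\tilde x_i)$ lies in the finite fixed-point set of $t$. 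In the periodic-point case, the set of $\cM$-periodic points on $(X,\omega)$ is $T$-invariant and finite, again yielding finitely many non-vertex candidates. The main obstacle is ruling out these residual candidates, which I would handle by combining Apisa's periodic-point classification~\cite{Apisa} with the rigidity imposed by the prime-order symmetry; the hypothesis $\{a,b,c\}\neq\{1,2,4\}$ enters exactly here, since the excluded $(1,2,4)/7$ triangle is the Kontsevich--Zagier example whose unfolding is a hyperelliptic Veech surface admitting a Weierstrass periodic point that is not a vertex.

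Once $x_1,x_2$ are known to be vertices, the minimal blocking set is computed directly. In the non-isosceles case, elementary analysis on $(X,\omega)$ shows that $V$ is a blocking set and that no vertex can be omitted. In the isosceles case, the reflection swapping the two equal-angle vertices lifts to an involution $j$ on $(X,\omega)$ with $j^*\omega = -\omega$, and Lemma~\ref{L:involution-blocking} forces the midpoint $m$ of the base edge into any blocking set for that pair of vertices; one then verifies $V \cup \{m\}$ suffices by combining $j$-symmetry with the non-isosceles analysis applied to pairs involving the apex.
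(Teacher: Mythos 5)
The overall skeleton you set up (unfold, use Theorem \ref{T:periodic} and Theorem \ref{T:cor}, kill the $\piQ(\tilde x_1)=\piQ(\tilde x_2)$ case with the order-$\ell$ symmetry as in Theorem \ref{T:FBK}) matches the paper's reduction, and is fine as far as it goes; for $\ell$ prime one even has $(X,\omega)=\Xmin$, which simplifies that step. But the heart of the theorem is exactly the step you defer as ``the main obstacle'': showing that the only $\cM$-periodic points on the unfolding are the preimages of the vertices (plus Weierstrass points in the isosceles case). Your plan to get this from Apisa's classification \cite{Apisa} does not work: that result classifies periodic points only over connected components of strata of Abelian differentials, whereas here the orbit closure $\cM$ of the unfolding is not known (and need not be a full stratum component), and no concrete mechanism is offered by which ``rigidity imposed by the prime-order symmetry'' would exclude non-vertex periodic points. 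The paper's proof uses genuinely different, arithmetic input at this point: by Filip \cite{Fi2}, periodic points have differences from the branch points that are torsion in the Jacobian (primality of $\ell$ ensuring the relevant factor is the whole Jacobian and that no nontrivial twisting can occur for a pair of points), and then the torsion-point theorems of Tzermias \cite{Tzermias} and Grant--Shaulis \cite{Grant-Shaulis} (Theorem \ref{T:prime2}) show that on these cyclic covers of the sphere the only such points are branch points, together with Weierstrass points when the unfolding is hyperelliptic, i.e.\ when the triangle is isosceles. Since those theorems require $\ell>5$ and exclude $(1,2,4)$, the paper also gives separate arguments for $\ell=5$ (both triangles unfold to genus-two Teichm\"uller curves, where M\"oller's classification of periodic points applies) and for $\ell=7$. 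Your stated reason for the hypothesis $\{a,b,c\}\neq\{1,2,4\}$ is also incorrect: that triangle is not isosceles, its unfolding is not hyperelliptic and is not a Veech surface; it is excluded because it is an exceptional case of the torsion-point theorems, not because of an extra Weierstrass periodic point.

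The endgame also needs repair. Lemma \ref{L:involution-blocking} shows that the fixed points of the involution \emph{form} a blocking set for a pair exchanged by it; it does not force $m$ into every blocking set, so your use of it is backwards. The paper instead argues via Lemma \ref{L:MB} that a minimal blocking set for a pair of blocked periodic points consists only of periodic points, hence (after the classification above) is contained in $V$ resp.\ $V\cup\{m\}$, and then uses explicit degenerate Fagnano trajectories to rule out $m$ being blocked from itself, and the same periodic-point constraint to rule out $m$ being blocked from a vertex; your ``elementary analysis'' for which pairs involving vertices are blocked and for minimality of the blocking set is not supplied. These last points could be filled in with work, but the missing arithmetic classification of periodic points is a genuine gap: without Theorem \ref{T:prime2} (or an equivalent), the argument does not reach the conclusion.
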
 

\begin{rem}
The authors have verified that the result still holds for the $(1,2,4)$ triangle, but have chosen to omit the proof.
\end{rem}
\begin{rem}\label{R:permission}
We permit billiard paths to run along the edge of the table, but not to pass through vertices of the polygon. Using the description of the finite blocking set, we get moreover that two vertices with different angle are not finitely blocked. The statement allows for the possibility that a vertex is finitely blocked from itself. 
\end{rem}


 If $\ell=3$, the triangle unfolds to a torus and any two points are finitely blocked. 
 
 We require the following deep result due to Tzermias~\cite[Theorem 1.1]{Tzermias} in the nonhyperelliptic case and Grant-Shaulis~\cite[Theorem 1.1]{Grant-Shaulis} in the hyperelliptic case and which builds on work of Coleman~\cite{Coleman-etale} and Coleman-Tamagawa-Tzermias~\cite{CTT-Fermat}.

\begin{thm}\label{T:prime2}
Let $(X,\omega)$ be the unfolding of a triangle with angles $\frac{a}{\ell}\pi, \frac{b}{\ell}\pi, \frac{c}{\ell}\pi$ with $\ell>5$ prime and so that $(a, b, c) \ne (1,2,4)$. The only points of $(X,\omega)$ whose difference from a branch point is torsion are branch points and, when $(X,\omega)$ is hyperelliptic, Weierstrass points. 
\end{thm}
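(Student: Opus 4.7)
The unfolding of a triangle with angles $\frac{a}{\ell}\pi, \frac{b}{\ell}\pi, \frac{c}{\ell}\pi$ (with $a+b+c=\ell$) is, up to rescaling, the cyclic cover of $\bC P^1$ of degree $\ell$ given by the affine equation $y^\ell=x^a(1-x)^b$, branched over $\{0,1,\infty\}$ with ramification data determined by $(a,b,c)$. The branch points of $\pi_S:(X,\omega)\to \bC P^1$ lying over $\{0,1,\infty\}$ correspond to the three vertex orbits of the triangle unfolding. The statement to prove is therefore a Manin--Mumford style assertion: fixing one branch point $B_0$ as a basepoint, the set $\{P\in X : [P-B_0]\in \Jac(X)_{\operatorname{tors}}\}$ (the \emph{torsion packet} of $B_0$) consists only of branch points, together with the Weierstrass points when $X$ is hyperelliptic.

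My plan would be to follow Coleman's $p$-adic method. First I would check that for any prime $p\neq \ell$, the curve $y^\ell=x^a(1-x)^b$ has good reduction at a suitable prime above $p$, and choose $p$ small and convenient (e.g.\ $p=2$ or $3$). Then I would use the fact that torsion points in $\Jac(X)$ inject into the formal group via reduction, so that $P$ being torsion (relative to $B_0$) forces $\int_{B_0}^P \omega_i = 0$ for every holomorphic differential $\omega_i$, where the integration is Coleman's rigid-analytic iterated integral inside a residue disk. Using the explicit basis of differentials $\omega_{r,s}=x^{r-1}(1-x)^{s-1}y^{-m}\,dx$ for appropriate $(r,s,m)$, and the $\mu_\ell$-action generated by $y\mapsto \zeta_\ell y$, I would decompose the equations into eigenspaces of the cyclic automorphism to obtain a system of $p$-adic power-series equations and then show that, outside the cusps/branch points, these equations have no common zeros. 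This is essentially the strategy carried out in Coleman--Tamagawa--Tzermias for the classical Fermat curve $x^\ell+y^\ell=1$ and refined by Tzermias for quotients; the exclusion $(a,b,c)=(1,2,4)$ reflects the exceptional coincidences in that one family where extra torsion relations actually occur.

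For the hyperelliptic case I would argue separately: if $\iota$ is the hyperelliptic involution and $W$ is a Weierstrass point, then for any point $P$ the divisor $[P]+[\iota P]$ is linearly equivalent to $2[W]$, so $2([P]-[W])\sim [\iota P]-[P]$; applied with $P$ a branch point whose image under $\iota$ is also a branch point (which happens for hyperelliptic unfoldings because $\iota$ permutes the preimages of each vertex), this shows every Weierstrass point lies in the torsion packet of a branch point. The reverse inclusion---that no other points do---follows again from Coleman integration: one verifies that after passing to the Prym quotient $\Jac(X)/(1+\iota)$, the vanishing of $p$-adic integrals of the $(-1)$-eigendifferentials already pins $P$ down to the finitely many points of $X^{\iota}$ (the Weierstrass points) together with the cusps.

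The main obstacle, and the reason Tzermias and Grant--Shaulis need substantial work, is bookkeeping at small primes: the $p$-adic argument produces a bound on $\#(\text{torsion packet})$ that matches the trivial lower bound only after an explicit Newton-polygon analysis of each eigenspace, and the constraint $\ell>5$ prime (together with the $(1,2,4)$ exclusion at $\ell=7$) is exactly what is needed to keep all these Newton polygons nondegenerate. Handling the residual primes $\ell=5$ and the sporadic exception requires separate, often computer-assisted, verification, which is why the theorem is cited as a deep external input rather than reproved here.
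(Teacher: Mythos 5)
The paper does not actually prove this statement: it is imported as a deep external result of Tzermias (nonhyperelliptic case) and Grant--Shaulis (hyperelliptic case), building on Coleman and Coleman--Tamagawa--Tzermias, which is precisely the literature and the Coleman $p$-adic torsion-packet strategy your sketch describes (including the easy inclusion of Weierstrass points via $P+\iota P\sim 2W$ and torsion differences of branch points). So your proposal is consistent with the paper's treatment; like the paper, it defers the hard content (the eigenspace and Newton-polygon analysis) to those references rather than giving a self-contained argument, so there is nothing substantive to compare beyond noting it is an outline, not a proof.
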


\begin{proof}[Proof of Theorem \ref{T:prime}.]
By work of Filip, the difference between any two periodic  points of $(X,\omega)$ must be torsion in the Jacobian \cite{Fi2}. (In general, Filip allows for more complicated twisted torsion relations, but to have non-trivial twisting one must consider relations between at least 3 points. In general, Filip also allows for the difference to be merely torsion in a factor of the Jacobian, but since $\ell$ is prime the relevant factor is in fact the whole Jacobian.)

When $\ell$ is prime,  $\omega$ does not lie in any proper rationally defined subspace of $H^1(X,\bC)$, because $\omega$ lies in an eigenspace whose Galois conjugates span $H^1(X,\bC)$. It follows that $(X,\omega)$ does not cover a translation surface of smaller genus. So $(X,\omega)=\Xmin$. Furthermore, if $X$ has an involution negating $\omega$, then this involution must be hyperelliptic, since its minus one eigenspace is rationally defined. Hence  $X$ is hyperelliptic or $X=Q_{\operatorname{min}}$.  By Theorem~\ref{T:cor} the only pairs of finitely blocked points are points that unfold to periodic points.  

Suppose first that $(X, \omega)$ is not hyperelliptic. When $\ell > 5$  and $\{a, b, c\} \ne \{1, 2, 4\}$, Theorem~\ref{T:prime2} implies that the only points that unfold to periodic points are vertices of the triangle.  By Lemma~\ref{L:MB}, a minimal blocking set of two finitely blocked periodic points consists of only periodic points. 

Now suppose $(X,\omega)$ is hyperelliptic. This happens if and only if the triangle is isosceles (see for example \cite[Section 4]{Coleman-etale}). When $\ell > 5$, Theorem~\ref{T:prime2} states the only points that unfold to periodic points (aside from the vertices) are points that unfold to Weierstrass points. The only such point on an isosceles triangle is the midpoint $m$ of the edge between the two vertices of equal angle.  Indeed, since the hyperelliptic involution is central and is a flat isometry, it descends to an involution of the pillowcase double. (Since $\ell$ is odd, the hyperelliptic involution cannot be part of the deck group.) This involution can be viewed as an automorphism of $\bP^1$ exchanging two points corresponding to the two equal angles and fixing a third point, and there is only one such involution.  

By Lemma~\ref{L:MB}, a minimal blocking set of two finitely blocked periodic points consists of only periodic points. Therefore, $m$ is not finitely blocked from any of the vertices of the triangle. The trajectories shown in Figure~\ref{F:Fagnano} shows that $m$ is not blocked from itself. Therefore, in the hyperelliptic case we have also shown that the only finitely blocked points are pairs of vertices. 
\begin{figure}[h!]
    \begin{subfigure}[b]{0.4\textwidth}
        \centering
        \resizebox{.8\linewidth}{!}{\begin{tikzpicture}        		
        		\draw (0,0) -- (2, 2) -- (4, 0) -- (0,0);
              	  \draw[dashed] (2,0) -- (1,1) -- (3,1) -- (2,0);
	\node at (.4, .2) {$\theta$}; \node at (3.6, .2) {$\theta$}; \node at (1.4, 1.2) {$\theta$}; \node at (2.6, 1.2) {$\theta$};
		\draw[black, fill] (2, 0) circle[radius = 1.6pt]; \node at (2, -.2) {$m$};
           \end{tikzpicture} }
            \label{SF:blah1}
            \caption{ $\theta > \pi/4$ }
    \end{subfigure}
        \qquad
        \begin{subfigure}[b]{0.4\textwidth}
        \centering
        \resizebox{.8\linewidth}{!}{\begin{tikzpicture}
        \draw (0,0) -- (2, 2) -- (4, 0) -- (0,0);
           \draw[dashed] (1,1) -- (2,0) -- (3,1);
	\node at (.4, .2) {$\theta$}; \node at (3.6, .2) {$\theta$}; \node at (1.6, .2) {$\theta$}; \node at (2.4, .2) {$\theta$};
		\draw[black, fill] (2, 0) circle[radius = 1.6pt]; \node at (2, -.2) {$m$};
            \end{tikzpicture} }
              \label{SF:blah2}
             \caption{ $\theta = \pi/4$ }
    \end{subfigure}
    \qquad
        \begin{subfigure}[b]{0.4\textwidth}
        \centering
        \resizebox{.8\linewidth}{!}{\begin{tikzpicture}
        \draw (0,0) -- (2, 2) -- (4, 0) -- (0,0);
           \draw[dashed] (1,0) -- (1,1) -- (2,0) -- (3,1) -- (3,0);
	\node at (.4, .2) {$\theta$}; \node at (3.6, .2) {$\theta$}; \node at (1.2, .4) {$2\theta$}; \node at (2.8, .4) {$2\theta$};
		\draw[black, fill] (2, 0) circle[radius = 1.6pt]; \node at (2, -.2) {$m$};
            \end{tikzpicture} }
              \label{SF:blah22}
             \caption{ $\theta < \pi/4$ }
    \end{subfigure}
\caption{The degenerate Fagnano trajectory in an isosceles triangle}
\label{F:Fagnano}
\end{figure}
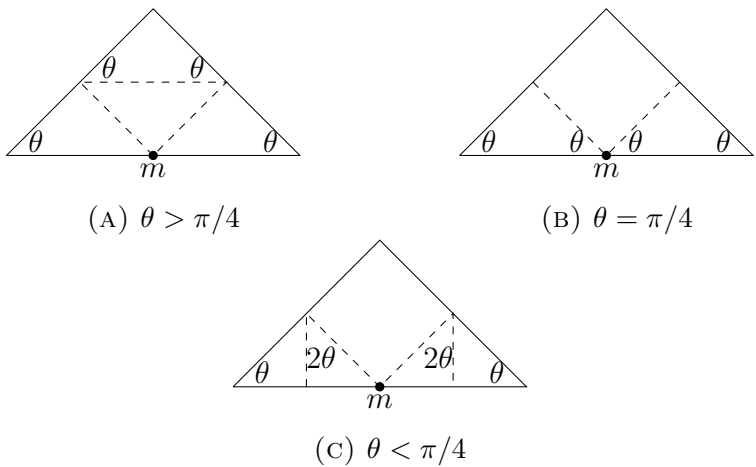

Both triangles with $\ell = 5$ unfold to Teichm\"uller curves in genus two and hence M\"oller~\cite{M2} implies that the only periodic points are zeros and Weierstrass points. Hence the above arguments hold also in this case. 
\end{proof}

\subsection{Description of blocking sets.}
Assume that $(X,\omega)$ is not a torus cover. 

\begin{thm}\label{T:blockingsets}
Let $x_1$ and $x_2$ be finitely blocked on $(X,\omega)$, and let $B$ be any minimal blocking set. If $x_1$ and $x_2$ are periodic points, then so are all points in $B$. Otherwise, one of the following holds:
\begin{enumerate} 
\item  $\piX(x_1)=\piX(x_2)$,  $B$ does not contain any periodic points and $\piX$ maps $\{x_1,x_2\}\cup B$ to a single point. 
\item  $\piX(x_1)\neq \piX(x_2)$ but $\piQ (x_1)=\piQ (x_2)$, and if  $B'$ is the set of non-periodic points in $B$, then $\piQ $ maps $\{x_1,x_2\}\cup B'$ to a single point.  
\end{enumerate}
\end{thm}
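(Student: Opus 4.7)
Proof plan. I would proceed case by case as the theorem is stated, using Lemma~\ref{L:MB} to pin down the local structure of $B$ inside the orbit closure $\M_{x_1,x_2,B}$, together with Theorem~\ref{T:cor} and Theorem~\ref{T:main} (in the form of Remark~\ref{R:main}) to control the images of $\{x_1,x_2\}\cup B$ under $\piX$ and $\piQ$.

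If both $x_1$ and $x_2$ are $\cM$-periodic, then Lemma~\ref{L:MB} says that in $\M_{x_1,x_2,B}$ the positions of the points of $B$ are locally determined by the unmarked surface together with $x_1$ and $x_2$; since the $\cM$-periodicity of $x_1$ and $x_2$ means their positions are already determined by the unmarked surface, the same is true of each $b\in B$, so every $b$ defines a $1$-point marking of dimension $\dim\cM$ over $\cM$, i.e., a periodic point. In the remaining case, at least one of $x_1,x_2$ is not periodic, so Theorem~\ref{T:cor} gives $\piQ(x_1)=\piQ(x_2)$. Let $B'$ be the set of non-periodic points in $B$. Lemma~\ref{L:MB}, together with the same irreducibility argument used in the proof of Theorem~\ref{T:cor}, shows that $\{x_1,x_2\}\cup B'$ is an irreducible point marking of dimension $\dim\cM+1$. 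Theorem~\ref{T:main} and Remark~\ref{R:main} then give that $\piQ$ sends $\{x_1,x_2\}\cup B'$ to a single point, which is exactly the conclusion of subcase~(2).

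For subcase~(1), where $\piX(x_1)=\piX(x_2)=:w$, my plan is to upgrade this $\piQ$-statement to a $\piX$-statement by showing that the cover produced by Theorem~\ref{T:main} is actually a translation cover, and so factors through $\piX$ by Lemma~\ref{L:minquad} and M\"oller's uniqueness theorem. The condition $\piX(x_1)=\piX(x_2)$ forces the $2$-point marking of $(x_1,x_2)$ to have slope $+1$: a deformation of $x_1$ by a vector $\vec u$ must be matched by the same deformation of $x_2$ for $x_2$ to remain in the $\piX$-fiber of $x_1$. In the covering construction from the proof of Lemma~\ref{L:producecover}, pairs linked with slope $+1$ give rise to pure translation transitions in the quotient, whereas slope $-1$ introduces a transition composed with multiplication by $-1$. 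Thus, once every pair in $\{x_1,x_2\}\cup B'$ is linked with slope $+1$, the quotient is a genuine translation surface, the cover factors through $\piX$, and $\piX$ sends $\{x_1,x_2\}\cup B'$ to $\{w\}$. Finally, $w$ cannot be $\cM_{\min}$-periodic, since otherwise $\piX^{-1}(w)$ would be an $\cM$-invariant finite set containing $x_1$, contradicting that $x_1$ is not periodic; hence a hypothetical periodic $b\in B$ would have $\piX(b)=w$ periodic in $\cM_{\min}$, a contradiction, and $B$ contains no periodic points.

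The main obstacle is to exclude, in subcase~(1), the possibility that some $b\in B'$ is linked to $x_1$ with slope $-1$ --- equivalently, that $\piX(b)=J(w)$, where $J$ is the involution on $X_{\operatorname{min}}$ with $X_{\operatorname{min}}/J\cong Q_{\operatorname{min}}$. I would try to rule this out by a minimality argument: straight lines from $x_1$ to $x_2$ passing through such a $b$ are special in that their $\piX$-images are loops at $w$ in $X_{\operatorname{min}}$ that visit $J(w)$ as an interior waypoint; by deforming within the orbit closure I expect to exhibit, for each such line, an alternative blocking by an element of $B'\cap\piX^{-1}(w)$, contradicting the minimality of $B$. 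Making this geometric picture precise, and verifying that the required deformations can be carried out while respecting all the point-marking constraints, is the technical heart of the argument.
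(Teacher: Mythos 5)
Your treatment of the case where both $x_1,x_2$ are periodic and of case (2) matches the paper's proof (Lemma~\ref{L:MB} for the former; Lemma~\ref{L:MB} plus Theorem~\ref{T:main} for the latter). The problems are in case (1), where your argument has two genuine gaps. First, the step you yourself defer---excluding points $b\in B'$ linked to $x_1$ with slope $-1$, i.e.\ with $\piX(b)=j(w)$ where $j$ is the involution on $X_{\operatorname{min}}$ and $w=\piX(x_1)$---is exactly the content of case (1), so your conclusion that the cover is a translation cover factoring through $\piX$ is conditional on an unproved claim. Moreover your proposed fix (exhibit, for each trajectory blocked by such a $b$, an ``alternative'' blocker over $w$) is not how the paper proceeds and it is unclear it can be carried out. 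The paper's mechanism is different and simpler: by Lemma~\ref{L:OB} blocking persists on the orbit closure, so one may perturb; since $\piX(x_1)=\piX(x_2)$, every straight segment from $x_1$ to $x_2$ projects under $\piX$ to a closed geodesic (periodic line) through $w$; after moving $x_1$ so that $w$ is not on the central core curve of any cylinder, $j(w)$ lies on \emph{no} closed geodesic through $w$ (a $j$-invariant closed geodesic in a cylinder must be the central one, since locally $j$ is $z\mapsto -z+c$). Hence points over $j(w)$ block nothing at all, and minimality of $B$ discards them.

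Second, your argument that $B$ contains no periodic points does not work: you assume a hypothetical periodic $b\in B$ satisfies $\piX(b)=w$, but nothing forces this. Lemma~\ref{L:MB} and Theorem~\ref{T:main} constrain only the non-periodic points $B'$ to lie over $\piQ(x_1)$; a periodic blocker is pinned down by the unmarked surface alone and may a priori sit anywhere on $(X,\omega)$. (Your parenthetical point that $w$ itself cannot be $\cM_{\operatorname{min}}$-periodic is fine, but irrelevant to such a $b$.) The paper needs a separate genericity step here: moving $x_1$ so that $w$ also avoids the countably many periodic lines through periodic points of $X_{\operatorname{min}}$, no closed geodesic through $w$ meets the image of a periodic point, so no periodic point of $(X,\omega)$ can block a segment from $x_1$ to $x_2$, and minimality again gives that $B=B'$ maps to the single point $w$. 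Absent these two genericity arguments (or a genuine substitute), your case (1) is incomplete.
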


\begin{rem}
In the case of two periodic points and the case of two points that are identified under $\piX$, the converse - i.e. that any such points $x_1,x_2$ must be finitely blocked - is false. However, in the third case, the converse follows  from Lemma~\ref{L:involution-blocking}, which shows that $x_1$ and $x_2$ are finitely blocked by the preimages under $\piX$ of fixed points of the involution on $X_{min}$. In particular, a minimal blocking set is contained in the set of periodic points in this case.
\end{rem}

\begin{proof}[Proof of Theorem \ref{T:blockingsets}]
If both $x_i$ are periodic, the result follows from Lemma \ref{L:MB}. So assume at least one of $x_1$ or $x_2$ is not periodic. 

 Theorem \ref{T:cor} gives that $\piQ (x_1)=\piQ (x_2)$, and Lemma \ref{L:MB} and Theorem \ref{T:main} give that the blocking sets consists of periodic points and the $\piQ $ fiber of $x_1$. This proves the result if $\piX(x_1)\neq \piX(x_2)$. 

So assume $\piX(x_1)=\piX(x_2)$. Any line segment from $x_1$ to $x_2$ maps under $\piX$ to a periodic line on $\Xmin$. Moving $x_1$ slightly, we can assume that $\piX(x_1)$ is not on the central core curve of any cylinder. Hence any image of $\piX(x_1)$ under the involution is not on one of these periodic lines through $\piX(x_1)$, so we may assume that $B'$ maps to $\piX(x_1)$. (By Theorem \ref{T:main}, every point of $B'$ maps to either $\piX(x_1)$ or its image under the involution (if there is an involution)). 

Similarly, moving $\piX(x_1)$ slightly we can assume it does not lie on any of the countably many periodic lines through periodic points on $\Xmin$, and so we get that $B$ contains no periodic points. 
\end{proof}

For  developments on the illumination and finite blocking problems subsequent to the completion of the present paper, see \cite{ApisaWrightGemini,Wolecki}.

%
%

\section{Background}\label{S:background}

Here we recall some background that will be used in the rest of the paper. 

\subsection{Affine invariant submanifolds.}
Given an affine invariant submanifold $\M$ and a point $(X, \omega)$ in $\M$ the tangent space\footnote{Formally, an affine invariant submanifold is a properly immersed submanifold in the stratum, and the image of this immersion may have self-crossings. At such a self-crossing, the tangent space depends on not just the surface $(X, \omega)$ in the stratum but also a point in the abstract manifold $\cM$. See \cite{LNW} for more details. For notational simplicity we will use notation adapted to the case when the image of $\cM$ has no self-crossings and hence $\cM$ can be identified with its image in the stratum.}  $T_{(X, \omega)} \M$ is naturally identified with a subspace of $H^1(X, \Sigma; \bC)$ where $\Sigma$ is the zero set of $\omega$. Let $p: H^1(X, \Sigma; \bC) \to H^1(X; \bC)$ be the natural map from relative to absolute cohomology. The rank of $\cM$ is defined as $\rank(\cM)=\frac12 \dim_\bC p(T_{(X,\omega)} \cM)$ for any $(X,\omega)\in \cM$. This is an integer by work of Avila-Eskin-M\"oller \cite{AEM}. 

The affine field of definition $\bk(\cM)$ of $\cM$ is the smallest subfield of $\bR$ such that $\cM$ can locally be defined by linear equations in period coordinates with coefficients in this field \cite{Wfield}. It is an algebraic extension of $\bQ$ of degree at most $\deg(\bk(\cM))\leq g$, where $g$ is the genus. 

We will use the matrices 
$$u_t=\left(\begin{array}{cc} 1&t\\0&1\end{array}\right), \quad \quad a_t=\left(\begin{array}{cc}1&0\\0& e^t\end{array}\right), \quad \quad
r_t=\left(\begin{array}{cc} \cos(t)&-\sin(t)\\\sin(t)&\cos(t)\end{array}\right).$$

We will refer to a cylinder on a translation surface $(X,\omega)$ together with a choice of orientation of its core curve as an oriented cylinder. Given a collection of parallel oriented cylinders, we will say they are consistently oriented if the holonomies of $\omega$ along the oriented core curves are positive multiples of each other. 

Given an oriented cylinder $C$ on a translation surface $(X,\omega)$,  we define $u_t^C(X,\omega)$ and $a_t^C(X,\omega)$ to be the result of the following process. Rotate $(X,\omega)$ so that $C$ becomes horizontal and the orientation is in the positive real direction, apply $u_t$ or $a_t$ respectively to just $C$ and not to the rest of the surface, and then apply the inverse rotation. Given a collection $\cC=\{C_1, \ldots, C_k\}$ of parallel consistently oriented cylinders, define $u_t^{\cC}(X,\omega)=u_t^{C_1} \circ \cdots \circ u_t^{C_k}(X,\omega)$ and $a_t^\cC(X,\omega)=a_t^{C_1} \circ \cdots \circ a_t^{C_k}(X,\omega)$. We refer to $u_t^{\cC}$ as the cylinder shear and $a_t^\cC$ as the cylinder stretch. Typically, either a choice of orientation for the cylinders will be clear, or else either choice will be equally good. 

Let $\cM$ be an affine invariant submanifold. We say that two cylinders $C_1, C_2$ on a surface $(X, \omega)\in \cM$ are $\cM$-parallel if they are parallel and remain parallel on nearby\footnote{If $\cM$ has self-crossings, then one considers only deformations arising from a neighbourhood in the abstract manifold $\cM$.} surfaces in $\cM$. These definitions were introduced in \cite{Wcyl}, where the following is shown. 

\begin{thm}[Cylinder Deformation Theorem]\label{T:CDT}
If $\cC$ is an equivalence class of $\cM$-parallel cylinders on $(X,\omega)\in \cM$, then $u_t^{\cC}\circ a_s^\cC(X,\omega)\in \cM$ for all $s,t\in \bR$. 
\end{thm}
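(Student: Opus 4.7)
The approach is to pass to the infinitesimal formulation, encode the cylinder deformation as an explicit cohomology class, and show that class lies in $T_{(X,\omega)}\cM$ by decomposing a known $SL(2,\bR)$-tangent direction along equivalence classes of $\cM$-parallel cylinders. First I would reduce to a tangent-space statement: since $\cM$ is locally cut out by real linear equations in period coordinates and $u_t^\cC \circ a_s^\cC$ depends smoothly on $(s,t)$, it suffices to show that at every surface along the path the two tangent directions $\partial_t u_t^\cC|_{t=0}$ and $\partial_s a_s^\cC|_{s=0}$ lie in the tangent space to $\cM$. These two vectors span a real two-plane in $H^1(X',\Sigma;\bC)$ that is closed under multiplication by $i$ (they are the $SL(2,\bR)$-tangent directions restricted to $\bigcup \cC$), so by $SO(2)$-invariance of $T\cM$ it is enough to exhibit the shear direction.

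Next I would set up the cohomological encoding. After rotating so that $\cC$ is horizontal and consistently oriented, for each horizontal cylinder $C$ on $(X,\omega)$ let $\sigma_C \in H^1(X,\Sigma;\bR)$ be the cocycle evaluating to $h_C$ on any relative class crossing $C$ upward once, and to zero on classes disjoint from the interior of $C$. The desired infinitesimal shear of $\cC$ is then $i\sigma_\cC$ with $\sigma_\cC = \sum_{C \in \cC} \sigma_C$. Summing over \emph{all} horizontal cylinders of $(X,\omega)$ produces $\sigma_{\mathrm{tot}}$, and $i\sigma_{\mathrm{tot}}$ is the tangent at $t=0$ of the standard horocycle flow $u_t(X,\omega)$, which lies in $T_{(X,\omega)}\cM$ by $GL(2,\bR)$-invariance. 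The remaining task is to promote this total-shear membership to a membership for each individual equivalence class.

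The crux of the argument is this splitting. Writing the horizontal cylinders of $(X,\omega)$ as a disjoint union $\bigsqcup_{k=1}^n \cC_k$ of $\cM$-parallel equivalence classes, one has $\sigma_{\mathrm{tot}} = \sum_k \sigma_{\cC_k}$. By the definition of $\cM$-parallel, core curves in distinct equivalence classes have holonomies that are not forced to remain proportional on $\cM$, so I would construct nearby surfaces in $\cM$ on which the relative holonomies of cylinders in different classes move independently while those within a single class stay locked; intersecting the resulting affine constraints should force each $\sigma_{\cC_k}$ individually to lie in $T\cM$. Making this last step rigorous is the main obstacle, and is where one would either use ergodicity of the horocycle flow on $\cM$ to produce genuinely independent deformations, or invoke the structure of the field of definition $\bk(\cM)$ together with a Galois-equivariance argument identifying the $\cM$-parallel equivalence classes as $\Gal$-orbits of horizontal cylinders, so that individual summands split off from the $\bk(\cM)$-rational span of $\sigma_{\mathrm{tot}}$. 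Once $i\sigma_\cC \in T_{(X,\omega)}\cM$ is established, integration of the resulting vector field tangent to $\cM$ recovers $u_t^\cC \circ a_s^\cC(X,\omega) \in \cM$ for all $s,t \in \bR$.
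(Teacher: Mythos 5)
First, note that the paper does not prove Theorem \ref{T:CDT} at all: it is quoted from \cite{Wcyl}, so your attempt has to be measured against Wright's original argument rather than anything in this text. Your framing is reasonable up to a point: reducing to a tangent-space statement (open/closed argument along the path, using that $\cM$ is closed and locally linear in period coordinates) is fine, and passing between the shear and stretch directions uses only that $T_{(X,\omega)}\cM$ is a $\bC$-linear subspace, which is legitimate. But there are two genuine gaps. The smaller one: $i\sigma_{\mathrm{tot}}$ is the derivative of the horocycle flow only when $(X,\omega)$ is horizontally periodic. In general the derivative of $u_t$ is the class $\gamma\mapsto \Im\int_\gamma\omega$, which has contributions from the complement of the horizontal cylinders, so even your starting membership ``sum over all horizontal cylinders lies in $T\cM$'' is not established by $GL(2,\bR)$-invariance alone.

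The larger one is the step you yourself flag as the main obstacle: splitting $\sigma_{\mathrm{tot}}=\sum_k\sigma_{\cC_k}$ and concluding that each summand lies in $T\cM$. That splitting \emph{is} the theorem, and neither of your proposed routes closes it. Deforming within $\cM$ so that inequivalent cylinders become non-parallel is indeed possible (and is used in \cite{Wcyl}), but after such a deformation the other classes are no longer horizontal, so the identity you wanted to decompose is no longer available at the deformed surface; knowing the holonomies ``can move independently'' does not, by linear algebra, isolate $\sigma_{\cC_k}$ at the original surface. The Galois/field-of-definition route is worse: the statements you would invoke (that $\cM$-parallelism is governed by $\bk(\cM)$, that ratios of circumferences of $\cM$-parallel cylinders lie in $\bk(\cM)$, etc.) are in the literature consequences of the Cylinder Deformation Theorem or of the same machinery, so this is circular; and $\cM$-parallel classes are not in general Galois orbits of cylinders. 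Wright's actual proof does quantitative work with the $SL(2,\bR)$ action: a limiting argument along the geodesic and horocycle flows, together with nondivergence/recurrence and the closedness of $\cM$, first produces \emph{some} nontrivial tangent deformation supported on a single equivalence class, and a separate argument upgrades this to the standard shear and stretch. None of that analytic content is present in the proposal, so as written it does not constitute a proof.
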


If $\cC$ is as above and contains a saddle connection perpendicular to the core curves, we define the ``collapse" of $\cC$ to be the limit of $a_s^\cC(X,\omega)$ as $s\to -\infty$. The condition that $\cC$ contains a perpendicular saddle connection connection is equivalent to the surface degenerating as $t\to -\infty$, and here we take the limit in the partial compactification described in \cite{MirWri}. If there is a unique $t_0$ (up to Dehn twists) so that $u_{t_0}^{\cC}(X,\omega)$ contains a saddle connection in $\cC$ perpendicular to the core curves, for example if $\cC$ is a single simple cylinder, then we define the cylinder collapse to be the limit of $a_s^\cC u_{t_0}^{\cC}(X,\omega)$ as $s\to -\infty$. (Recall a simple cylinder is one for which each boundary is a single saddle connection.) 

Cylinder deformations apply equally well to translations surfaces $(X, \omega, S)$ with marked points $S$. If we write $$(X', \omega', S')=\lim_{s\to -\infty} a_s^\cC(X,\omega, S),$$ then the set $S'$ may have a different size than $S$. In particular, $S'$ maybe be non-empty even when $S$ is empty \cite{MirWri}. In general, $(X', \omega', S')$ might also have multiple components, however in all instances in this paper it will have only a single component.

\subsection{Finiteness of periodic points.}\label{SS:EFW}
We now explain  why Theorem \ref{T:periodic} is a special case of results in \cite{EFW}. All theorems in \cite{EFW} apply  to affine invariant submanifolds in $\cH^{*n}$, as well as those in strata without marked points. If $\cN$ is a $\cM$-periodic point, it is in particular an affine invariant submanifold of $\cM^{*1}$ (the preimage of $\cM$ in $\cH^{*1}$). All of $\cM, \cM^{*1},$ and $\cN$ have the same rank. By assumption, we have that $\cM^{*1}$ is either higher rank (i.e. rank greater than 1) or that the degree of affine field of definition is greater than 1 (or both), since otherwise $\cM$ would consist of torus covers. By \cite[Theorem 1.5]{EFW}, such an affine invariant submanifold cannot properly contain infinitely many affine invariant submanifolds of the same rank.

\subsection{Strata of quadratic differentials.}

\begin{lemma}\label{L:q-rank}
Let $\cQ(\kappa)$ where $\kappa = (k_1, \hdots, k_n)$ be a stratum of quadratic differentials. Let $m_{odd}$ be the number of odd numbers in $\kappa$ and $m_{even}$ the number of even numbers. Let $g$ be the genus of the Riemann surfaces on which the quadratic differentials lie. The rank and rel of the component is then
\[ \mathrm{rk}(\cQ) = g + \frac{m_{odd}}{2} - 1 \qquad \text{and} \qquad \mathrm{rel}(\cQ) = m_{even}. \]
\end{lemma}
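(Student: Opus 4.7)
The plan is to compute $\rank$ and $\mathrm{rel}$ through the canonical double cover, since these quantities for a stratum of quadratic differentials are by convention the rank and rel of the associated affine invariant submanifold $\tilde \cQ$ in a stratum of Abelian differentials. Fix $(X, \omega) \in \tilde \cQ$ lying over $(X/J, q) \in \cQ(\kappa)$, and let $\Sigma \subset X$ be the zero set of $\omega$.

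First I would work out the local structure of $\pi : X \to X/J$. A direct local computation (substituting $z = w^2$ near a zero of $q$) shows that $\pi$ is branched precisely at odd-order zeros of $q$: each odd-order zero $k_i$ has a single preimage, a fixed point of $J$ that is a zero of $\omega$ of order $k_i + 1$, while each even-order zero $k_j$ has two preimages swapped by $J$, each a zero of $\omega$ of order $k_j/2$. Applying Riemann--Hurwitz to $\pi$, branched at $m_{odd}$ points, gives $\tilde g = 2g - 1 + m_{odd}/2$ where $\tilde g$ is the genus of $X$.

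The tangent space $T_{(X,\omega)} \tilde \cQ$ is the $(-1)$-eigenspace of $J^*$ on $H^1(X, \Sigma; \bC)$, so I would take $(-1)$-eigenspaces throughout the exact sequence
\begin{equation*}
0 \to H^0(\Sigma; \bC)/H^0(X; \bC) \to H^1(X, \Sigma; \bC) \xrightarrow{p} H^1(X; \bC) \to 0,
\end{equation*}
which remains exact because $J$ has order $2$. Since $H^1(X; \bC)^+ \cong H^1(X/J; \bC)$ has complex dimension $2g$, one obtains $\rank(\tilde \cQ) = \tfrac{1}{2} \dim_\bC H^1(X; \bC)^- = \tilde g - g = g - 1 + m_{odd}/2$.

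For the rel, $\mathrm{rel}(\tilde \cQ)$ equals the complex dimension of the kernel of $p$ restricted to $T_{(X,\omega)} \tilde \cQ$, i.e.\ $\dim_\bC (H^0(\Sigma;\bC)/H^0(X;\bC))^-$. The $m_{odd}$ fixed points of $J$ on $\Sigma$ contribute only to the $(+)$-eigenspace of $H^0(\Sigma;\bC)$, while each of the $m_{even}$ swapped pairs contributes a one-dimensional antisymmetric summand to the $(-)$-eigenspace; since $H^0(X;\bC)$ lies in the $(+)$-eigenspace, this gives $\mathrm{rel}(\tilde \cQ) = m_{even}$. The only nontrivial step is the local ramification analysis; the rest is straightforward bookkeeping of $J^*$-eigenspaces.
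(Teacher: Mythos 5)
Your proof is correct, and it is essentially a fleshed-out version of the paper's two-line sketch, with one genuinely different ingredient. For the rank, both arguments reduce to the same fact: $\rank(\cQ)=\tilde g-g$, with $\tilde g$ computed from Riemann--Hurwitz applied to the canonical double cover branched over the $m_{odd}$ odd-order singularities; you additionally justify why the rank is the genus difference, via $H^1(X;\bC)^+\cong H^1(X/J;\bC)$ and the identification of the tangent space with the $(-1)$-eigenspace, which the paper simply asserts. For the rel, the paper instead quotes the dimension formula $\dim_\bC\cQ(\kappa)=2g-2+n$ from Kontsevich--Zorich and uses $\mathrm{rel}=\dim-2\,\rank$, whereas you compute $\ker(p)$ restricted to the tangent space directly: the $(-1)$-eigenspace of $H^0(\Sigma;\bC)/H^0(X;\bC)$ is spanned by the antisymmetric functions on the $m_{even}$ swapped pairs, giving $\mathrm{rel}=m_{even}$ without any citation. (Note that your computation, run to the end, also recovers $\dim_\bC = 2g-2+n$, so the two routes are consistent.) Two small points worth tightening: when $k_i=-1$ the single preimage is a regular point of $\omega$ that must be treated as a marked point rather than a zero --- harmless here, since fixed points of $J$ contribute only to the $(+)$-eigenspaces, so neither the rank nor the rel count is affected; and the exactness of the eigenspace sequence is really a characteristic-zero averaging statement, and since the paper defines rel as dimension minus twice the rank, your identification of rel with $\dim_\bC\ker(p|_T)$ deserves the one-line rank--nullity remark making the two definitions agree.
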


We define the rel to be the dimension minus twice the rank.

\begin{proof}[Proof sketch.]
The rank is the difference of the genera of surfaces in $\cQ$ and their double covers, which can be computed using Riemann-Hurwitz formula. See \cite[Section 2.1]{KZ} for the formula for $\dim \cQ$. 
\end{proof}

\begin{cor}\label{C:q-rank-one}
The only rank one strata of strictly quadratic differentials are $\cQ(-1^4)$, $\cQ(2,-1^2)$, and $\cQ(2^2)$. 
\end{cor}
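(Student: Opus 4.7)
The plan is to apply Lemma~\ref{L:q-rank} and enumerate directly. By that lemma, $\mathrm{rk}(\cQ)=1$ is equivalent to $g + m_{odd}/2 = 2$. Since $\sum k_i = 4g-4$ is even and the even-order entries already contribute even summands, $m_{odd}$ must itself be even. Combined with $g,m_{odd}\geq 0$, this leaves exactly three cases for $(g,m_{odd})$: namely $(0,4)$, $(1,2)$, and $(2,0)$.

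For each case I would list the partitions $\kappa=(k_1,\ldots,k_n)$ of $4g-4$ in which $m_{odd}$ of the $k_i$ lie in $\{-1,1,3,\ldots\}$ and the rest lie in $\{2,4,\ldots\}$. When $(g,m_{odd})=(0,4)$ the four odd entries must sum to $-4$, forcing $\kappa=(-1,-1,-1,-1)$ and yielding $\cQ(-1^4)$. When $(g,m_{odd})=(1,2)$ two odd entries sum to some $2m\leq 0$ and the even entries (each $\geq 2$) contribute $-2m\geq 0$; the only possibilities are $(-1,-1,2)$ and $(-1,1)$. When $(g,m_{odd})=(2,0)$ the entries are positive even integers summing to $4$, so $\kappa$ is either $(4)$ or $(2,2)$.

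Finally I would invoke the Masur--Smillie classification of empty strata of quadratic differentials, which asserts that the only empty strata are $\cQ(\emptyset)$ in genus one, $\cQ(1,-1)$, $\cQ(3,1)$, and $\cQ(4)$. Of the candidates produced above, exactly $\cQ(1,-1)$ and $\cQ(4)$ are empty, leaving the three strata $\cQ(-1^4)$, $\cQ(2,-1^2)$, and $\cQ(2^2)$ claimed in the corollary. The only non-trivial ingredient is citing the Masur--Smillie non-emptiness theorem; the rest is an elementary enumeration driven entirely by Lemma~\ref{L:q-rank} and the parity constraint $\sum k_i = 4g-4$.
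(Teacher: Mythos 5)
Your proof is correct and is essentially the argument the paper leaves implicit: the corollary is stated there without proof as a direct enumeration from Lemma~\ref{L:q-rank}, which is exactly what you carry out (the parity constraint, the three cases $(g,m_{odd})=(0,4),(1,2),(2,0)$, and the resulting candidate lists are all right). Your explicit appeal to the Masur--Smillie non-emptiness theorem to discard the two empty candidates $\cQ(1,-1)$ and $\cQ(4)$ is the correct way to finish, and it supplies the one step the paper takes for granted.
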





\subsection{Hat homologous saddle connections.}

Two saddle connections or cylinders on $(Q, q)\in \cQ$ are called hat homologous if they are parallel and remain so on all nearby surfaces in $\cQ$. 
 Configurations of hat homologous saddle connections were classified in \cite{MZ}; in particular,  two hat homologous cylinders must have  ratio of lengths in $\{\frac12, 1, 2\}$. 

We say that a quadratic differential is generic in a given direction if any two saddle connections in that direction are hat homologous. A cylinder in a half-translation surface is an isometric map of $\bR/(c\bZ) \times (0,h)$ into the surface. This always extends to a continuous map of $\bR/(c\bZ) \times [0,h]$ into the surface. The two boundary components of the cylinder are the images of $\bR/(c\bZ) \times \{0\}$ and $\bR/(c\bZ) \times \{h\}$. The multiplicity of a saddle connection on the component of the boundary  corresponding to $\bR/(c\bZ) \times \{0\}$ is the number of preimages of a point in this saddle connection in $\bR/(c\bZ) \times \{0\}$, and similarly for $\bR/(c\bZ) \times \{h\}$. This multiplicity is always 1 or 2. Define a simple cylinder to be one that has one saddle connection, with multiplicity one, in each of its boundaries.

We recall the following consequences of \cite[Theorems 1 and 2]{MZ}. 

\begin{prop}\label{P:MZ}
Suppose that $C$ is a cylinder in a generic direction on a quadratic differential. Then each boundary component of $C$ consists of either 
\begin{enumerate}
\item one saddle connection with multiplicity one, 
\item one saddle connection with multiplicity two, or
\item  two saddle connections, each with multiplicity one. 
\end{enumerate}
In the last case, removing the two saddle connections disconnects the surface, and the component not containing $C$ has trivial linear holonomy.  

\begin{figure}[h!]
\includegraphics[width=.3\linewidth]{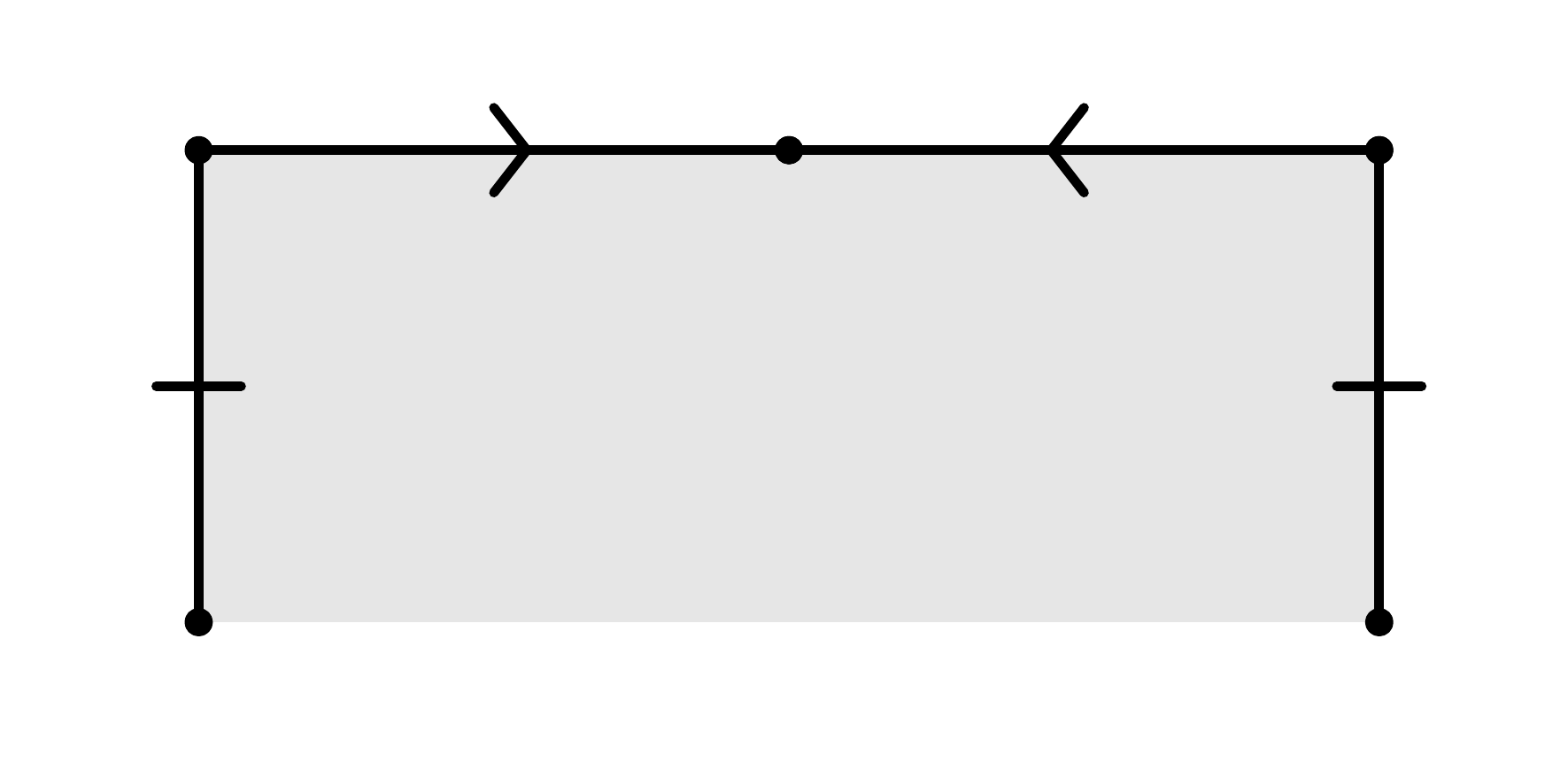}
\caption{The case of a multiplicity two saddle connection in Proposition \ref{P:MZ}.}
\label{F:MZ}
\end{figure}

Furthermore if $C$ shares a boundary saddle connection with another cylinder $C'$, then possibly after switching $C$ and $C'$ we have that $C'$ is simple and does not share a boundary saddle connection with any other cylinder, and $C$ has two saddle connections in the given boundary component as in case (3) above.  
\begin{figure}[h!]
\includegraphics[width=.9\linewidth]{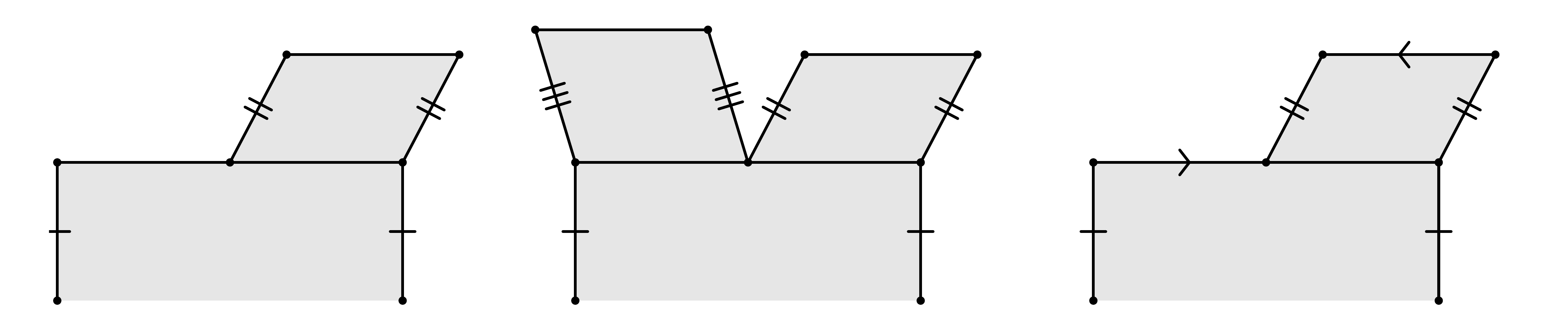}
\caption{The left and right images indicate the two possible configurations of $C$ and $C'$ in Proposition \ref{P:MZ}. The middle images reminds us that there may also be another cylinder adjacent to $C$. }
\label{F:3hat}
\end{figure}
 \end{prop}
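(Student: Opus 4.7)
The plan is to deduce both parts of the proposition directly from the Masur--Zorich classification in \cite[Theorems 1 and 2]{MZ} of configurations of hat homologous saddle connections on a half-translation surface. The only real work is translating between their setup (which is phrased in terms of configurations of parallel saddle connections shrinking simultaneously) and ours (cylinder boundaries in a generic direction).

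First I would fix a cylinder $C$ in a generic direction and one of its boundary components $\partial_0 C$. By genericity, every saddle connection in this direction, in particular every saddle connection appearing in $\partial_0 C$, is hat homologous to every other, and all are hat homologous to the core curve of $C$. The total length of saddle connections in $\partial_0 C$, counted with the multiplicities defined in the excerpt, equals the circumference $c$ of $C$. Combined with the fact (already recalled in the text from \cite{MZ}) that the length ratio of two hat homologous cylinders lies in $\{1/2,1,2\}$, and similarly for hat homologous saddle connections forming a configuration of Masur--Zorich, the only numerical possibilities are: one saddle connection of length $c$ appearing with multiplicity one; one saddle connection of length $c/2$ appearing with multiplicity two; or two distinct saddle connections each of multiplicity one whose lengths sum to $c$. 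These are the three cases of the proposition.

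Next I would identify the structural refinement in case (3) with the corresponding entry in the Masur--Zorich tables. In their classification, the configuration where two saddle connections together bound a single cylinder on one side is exactly the one where the two saddle connections separate the surface, with the side opposite $C$ being an invariant subsurface on which the linear holonomy representation becomes trivial. I would simply cite this, noting that it is forced because parallel transport across each of the two saddle connections reverses direction, so going around $C$'s complement one passes through both of them and the holonomy becomes trivial there. This yields the asserted disconnection statement.

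For the ``furthermore'' clause, I would appeal to \cite[Theorem 2]{MZ}, which enumerates the ways in which two cylinders in a hat homologous family can be adjacent across a common boundary saddle connection. In each such configuration one of the two cylinders is simple, is adjacent to no other cylinder, and the other (here $C$) necessarily has a boundary of the type described in case (3). The main obstacle is therefore bookkeeping: matching our cases (1)--(3) and our notion of ``multiplicity'' of a saddle connection in a cylinder boundary against the pictorial tables in \cite{MZ}, and checking that their ``type of newborn cycle'' dictionary gives exactly the three cases with no additional possibilities surviving the genericity hypothesis.
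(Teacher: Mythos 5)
The paper itself offers no written argument here: Proposition \ref{P:MZ} is stated as a recalled ``consequence of \cite[Theorems 1 and 2]{MZ}'', so your overall strategy --- translate the cylinder-boundary statement into the Masur--Zorich language of configurations of hat homologous saddle connections and read the cases off their classification --- is exactly the paper's approach, and citing \cite{MZ} for the trichotomy, the separation/trivial-holonomy statement, and the adjacency (``furthermore'') clause is legitimate.

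However, the intermediate ``numerical'' derivation of cases (1)--(3) in your second paragraph does not work as stated, and you should not present the trichotomy as a consequence of length bookkeeping. First, the ratio fact $\{\tfrac12,1,2\}$ quoted in the paper is about hat homologous \emph{cylinders}; the core curve of $C$ is a closed curve, not a saddle connection, so ``all are hat homologous to the core curve of $C$'' is not the right notion, and the lengths of the boundary saddle connections are not a priori constrained to these ratios relative to the circumference $c$. Second, even granting that any two hat homologous saddle connections have length ratio in $\{\tfrac12,1,2\}$, the numerology does not exclude a boundary component made of three or more saddle connections --- for instance three distinct saddle connections of length $c/3$ each with multiplicity one, or lengths $c/2,c/4,c/4$, pass your test. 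The statement that each boundary component of a cylinder in a configuration carries at most two saddle connections (with the multiplicities as listed) is itself part of the structural description of configurations in \cite{MZ} and must be cited, not derived from lengths. Similarly, the ``parallel transport reverses direction across each saddle connection'' heuristic is not a proof of trivial holonomy of the complementary component; that, too, is part of the configuration data in \cite{MZ}, and the citation is what carries the argument. With those two paragraphs replaced by a direct appeal to the configuration list (which your final paragraph essentially already does), the proposal coincides with the paper's treatment.
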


\subsection{Primitivity}

A half-translation covering map from a quadratic differential $(Q, q)$ to a quadratic differential $(Q',q')$ is defined to be a branched covering $f: Q \to Q'$ of Riemann surfaces such that $q = f^*(q')$. So the definition is essentially identical to the definition of translation covering map, but the word ``half" reminds us that the domain might not be an Abelian differential.

\begin{lemma}\label{L:R-H}
The generic element of a component of a stratum of Abelian or quadratic differentials admits a non-bijective half-translation cover to another translation or half-translation surface if and only if the component is hyperelliptic. If the component is hyperelliptic and has rank bigger than 1, then the quotient by the hyperelliptic involution is the only such map.
\end{lemma}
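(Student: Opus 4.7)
The plan is to handle the ``if'' direction directly and to reduce the ``only if'' direction to a rank/dimension argument. For the easy direction: in a hyperelliptic component, the hyperelliptic involution $\iota$ on the underlying Riemann surface of each element gives a non-bijective degree-$2$ cover to $\bP^1$ (using $\iota^*\omega = -\omega$ in the Abelian case and $\iota^*q = q$ in the quadratic case); for the genus-$1$ Abelian stratum, any torus $\bC/\Lambda$ admits non-bijective covers $\bC/\Lambda \to \bC/\Lambda'$ for each finite-index overlattice.

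For the ``only if'' direction, I would start by observing that a non-bijective half-translation cover $\pi: X \to Y$ present generically in a component $\cM$ must persist throughout $\cM$: the combinatorial type of the cover is locally constant, so the set of surfaces admitting a cover of the fixed type is open and $GL(2,\bR)$-invariant, hence an affine invariant submanifold of full dimension equal to $\cM$. Passing to canonical Abelian double covers when needed, this reduces to a translation cover $\tilde\pi: \tilde X \to \tilde Y$ of degree at least $2$ between Abelian differentials. The key observation is that every tangent direction to $\cM$ is a $\tilde\pi$-pullback, so
\[ p(T_{(X,\omega)}\cM) \subseteq \tilde\pi^* H^1(\tilde Y; \bC), \]
the right-hand side having complex dimension $2g_{\tilde Y}$.

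In the case where $\cM$ is a full component of an Abelian stratum, $p(T\cM) = H^1(X; \bC)$ has dimension $2g_X$, so $g_X \leq g_{\tilde Y}$; combined with the Riemann--Hurwitz inequality $g_X \geq g_{\tilde Y}$, this forces $g_X = g_{\tilde Y}$, and then either $\tilde\pi$ is bijective or we are in the unramified torus case $g_X = 1$. In the bijective case, $X = \tilde Y$ is the canonical double cover of $Y = X/J$, so $X$ carries an involution $J$ with $J^*\omega = -\omega$, placing $\cM \subseteq \tilde{\cQ}$ for some quadratic component $\cQ$. A direct Riemann--Hurwitz calculation should then pinpoint $\cM = \tilde{\cQ}$ to happen exactly when the underlying quadratic stratum is $\cQ(-1^4)$, $\cQ(2g-3, -1^{2g+1})$, or $\cQ(2g-2, -1^{2g+2})$, whose lifts are respectively the torus stratum $\cH(\emptyset)$ and the hyperelliptic components $\cH^{hyp}(2g-2)$ and $\cH^{hyp}(g-1, g-1)$. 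Uniqueness of the hyperelliptic involution in the hyperelliptic case would follow from M\"oller's theorem \cite{M2} on minimal translation covers, combined with the fact that a generic hyperelliptic Riemann surface has no non-trivial automorphism other than the hyperelliptic involution.

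The hard part will be the quadratic case, where $\cM$ is a full component of a quadratic stratum so that the lifted $\tilde{\cM} \subset \cH$ is only a proper affine invariant submanifold of its Abelian stratum and the rank argument above does not immediately close. Here I expect to use the refined rank formula $\mathrm{rk}(\cM) = g_X + m_{odd}/2 - 1$ from Lemma \ref{L:q-rank}, which gives $\mathrm{rk}(\tilde{\cM}) = g_{\tilde X} - g_X$, together with the constraint that the lifted cover $\tilde\pi$ must intertwine the canonical deck involutions on both $\tilde X$ and $\tilde Y$; the additional structure from this equivariance should restrict the numerical possibilities enough to again isolate the hyperelliptic case.
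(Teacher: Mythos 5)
Your ``if'' direction and your treatment of the Abelian case are fine, and the Abelian argument is a genuinely different route from the paper's: the paper first shows that a generic surface has a saddle connection parallel to no other saddle connection, which forces any half-translation cover to have degree $2$, and then quotes the Kontsevich--Zorich/Lanneau dimension count; you instead derive the degree constraint cohomologically, from $p(T\cM)\subseteq \tilde\pi^*H^1(\tilde Y;\bC)$ together with Riemann--Hurwitz, and only then fall back on the same enumeration of strata. (Two small imprecisions there: the locus of covers of a fixed combinatorial type is \emph{closed} in the stratum, being an affine invariant submanifold, not open --- the right statement is that it contains a dense subset of the component and hence equals it, after a countability argument over combinatorial types; and the ``direct Riemann--Hurwitz calculation'' you defer is exactly the dimension count of \cite{KZ}.)

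The genuine gap is the quadratic case, which you explicitly leave as an expectation, and your proposed tools do not suffice as stated. When $\cM$ is a component of a stratum of quadratic differentials, $p(T\tilde\cM)$ is only the anti-invariant part $H^1_-(\tilde X;\bC)$, so the containment in $\tilde\pi^*H^1(\tilde Y;\bC)$ yields only an inequality of the form $g_{\tilde X}-g_X\le g_{\tilde Y}-g_Y$; this is an equality of ranks that is actually \emph{satisfied} by the hyperelliptic covering constructions, so rank considerations (Lemma~\ref{L:q-rank}) plus deck-involution equivariance cannot by themselves ``isolate the hyperelliptic case''. What is missing is (i) a bound on the degree of the cover --- in the paper this is the key step, obtained uniformly for Abelian and quadratic differentials from the generic saddle connection parallel to no other, and nothing in your cohomological argument replaces it in the quadratic setting --- and (ii) the full dimension count (rank \emph{and} rel, i.e.\ the base stratum together with marked branch points) as in Lanneau's determination of hyperelliptic components, which is what actually pins down the admissible singularity patterns. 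In addition, the uniqueness assertion of the lemma (that for a generic surface in a hyperelliptic quadratic component the hyperelliptic quotient is the \emph{only} such map) is not addressed at all in your quadratic sketch, and it is needed later in the paper (Corollary~\ref{C:nothypbig}, Lemma~\ref{L:s-not-fixed}). As written, the proposal proves roughly half of the lemma and records a hope for the half the paper actually uses most.
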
 

This is related to, but stronger than, \cite[Theorem 1.1]{MAff}. Since we are not aware of a proof of the lemma in the literature, we sketch one way that the lemma can be verified. 

\begin{proof}
All rank 1 strata are hyperelliptic, so we assume the stratum has rank at least 2. In particular, the generic surface in the stratum doesn't cover a surface in $\cQ(-1^4)$. 

Suppose the generic element $(Q, q)$ of a stratum $\cQ$ is a cover of $(Q_0,q_0)\in \cQ_0 \neq \cQ(-1^4)$ of degree $d>1$. It follows that every element of $\cQ$ is a cover of a surface in $\cQ_0$. 

We define an envelope to be a cylinder that has one boundary that consists of one  saddle connection of multiplicity two; see Figure \ref{F:MZ}. An envelope is simple if it also has one boundary that only contains a multiplicity one saddle connection.

We first claim $d=2$. It is possible to find a surface $(Q_0, q_0)\in \cQ_0$ with a  cylinder $C_0$ that is a simple cylinder or a simple envelope; for example, this follows from Lemma \ref{L:firstcyl}. We can assume that $C_0$ doesn't contain any branch points by simply moving them out of $C_0$. 

In particular $C_0$ has a zero (that isn't a pole) on at least one side. That side must consists of a single multiplicity one saddle connection, and its preimage consists of $d$ saddle connections of the same length. 

Suppose first that $C_0$ is a simple cylinder. In that case, there can't be more than one cylinder in the preimage, because cylinders in the preimage must have equal height and generic surfaces in a stratum don't have cylinders of the same height. So we get that the preimage is a single cylinder. This single cylinder has $d$ saddle connections on each side, giving $d=2$ by Proposition \ref{P:MZ}.

The case where $C_0$ isn't simple is a bit more subtle because a component of the preimage of $C_0$ can have twice the height of $C_0$. Using the arguments from before we reduce to the case where the preimage has one component, and is twice the height and twice the circumference, with two saddle connections on each side. Let $C'$ be any cylinder in the complement of the preimage. By Proposition \ref{P:MZ}, it can be seen to be simple. Repeating the argument using the image of $C'$ in the place of $C_0$  completes the proof that $d=2$. 

Given $d=2$, the lemma follows from a  Riemann-Hurwitz argument, using that if the generic element of one stratum covers an element of another, the dimension of the first stratum must be at least as large as the second, as in the determination of which strata have a hyperelliptic component \cite{KZ, LanneauHyp}. 
\end{proof}

\begin{rem}\label{R:Unique}
Given Corollary \ref{C:q-rank-one}, Lemma \ref{L:R-H} implies that the only genus $0$ stratum where every surface has an involution is $\cQ(-1^4)$. Lemma \ref{L:R-H} (plus a direct verification in $\cQ(2,2)$ and $\cQ(2,-1^2)$) shows that in every hyperelliptic stratum other than $\cQ(-1^4)$ and $\cH(\emptyset)$, the hyperelliptic involution is unique on a generic surface. (In genus zero or genus one a surface can have several hyperelliptic involutions.)
\end{rem}

%
%

\section{Proof of Theorem~\ref{T:Q}}\label{S:Q-overview}

Throughout the rest of this paper, $\cQ, \cQ'$, etc., will denote connected components of strata of quadratic differentials. Recall that point markings over strata of Abelian differentials are classified in \cite{Apisa}, so we make the following standing assumption for the remainder of the paper.  
\begin{ass}\label{A}
All strata of quadratic differentials considered will not consist  of squares of Abelian differentials.
\end{ass}

We begin by noting that we have already classified irreducible $n$-point markings with $n>1$, as a consequence of Theorem \ref{T:main} and Lemma \ref{L:R-H}. 

\begin{cor}\label{C:nothypbig}
Let $\cQ$ have rank bigger than 1. 
If $\cQ$ is not hyperelliptic, then there are no irreducible $n$-point markings with $n>1$. If $\cQ$ is hyperelliptic, the only such point markings occur when $n=2$ and the two points are interchanged by the involution. 
\end{cor}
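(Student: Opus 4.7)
The plan is to lift to $\tilde\cQ$ and apply Theorem~\ref{T:main} to a 2-point sub-marking, then identify $\piQ$ using Lemma~\ref{L:R-H}.

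Given an irreducible $n$-point marking $\cN$ over $\cQ$ with $n>1$ and marked points $s_1, \ldots, s_n$ on $(Q, q)$, each $s_i$ lifts to a $J$-orbit $\{p_i, Jp_i\}$ on $(X,\omega) = \tilde Q$. For $i \neq j$ I would consider the $GL(2,\bR)$-orbit closure $\cN_{ij}$ of $(X,\omega, \{p_i, p_j\})$ in $\tilde\cQ^{*2}$, and show that $\dim \cN_{ij} = \dim \tilde\cQ + 1$ by ruling out the extremes: dimension $\dim \tilde\cQ + 2$ would force $s_i, s_j$ to vary independently, reducing $\cN$, while dimension $\dim \tilde\cQ$ would make both $p_i, p_j$ $\tilde\cQ$-periodic, so $s_i, s_j$ would be $\cQ$-periodic and $\cN$ would again reduce as a product of periodic 1-point markings. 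An analogous argument rules out $\cN_{ij}$ being a product of a free and a periodic 1-point marking, so $\cN_{ij}$ is an irreducible 2-point marking. Since $\tilde\cQ$ has higher rank (hence is not a stratum of branched covers of tori), Theorem~\ref{T:main} together with Lemma~\ref{L:minquad} would then give $\piQ(p_i) = \piQ(p_j)$.

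The key step is identifying $\piQ$ on generic $(X, \omega) \in \tilde\cQ$. If $\cQ$ is not hyperelliptic, Lemma~\ref{L:R-H} applied to $\cQ$ shows the generic $Q$ has no non-trivial half-translation quotient, so $\Qmin = X/J$ and $\piQ$-fibers are the $J$-orbits of size $2$; hence $p_j \in \{p_i, Jp_i\}$, contradicting $s_j \neq s_i$, so no such $\cN$ exists. If $\cQ$ is hyperelliptic, the hyperelliptic involution $\iota_Q$ on $Q$ lifts to an involution $\iota$ of $X$ commuting with $J$ and also negating $\omega$; generically $\Qmin = X/\langle J, \iota\rangle$, so $\piQ$-fibers are the orbits $\{p, Jp, \iota p, J\iota p\}$ of size $4$. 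Since $p_j \notin \{p_i, Jp_i\}$, I would deduce $p_j \in \{\iota p_i, J\iota p_i\}$ and hence $s_j = \iota_Q s_i$. Applying this to the pairs $(1,2), (1,3)$ for $n > 2$ would then yield the impossible $s_2 = s_3$, forcing $n=2$ with marking $\{s, \iota_Q s\}$.

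The main obstacle is the identification of $\Qmin$ in the hyperelliptic case: rigorously showing $\Qmin = X/\langle J, \iota\rangle$ for generic $(X,\omega) \in \tilde\cQ$ requires a careful application of Lemma~\ref{L:R-H} to the stratum containing the further quotient $(X/J)/\iota_Q$, together with a genericity argument on $\tilde\cQ$ ruling out additional symmetries of $(X, \omega)$ beyond $\langle J, \iota\rangle$.
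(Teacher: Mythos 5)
Your overall plan (apply Theorem~\ref{T:main} and then identify $\piQ$ via Lemma~\ref{L:R-H}) is the same as the paper's, but the pairwise reduction you use to reach Theorem~\ref{T:main} has a genuine hole. You claim that if $\dim \cN_{ij} = \dim\tilde\cQ + 2$ (i.e.\ the pair $(p_i,p_j)$ is jointly free) then $\cN$ is reducible. For $n\ge 3$ this does not follow: reducibility requires splitting the \emph{entire} set $S$ into two independently varying subsets, and a single pair can be jointly unconstrained inside an irreducible marking when a third point is tied to both of them (model example over a torus-cover locus: mark $p_1,p_2$ freely and $p_3$ with $f(p_3)=f(p_1)+f(p_2)$; every pair is jointly free, yet the $3$-point marking is irreducible). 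Ruling out exactly this kind of configuration over a higher-rank $\cQ$ is part of what is being proved, so it cannot be extracted from irreducibility of $\cN$ alone, and with this step unjustified your pairwise argument gives nothing for such a marking. The clean fix is to avoid pairs entirely: lift the whole marking to the double cover to get a $2n$-point marking over $\tilde\cQ$ (it is irreducible, since a reducing partition separating some $p_i$ from $Jp_i$ would force $p_i$, hence $s_i$, to be periodic, which already makes $\cN$ reducible, and a $J$-compatible partition would descend to a reduction of $\cN$), and apply Theorem~\ref{T:main} directly to it; alternatively, first invoke Lemma~\ref{L:dimplus1} to get $\dim = \dim\tilde\cQ+1$, after which your pairwise analysis can be repaired. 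Your other two cases (both points periodic, or free plus periodic) are fine, since one periodic marked point already reduces $\cN$.

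The ``main obstacle'' you flag in the hyperelliptic case is not actually an obstacle and does not require applying Lemma~\ref{L:R-H} to the stratum of the further quotient. By the universal property in Lemma~\ref{L:minquad}, $\piQ$ factors through $X\to Q$, so the induced map $(Q,q)\to \Qmin$ is itself a half-translation covering of the generic $(Q,q)$; Lemma~\ref{L:R-H} applied to $\cQ$ then says it is either bijective or the hyperelliptic quotient, and since $X\to (Q,q)/\iota_Q$ is a map to a quadratic differential through which $\piQ$ must factor, $\deg\piQ\ge 4$, forcing $\Qmin=(Q,q)/\iota_Q$ with fibers on $Q$ exactly the $\iota_Q$-orbits (and $\Qmin=(Q,q)$ in the non-hyperelliptic case). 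This identification is precisely the one-line content of the paper's proof; as written, your proposal leaves it open and, more seriously, rests on the flawed pairwise dimension claim above.
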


Recall that, by Corollary \ref{C:q-rank-one}, the first requirement simply says that $\cQ$ is not $\cQ(-1^4)$, $\cQ(2,-1^2)$, or $\cQ(2^2)$.

\begin{proof}
One can rephrase Lemma \ref{L:R-H} as saying that if $(Q,q)$ is a generic element of a non-hyperelliptic $\cQ$, then $\Qmin=(Q,q)$, and otherwise $\Qmin$ is the quotient by the hyperelliptic involution. 
\end{proof}

The following proposition will provide the inductive step for our arguments. 
Recall that an envelope is a cylinder that has one boundary that consists of one  saddle connection of multiplicity two. An envelope is simple if it also has one boundary that only contains a multiplicity one saddle connection. 

In regards to Assumption \ref{A}, we remark that degenerating a simple cylinder or a simple envelope on a quadratic differential with non-trivial holonomy will not create a quadratic differential with trivial holonomy. (In contrast, degenerating a non-simple envelope may create a  quadratic differential with trivial linear holonomy.)

\begin{prop}\label{P:induct} 
Suppose that $(Q, q)\in \cQ$  has a simple cylinder $C$, and that degenerating $C$ gives $(Q', q', S)$, where  $(Q', q')\in \cQ'$ and $\cQ'$ does not have any periodic points. Then $\cQ$ does not have any periodic points. 

The same conclusion holds if $(Q,q)$ has a disjoint pair of simple envelopes, and degenerating either one similarly gives a $\cQ'$ without periodic points. 
\end{prop}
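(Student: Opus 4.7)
The plan is to argue the contrapositive: assume $\cQ$ does carry a periodic point, and produce a $\cQ'$-periodic point, contradicting the hypothesis. Let $\cN\subset \cH^{*1}$ be the corresponding $1$-point marking, so $\dim \cN=\dim \cQ$ and $\cN\to\cQ$ is finite. I would first pass to $(Q,q,p)\in \cN$ lying over the specific surface $(Q,q)\in \cQ$ provided by the hypothesis, i.e., one carrying the simple cylinder $C$ (or, in the envelope case, one of the two disjoint simple envelopes $E_1,E_2$); since $\cN\to\cQ$ is finite-to-one and surjective, such a lift exists.

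The central move is to apply the cylinder collapse to the marked surface $(Q,q,p)$: first twist by $u_{t_0}^C$ to create a saddle connection perpendicular to the core, then form $\lim_{s\to -\infty} a_s^C u_{t_0}^C(Q,q,p)$. By the Cylinder Deformation Theorem applied to $\cN$, this entire family stays in $\cN$, so the limit lies in a boundary affine invariant submanifold $\overline{\cN}$ of $\cN$ sitting in a marked stratum over $\cQ'$. The underlying unmarked surface converges to $(Q',q',S)\in (\cQ')^{*|S|}$ as specified by the hypothesis, and $p$ limits to some point $p'\in (Q',q')$. A dimension count for the degeneration of a single parallel class consisting of one simple cylinder (codimension one in the Mirzakhani--Wright partial compactification) gives $\dim_{\bC} \overline{\cN} = \dim_{\bC} \cN - 1 = \dim_{\bC} \cQ'$. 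Consequently, if $p'$ is a regular point of $(Q',q')$ avoiding $S$, then the image of $\overline{\cN}$ under the forgetful map $\cH^{*|S|+1}\to \cH^{*1}$ (which forgets the canonical collapse points $S$) is a $1$-point marking of $\cQ'$ of dimension $\dim \cQ'$---that is, a $\cQ'$-periodic point, yielding the desired contradiction.

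The main obstacle is therefore to verify $p'\notin S\cup \mathrm{zeros}(q')$. If $p\notin C$, the collapse acts as the identity outside $C$, so $p'$ is just $p$ viewed inside $(Q,q)\setminus C\subset (Q',q')$; since $p$ is neither a zero nor on the boundary saddle connections of $C$ (whose vertices are precisely the new marked points $S$), $p'$ automatically avoids $S$ and the zeros of $q'$. If instead $p\in C$, I would pre-apply a shear $u_t^C\in \cN$ before collapsing: the shear translates the horizontal position of $p$ inside $C$ by $ty$, where $y$ is the height of $p$, and the subsequent collapse deposits $p'$ at the horizontally-shifted location along the collapsed core curve. Since $S\cup \mathrm{zeros}(q')$ is finite, for generic $t$ the resulting $p'$ is a regular point outside $S$, and the periodic-point contradiction again goes through.

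For the envelope case, the reduction is even cleaner: since $E_1$ and $E_2$ are disjoint, the periodic point $p$ lies in at most one of them, so I can always choose to collapse whichever envelope does \emph{not} contain $p$, placing myself directly in the easy ``$p\notin C$'' subcase above. I expect the principal technical points to be (i) confirming the codimension-one dimension count for the Mirzakhani--Wright boundary in this geometry, and (ii) in the $p\in C$ subcase, verifying that varying the shear parameter $t$ genuinely moves $p'$ over a continuous locus on the collapsed core so that generic $t$ avoids the finite set $S\cup\mathrm{zeros}(q')$. Once those are in hand, the rest is a direct application of the Cylinder Deformation Theorem.
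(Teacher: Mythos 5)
Your overall strategy---collapse $C$ inside the marked locus and read off a $\cQ'$-periodic point on the boundary---is the paper's starting point too, but the step where you dispose of the case $p\notin C$ is exactly where the real difficulty sits, and your argument for it is wrong. You assert that since the collapse is supported in $C$, the point is unaffected and $p'=p$ is automatically a regular point of $(Q',q')$ avoiding $S$. That is only justified when $p$ lies outside the entire equivalence class $\cC$ of cylinders $\cQ$-parallel (hat-homologous) to $C$ (this is \cite[Lemma 4.6]{MirWri}, which the paper invokes for precisely this subcase). The marked point is not simply carried along by the geometric deformation: on each surface of the collapse path its location is dictated by the linear equations cutting out the periodic-point locus $\cN$, and these equations may couple the position of $p$ to the cross curve of $C$. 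Concretely, $p$ may lie in (or drift through) another cylinder $D\in\cC$, move inside the complement of $C$ along the collapse path, and converge to a zero, pole, or point of $S$ exactly in the limit---in which case the boundary surface carries no periodic point and no contradiction is obtained. This is the difficulty the paper flags explicitly, and ruling it out is the bulk of its proof: one shows that in this situation there is a saddle connection $v_2$ from a singularity to $p$ satisfying $v_2=c\,v_1$ for a cross curve $v_1$ of $C$ (Step 2), and then uses the Masur--Zorich classification of configurations (Proposition \ref{P:MZ}, Sublemmas \ref{SL:MZ1} and \ref{SL:MZ2}) to shear and stretch $C$ so as to move $p$ out of $\cC$, which is the contradiction. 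None of this has a counterpart in your proposal.

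Two further points. In the subcase $p\in C$, the ``generic shear'' fix does not work as stated: shearing within $\cN$ does not let you choose where $p'$ lands, since after each deformation the point sits wherever the equations of $\cN$ put it; the paper instead performs a full (respectively half) Dehn twist so that $C$ still contains a vertical saddle connection while $p$ avoids the vertical separatrices of $C$, and then collapses. In the envelope case, collapsing the envelope that does not geometrically contain $p$ does not place you in the easy subcase, for the same reason as above---the obstruction is the linear coupling through $\cC$, not containment in $C$; the actual role of the second envelope in the paper is different: for an envelope only a half Dehn twist is available, which can fail to move $p$ off the bad boundary position (Figure \ref{F:Shear1}), and one then switches to the other envelope. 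Your dimension count for the Mirzakhani--Wright boundary is fine in spirit, but it is not where the proposition's content lies.
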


We will defer this proof of Proposition \ref{P:induct} to the end of the section.  A periodic point for a stratum of quadratic differential is a point marking of the same dimension, as in the case of Abelian differential. Each periodic point for $\cQ$ can be lifted to one for $\tilde{\cQ}$. (The fibers of the periodic point double in size, and we do not comment on whether the new periodic point is irreducible.) 

In the next two sections we will prove the following two results, which we will use in our proof of Theorem~\ref{T:Q}. Note that the three strata that appear in the next theorem are exactly the higher rank strata of minimal dimension (they have dimension 4 and rank 2), and that all rank 1 strata are hyperelliptic. 

\begin{thm}\label{T:findcyl} 
If $\cQ$ is non-hyperelliptic, and $$\cQ\notin \{ \mathcal{Q}(3, -1^3), \mathcal{Q}(5, -1), \mathcal{Q}(1, -1^5)\},$$ then there exists $(Q, q)\in \cQ$ with a simple cylinder, or a pair of disjoint simple envelopes, such that degenerating any one of these cylinders gives $(Q', q', S)$, where $(Q', q')\in \cQ'$ and $\cQ'$ is non-hyperelliptic.
\end{thm}

\begin{thm}\label{T:basecase} 
$\mathcal{Q}(3, -1^3),\mathcal{Q}(5, -1)$ and $\mathcal{Q}(1, -1^5)$ do not have periodic points. 
\end{thm}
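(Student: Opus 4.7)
By Lemma \ref{L:q-rank}, each of $\cQ(3,-1^3)$, $\cQ(5,-1)$, and $\cQ(1,-1^5)$ has dimension $4$, rank $2$, and rel $0$. These are the smallest higher rank strata of strictly quadratic differentials, and any cylinder collapse from one of them necessarily lands in a stratum of rank at most $1$ or in an even smaller dimensional stratum. Such boundary strata may well carry periodic points, so the inductive step of Proposition \ref{P:induct} does not directly apply, and these three strata must be handled as base cases.

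The plan is to rule out periodic points in each stratum by direct cylinder deformation analysis, adapting the argument of Proposition \ref{P:induct} to the base-case setting. For each stratum I would first exhibit an explicit surface $(Q,q)\in\cQ$---these strata all admit very concrete polygonal models---whose horizontal direction is generic and whose horizontal cylinder decomposition is simple, consisting of cylinders of the types described in Proposition \ref{P:MZ}. Let $\cC$ be an equivalence class of $\cQ$-parallel horizontal cylinders on $(Q,q)$; by Theorem \ref{T:CDT}, cylinder shears and stretches of $\cC$ give a non-trivial two-parameter family of deformations inside $\cQ$.

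Assuming toward a contradiction that $(Q,q)$ carries a $\cQ$-periodic point $p$ with dense $\GL(2,\bR)$-orbit in its periodic point locus, I would reapply the three-step analysis from the proof of Proposition \ref{P:induct}. Step 1 reduces to the case that $p$ lies inside some cylinder $D\in\cC$; if $p$ were outside all horizontal cylinders, cylinder shears would fix $p$ and \cite{MirWri} would place $p$ as a marked periodic point on a boundary surface, a case that must be checked directly using the explicit description of the boundary. Step 2 forces the position of $p$ within $D$ to be rigidly controlled by a cross curve of some $C\in\cC$. Step 3 then uses shears and height changes of $C$, together with Proposition \ref{P:MZ} applied to the boundaries of $D$ and neighbouring cylinders as in Sublemmas \ref{SL:MZ1} and \ref{SL:MZ2}, to push $p$ out of $\cC$, giving a contradiction.

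The main obstacle is the loss of the inductive hypothesis: every possible combinatorial configuration of $\cC$ and every possible location of $p$ within it must be ruled out internally, without recourse to a boundary stratum. The saving grace is that these strata are low-dimensional enough that the list of cylinder decompositions in a generic direction is short and an exhaustive enumeration is tractable. For the two positive-genus strata $\cQ(5,-1)$ and $\cQ(3,-1^3)$ an alternative approach would be to pass to the orientation double cover, identify the resulting stratum of Abelian differentials, and invoke Apisa's classification \cite{Apisa}: if the double cover lies in a non-hyperelliptic component then \cite{Apisa} rules out periodic points immediately, and otherwise the candidate periodic points are Weierstrass points, whose images on $\cQ$ can be analyzed directly. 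This is likely the cleanest route in those two cases, leaving the genus-zero case $\cQ(1,-1^5)$ to be handled by the explicit cylinder analysis sketched above.
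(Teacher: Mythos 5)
There is a genuine gap, and it sits exactly where you put your main weight. Your ``cleanest route'' for $\cQ(3,-1^3)$ and $\cQ(5,-1)$ --- pass to the orientation double cover and invoke the classification in \cite{Apisa} --- does not work, because \cite{Apisa} classifies periodic points over \emph{connected components of strata} of Abelian differentials, whereas $\tilde{\cQ}(3,-1^3)$ and $\tilde{\cQ}(5,-1)$ are proper affine invariant (Prym-type) submanifolds of $\cH(4)$ (genus $3$, with marked points) and $\cH(6)$ (genus $4$), of dimension $4$ inside ambient strata of dimension $6$ and $8$. Periodic points are defined relative to the affine invariant submanifold $\cM$, and a proper submanifold can carry many periodic points that the ambient stratum does not (Weierstrass points over Teichm\"uller curves, the golden point, etc.), so Apisa's theorem for the ambient component says nothing about $\tilde{\cQ}$-periodic points. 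The one case where this reduction is legitimate is the one you propose to do by hand: $\tilde{\cQ}(1,-1^5)=\cH(2)$ is an entire stratum, so $\cQ(1,-1^5)$ follows immediately from \cite{Apisa}; this is in fact how the paper disposes of that case.

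Your fallback --- rerunning the three-step argument of Proposition \ref{P:induct} ``internally'' with an exhaustive enumeration of cylinder configurations --- is not a proof either, because that argument is not self-contained: Step 1 (and the endgame of Step 2) concludes by degenerating a cylinder and citing \cite{MirWri} to get a periodic point on a boundary surface, which is a contradiction only because the boundary stratum was \emph{assumed} to have no periodic points. For these three strata every such degeneration lands in a rank one or hyperelliptic locus where that hypothesis fails, so the mechanism that produces the contradiction is gone, and no amount of enumerating configurations of $\cC$ restores it. What the paper actually does for $\cQ(3,-1^3)$ and $\cQ(5,-1)$ is different in kind: it proves a rigidity statement (Lemma \ref{calc1}, a mild generalization of \cite[Lemma 6.1]{Apisa}) saying that a periodic point inside a horizontal cylinder crossed by two vertical equivalence classes with fixed rational moduli data must sit at a distinguished center, applies it to explicit square-tiled surfaces in the two Prym loci to cut the candidates down to finitely many points, and then kills the non-involution-fixed candidates by Dehn twists and, for $\cQ(5,-1)$, by deforming a parameter $a$ in a slope-one cylinder decomposition and using the rational-height criterion \cite[Lemma 5.5]{Apisa}. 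Some argument of that concrete, locus-specific type is needed to fill the hole in your proposal.
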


\begin{proof}[Proof of Theorem~\ref{T:Q}]  
By using induction on $\dim \cQ$, Proposition \ref{P:induct} and Theorems \ref{T:findcyl} and \ref{T:basecase} immediately give the result when $\cQ$ is not hyperelliptic. 

When $\cQ$ is hyperelliptic, every surface in $\cQ$ covers a surface in a genus 0 stratum $\cQ_0$, which is not hyperelliptic and has the same rank. Let $n$ be the number of points that are not zeros or poles over which these covering maps are branched. (One can show $n\in \{0,1, 2\}$.) Any $\cQ$-periodic point gives rise to a periodic point over $\cQ_0^{*n}$.  This can be considered as a point marking over $\cQ_0$, and in this point marking the point arising from the $\cQ$-periodic point is not free. By Corollary \ref{C:nothypbig} this means we get a $\cQ_0$-periodic point, which is a contradiction. 
\end{proof}

\begin{rem}\label{R:QvsAb}
Theorem \ref{T:findcyl} is false without Assumption \ref{A}. Namely, if $\cQ=\cH^{odd}(4)$, then $\cQ$ is not hyperelliptic, but every degeneration of $\cQ$ is a genus two stratum of Abelian differentials and hence  hyperelliptic. Thus our proof does not apply as written to strata of Abelian differentials. 

However, a variant of the proof of Theorem \ref{T:findcyl}  shows that in a connected component of a stratum of Abelian differentials of genus at least 3 other than $\cH^{odd}(4)$ there is a surface with a simple cylinder that may be degenerated to produce a translation surface in a nonhyperelliptic connected component. A new proof of the classification of periodic points for Abelian differentials then follows almost verbatim to the proof for quadratic differentials. We omit the arguments for Abelian differentials.  
\end{rem}

Now we will proceed with the proof of Proposition \ref{P:induct}.  The difficulty of the proof is that if we naively collapse $C$, it may be that as $C$ decreases in size, the periodic point converges to a zero or pole, so that on the limit there is no periodic point. Notice that the assumptions imply that $\cQ$ is not rank one. 

\begin{proof}[Proof of Proposition \ref{P:induct}]
We will handle both cases simultaneously. In the first case, $C$ is the given simple cylinder, and in the second case, we let $C$ be either envelope. 
Suppose to a contradiction that $p$ is a $\cQ$-periodic point on $(Q, q)$. Let $\cC$ be the equivalence class of cylinders $\cQ$-parallel to $C$, and suppose without loss of generality that $\cC$ consists of horizontal cylinders and that the horizontal direction is generic. Our analysis will place increasingly strong constraints on $p$, until eventually we reach a contradiction. 

Suppose without loss of generality, after shearing the surface, that $C$ contains a vertical saddle connection. Recall we have defined the collapse of $C$ to be the limit of $a_s^C(Q,q)$ as $s\to -\infty$. Here it will be important that this limit can be thought of as limit of the path $a_{\log(t)}^C(Q,q)$, which is linear in period coordinates as $t$ ranges from $e$ to $0$. We refer to this whole path as the collapse path of $C$. 

The proof will proceed in three steps. First,  we will show that it suffices to show that we can move the marked point out of $\cC$. Second, we will show that if we can't collapse $C$ without $p$ merging with a singularity then the position of $p$ is entirely controlled by $C$, in a precise sense specified at the beginning of step 2. Finally, we will show that if the position of $p$ is entirely controlled by $C$ then $p$ can be moved out of $\cC$. 

\noindent \textbf{Step 1: $p$ must belong to a cylinder in $\cC$.} We claim that if $p$ is not contained in the interior or boundary of a cylinder of $\cC$, then its position in the complement of $\cC$ is unchanged by any cylinder deformation of $\cC$. To see this, first slightly  change the heights of $\cC$ so there is no rational relation between the moduli of the cylinders in $\cC$. By the Cylinder Deformation Theorem (Theorem \ref{T:CDT}), we know we can equally twist the cylinders in $\cC$ without changing the position of $p$. It follows from one of the easiest ingredients in the proof of Theorem \ref{T:CDT}, namely \cite[Corollary 3.4]{Wcyl}, that we can individually  twist each cylinder in $\cC$ without changing the position of $p$, because this twist is a limit of the deformations of $\cC$ guaranteed to exist by the Cylinder Deformation Theorem. See also \cite[Lemma 4.6]{MirWri}. The claim is proved.

Hence, if $p$ is disjoint from the closure of the cylinders in $\cC$, we may collapse $C$ and pass to a boundary surface $(Q', q', S\cup \{p\})$, where  $(Q', q')\in \cQ'$ and $\cQ'$ does not have any periodic points. Mirzakhani-Wright~\cite{MirWri} gives that $p\in (Q', q', S\cup \{p\})$ is not free, and by assumption $p$ cannot be a $\cQ'$ periodic point. But since $\cQ'$ does not have periodic points, $\cQ'$ cannot be hyperelliptic,  so Corollary \ref{C:nothypbig} implies that $p$ is either free or a $\cQ'$-periodic point, giving a contradiction.

Suppose now that $p$ lies on the boundary of $C$ for all time as $C$ collapses. Then we proceed as follows. If $C$ is a simple cylinder, then we proceed with the collapse and arrive at the same contradiction as before. If $C$ is an envelope, then it may not be possible to collapse $C$ without causing $p$ to coincide with a singularity on the boundary, see Figure~\ref{F:Shear1} (bottom). In this case, we relabel the cylinders so that the second envelope is labeled $C$.  (This is why we require two envelopes.) 

\begin{figure}[h!]
\includegraphics[width=.8\linewidth]{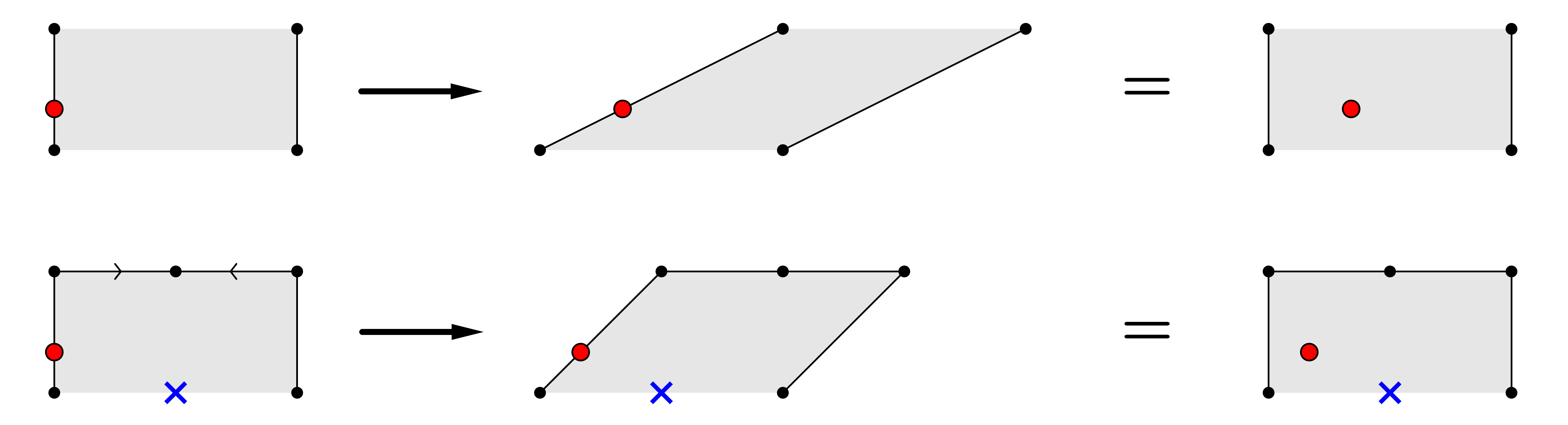}
\caption{Top: A full Dehn twist in a simple cylinder. Bottom: A half Dehn twist in an envelope. The bad position on the boundary is marked with an x. }
\label{F:Shear1}
\end{figure}

The proof is similar but easier if $p$ is on the boundary of another cylinder in $\cC$ for all time as $C$ collapses.

\noindent \textbf{Step 2: $C$ determines the position of $p$.}

We may now suppose that $p$ is contained in the interior of a cylinder $D\in \cC$, and either $D=C$ or the position of $p$ in the complement of $C$ is not constant along the collapse path. We will show that we may assume that there is a saddle connection joining $p$ to a zero on the boundary of $D$ whose holonomy is a fixed real multiple of a cross curve of $C$. We will also show $C\neq D$. Thus, this step could be more completely described as ``$D\neq C$, and $C$ determines the position of $p$ in $D$."

Shear $(Q, q)$ so that $p$ does not lie on a vertical separatrix (this could easily cause $C$ to no longer contain a vertical saddle connection). 
Recall the collapse path is defined to start at $t=e$ and end at $t=0$. Because we have assumed $p$ does not lie on a vertical separatrix, $p$ does not hit a singularity of the metric before $t=0$. We may partition the interval $(0,e)$ into closed subintervals according to which cylinder in $\cC$ (including the boundary of the cylinder) $p$ is in at a given time $t\in (0,e)$. (The subintervals overlap at their endpoints, and by convention we require adjacent intervals to correspond to distinct cylinders in $\cC$.) 

Let $\gamma$ be a path from a singularity to the periodic point. Let $f(t)$ denote the imaginary part of the period of $\gamma$ at time $t$ along the collapse path. (The function $f$ is only well defined up to replacing it with $-f$).   If $h$ is the height of the shortest cylinder in $\cC - \{ C \}$, then  $p$ passes through at most $\frac{|f(0) - f(e)|}{h}$ cylinders in $\cC - \{ C \}$ along the collapse path. This shows that the partition described in the previous paragraph is finite. 

By replacing $(Q,q)$ with an appropriate point on the collapse path, we may assume that in fact $p$  remains in $D$ along the collapse path. Giving up our assumption that $p$ doesn't lie on a vertical separatrix, with can replace $(Q,q)$ with a sheared version of itself to again assume without loss of generality that the cylinder $C$ contains a vertical saddle connection $v_1$. Note that the assumption that $p$ remains in $D$ is still valid.
 
Suppose first that $C=D$. If $p$ lies on a vertical separatrix contained in $C$ then we may shear the surface to perform a full Dehn twist (in the case that $C$ is simple) or half a Dehn twist (in the case that $C$ is an envelope) to ensure that $C$ still contains a vertical saddle connection and that $p$ does not lie on a vertical separatrix contained in $C$. See Figure~\ref{F:Shear1} and, for a non-example, Figure~\ref{F:Shear2}.  Collapsing $C$ now gives a half translation surface where $p$ is not a singularity of the metric, but is a periodic point, which is a contradiction. 

\begin{figure}[h!]
\includegraphics[width=.8\linewidth]{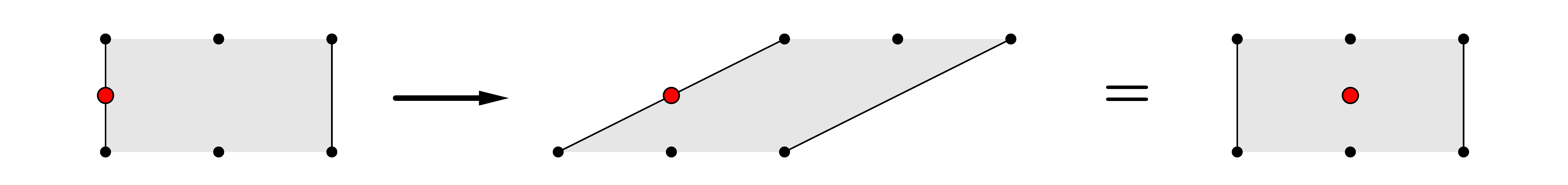}
\caption{Cylinders with four hat homologous boundary saddle connections must be avoided, since if $p$ is the midpoint of a vertical saddle connection, a Dehn twist cannot fix this problem. Another reason to avoid degenerating such cylinders is that the double cover may become disconnected (which can also happen for non-simple envelopes).}
\label{F:Shear2}
\end{figure}
 
Next we suppose that $C\neq D$.  Suppose that there is a small perturbation of $(Q, q)$, so that $v_1$ remains vertical and $C$ remains horizontal, and such that after replacing $(Q,q)$ with this deformation $p$ does not collide with a singularity at the conclusion of the collapse path. If this occurs, then, as above, $p$ becomes a periodic point on the boundary, which has no periodic points by hypothesis.  Therefore, we may suppose that after any small perturbation as above, $p$ collides with a singularity at the conclusion of the collapse path. In particular, this means that there is a saddle connection $v_2$ in $D$  joining a singularity on the boundary of $D$ to $p$, and a positive real constant $c$, so that the orbit closure of $(Q, q; p)$  locally satisfies  the equation $v_2 = c v_1$. 

\noindent \textbf{Step 3: $p$ may be moved out of $\cC$.}

The idea of the proof will now be to vary $v_1$ to move the periodic point $p$ outside of $\cC$ (which will be a contradiction). Since $v_2 = c v_1$ we see that by shearing $C$ to the left or right while fixing the complement of $C$ we may move the marked point to the left or right. Similarly, increasing the height of $C$ while fixing the rest of the surface causes $p$ to move up or down. We will use these two operations to force $p$ to move through the complement of $C$ (and we will be careful not to cause $p$ to enter $C$). We will refer in the sequel to these operations as ``moving $p$ using $C$".  Suppose that increasing the height of $C$ moves $p$ toward a boundary $B$ of $D$. 

Our approach will use the results of Masur-Zorich~\cite{MZ} that we summarized in Proposition \ref{P:MZ}. In the sequel, we will say that a saddle connection ``leads out of $\cC$" if it borders a cylinder in $\cC$ on exactly one side. 

\begin{sublem}\label{SL:MZ1}
The boundary $B$ cannot contain two saddle connections of multiplicity one. 
\end{sublem}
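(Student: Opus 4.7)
The plan is to argue by contradiction: suppose $B$ consists of two multiplicity-one saddle connections $s_1$ and $s_2$. By Proposition~\ref{P:MZ}~(3), removing $s_1 \cup s_2$ disconnects the surface, and the component $T$ not containing $D$ is a (half-)translation surface with trivial linear holonomy and horizontal boundary $\partial T = s_1 \cup s_2$.

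I would then push $p$ across $s_1$ into the interior of $T$ using the operations from Step 3 (moving $p$ using $C$). First, a preliminary shear of $C$ arranges that $p$ lies horizontally over the interior (not the endpoints) of $s_1$; then increasing the height of $C$ moves $p$ upward, via the local equation $v_2 = cv_1$, until it crosses $s_1$ at a non-singular interior point, landing in the interior of some horizontal cylinder $D' \subset T$. Next I would show that $D'$ lies in the complement of $\cC$: because $T$ has trivial linear holonomy, every horizontal cylinder of $T$ has trivial monodromy around its core curve, and combined with the $\cM$-parallelism constraint together with Proposition~\ref{P:MZ} applied in the generic horizontal direction, this should rule out $D' \in \cC$.

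Finally, once $p$ is in the complement of $\cC$, I would invoke the standard fact (in the spirit of \cite[Lemma 4.6]{MirWri}) that the position of $p$ outside $\cC$ is preserved under cylinder deformations of $\cC$. Collapsing $C$ (as in the setup of Proposition~\ref{P:induct}) then carries $p$ to a well-defined periodic point on the boundary surface in $\cQ'$, contradicting the hypothesis that $\cQ'$ has no periodic points.

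The main obstacle I anticipate is the intermediate step: rigorously ruling out $D' \in \cC$. This requires combining the trivial linear holonomy of $T$ with the classification of $\cM$-parallelism, and may need a separate argument in the degenerate case where $C$ itself has trivial monodromy around its core curve. A secondary technical point is ensuring that the preliminary shear of $C$, and the subsequent height increase, can indeed be performed while preserving that the horizontal direction remains generic and that $p$ avoids the singular endpoints of $s_1$ throughout; this should follow from a genericity argument, since the ``bad'' parameters form a positive codimension subset.
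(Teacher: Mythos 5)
Your reduction to the case analysis "both saddle connections of $B$ lead out of $\cC$" is only the easy half of the statement, and the step you yourself flag as the main obstacle --- ruling out that the cylinder $D'$ entered after crossing $s_1$ lies in $\cC$ --- is a genuine gap, because the claim is false in general. Nothing prevents a cylinder of $\cC$ from lying in the component $T$ cut off by $s_1\cup s_2$: by the last part of Proposition~\ref{P:MZ}, when a saddle connection of $B$ does not lead out of $\cC$ the cylinder on its far side is a \emph{simple} cylinder of $\cC$, and it can even be $C$ itself (this is exactly the configuration of Figure~\ref{F:D-prime-configuration} with $D'=C$). The trivial-holonomy observation gives you nothing here: every cylinder's core curve has trivial linear holonomy on any half-translation surface, so "trivial monodromy around the core curve" does not distinguish cylinders of $T$ from cylinders of $\cC$, and $\cM$-parallelism places no obstruction to $\cC$-cylinders sitting inside $T$ (trivial holonomy of $T$ only rules out $C$ being a simple envelope inside $T$, not $C$ a simple cylinder).

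These adjacent-cylinder cases are where all the work lies. When the far side of $B$ is a simple cylinder $D'\in\cC$ with $D'\neq C$, one must shear $C$ and increase its height so that $p$ passes \emph{through} $D'$ and then out of $\cC$, using the structure of $D'$ from Proposition~\ref{P:MZ}; when $D'=C$ this move is unavailable (the point would have to traverse the very cylinder being deformed), and one instead needs the equation $v_2=cv_1$ to "overcollapse" $C$ (push the imaginary part of $v_1$ negative), together with an analysis of the opposite boundary $B'$ of $D$ in which the degenerate possibilities are excluded because they force $\cQ=\cQ(2,-1^2)$ or $\cQ(2,2)$, which are rank one, and a check that no newly created cylinder of $\cC$ recaptures $p$. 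None of this is present in your outline, so as written the proposal only disposes of the case in which both saddle connections of $B$ already lead out of $\cC$.
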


\begin{proof}
Suppose not to a contradiction. If both saddle connections in $B$ lead out of $\cC$ then we increase the height of $C$ to move $p$ through $B$ and out of $\cC$.  Otherwise, by Proposition \ref{P:MZ} the boundary $B$ of $D$ is as in one of the subfigures in Figure~\ref{F:3hatDB}

\begin{figure}[h!]
\includegraphics[width=.9\linewidth]{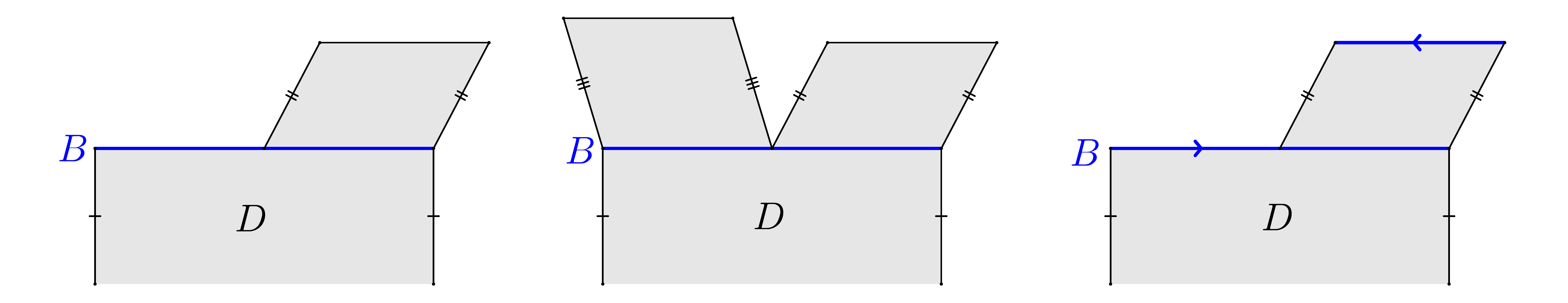}
\caption{Possible configurations of $D$ and $B$. In the left two cases, the upper edges are glued to the complement of $\cC$ (not shown).}
\label{F:3hatDB}
\end{figure}

We see that the left two configurations are impossible, as shown in Figure~\ref{F:LeavingEC1}. In that figure, we shear $C$ and increase its height so that the marked point leaves $\cC$. In the right configuration of Figure~\ref{F:LeavingEC1} this is easy since there is a saddle connection $b_0$ in $B$ that leads out of $\cC$. In the left configuration one of the two simple cylinders bordering $B$, call it $E$, is not $C$, and passing through $E$ leads out of $\cC$ by Proposition \ref{P:MZ}.

\begin{figure}[h!]
\includegraphics[width=.9\linewidth]{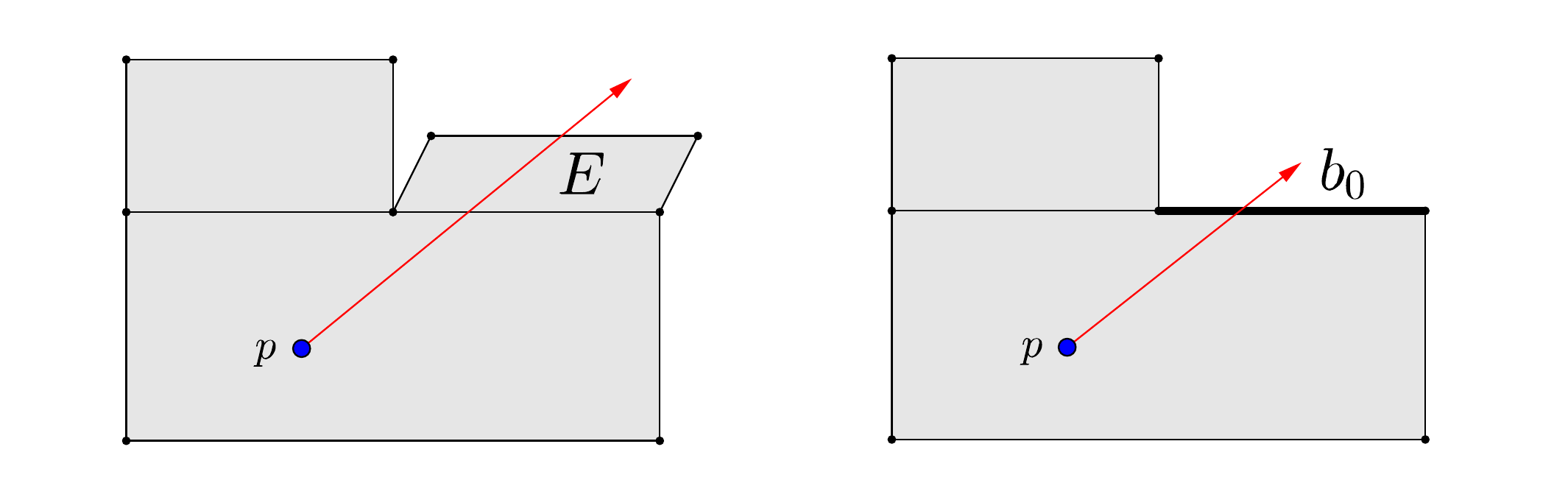}
\caption{Shear $C$ and increase its height so $p$ exits $\cC$}
\label{F:LeavingEC1}
\end{figure} 

Therefore, $D$ and its boundary $B$ are arranged as in Figure~\ref{F:D-prime-configuration}. Let $D'$ be the simple cylinder that borders $D$ along $B$.

\begin{figure}[h!]
\includegraphics[width=.4\linewidth]{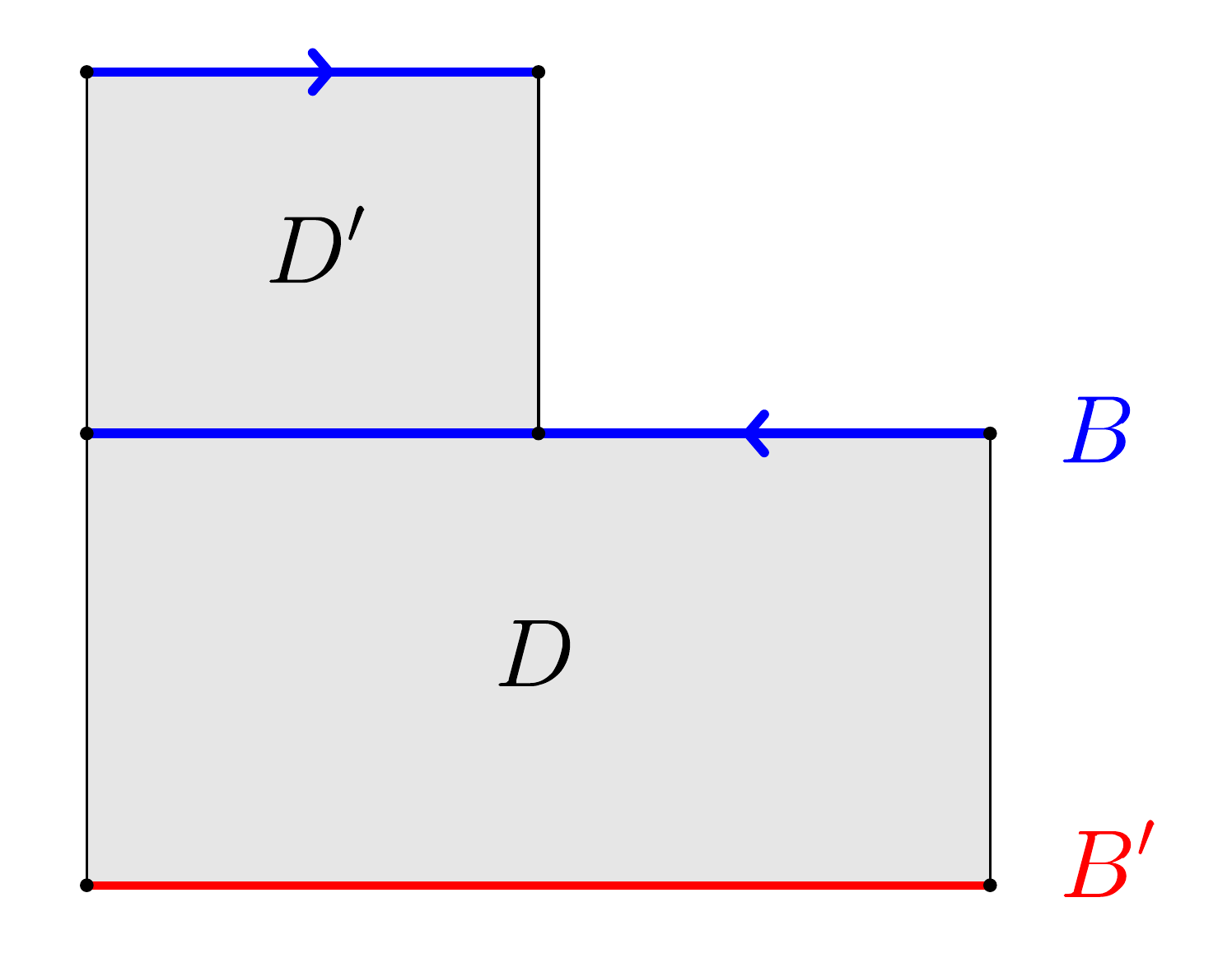}
\caption{The configuration of $D$ and its boundary $B$}
\label{F:D-prime-configuration}
\end{figure}

Let $B'$ be the boundary of $D$ opposite $B$. Notice that $B'$ cannot consist of a single multiplicity two saddle connection since then $\cQ = \cQ(2, -1^2)$, which is rank one. Moreover, $B'$ cannot consist of two multiplicity one saddle connections that bound a simple cylinder (as in Figure~\ref{F:D-prime-configuration}) since then $\cQ = \cQ(2, 2)$, which is also rank one. Therefore, Proposition \ref{P:MZ} implies that either $B'$ contains a saddle connection that leads out of $\cC$ or $B'$ borders two distinct simple cylinders in $\cC$. The possibilities are shown in Figure~\ref{F:LeavingEC2}. In that figure we see that when $D' \ne C$ it is possible to shear $C$ and increase its height to move the marked point $p$ out of $\cC$ as we did in Figure~\ref{F:LeavingEC1}. It is important that $D' \ne C$ since the marked point will pass through $D'$ as it moves to leave $\cC$.

\begin{figure}[h!]
\includegraphics[width=.9\linewidth]{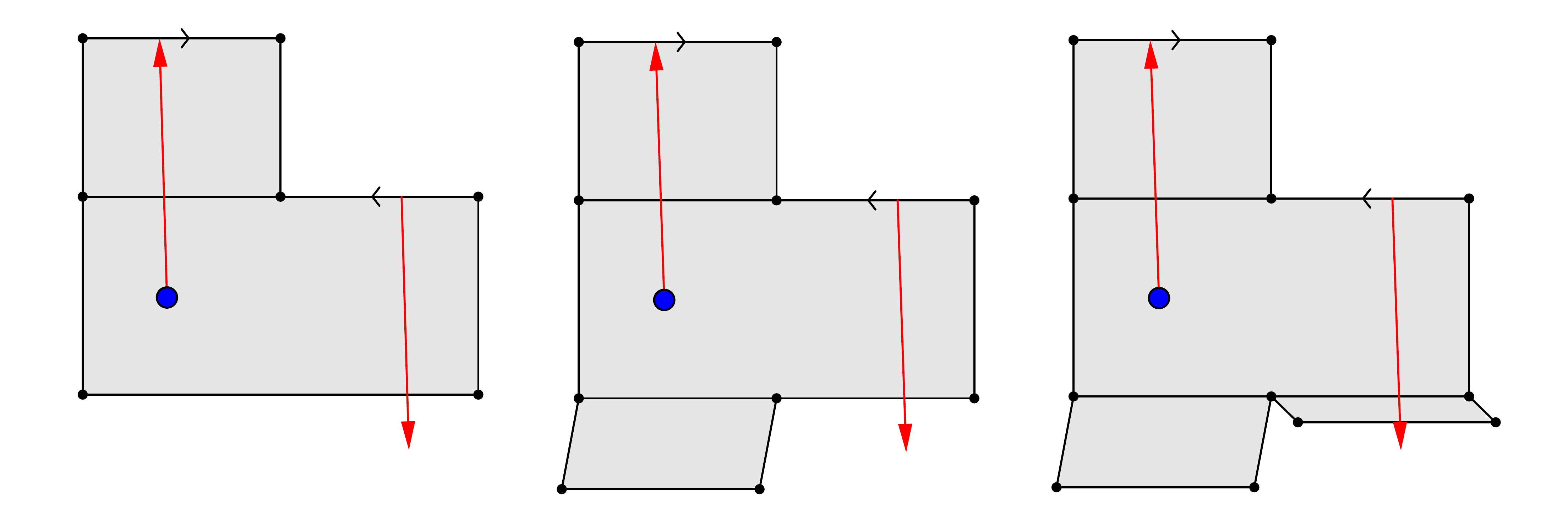}
\caption{Three configurations of $D$ and moving $p$ out of $\cC$ when $D' \ne C$. In the leftmost, the bottom saddle connection could also be two distinct saddle connections. }
\label{F:LeavingEC2}
\end{figure} 

Therefore, we may suppose that $D' = C$. Recall that there is a cross curve $v_1$ in $C$ and a positive real constant $c$ so that $v_2 = c v_1$ where $v_2$ is a saddle connection joining a singularity on $B'$ to $p$. We now decrease the height of $C$ so that the imaginary part of the period of $v_1$ changes from positive to negative. As we see in Figure~\ref{F:LeavingEC3}, this ``overcollapse of $C$" can be performed in a way that moves the marked point out of $\cC$ (in the rightmost figure we must first make the bottom cylinder that the marked point passes through sufficiently small; this may be achieved without affecting the position of the marked point since $v_2 = cv_1$). 

When $C$ overcollapses, a new horizontal cylinder $C'$ is created, visible at the top of the figures in Figure~\ref{F:LeavingEC3}. It is simple, with both boundary components glued to the cylinder $D\in \cC$ below it, so it is generically parallel to the surviving cylinders of $\cC$. (One might say more informally that $C'$ is ``in $\cC$", meaning that it is in the equivalence class of the surviving cylinders of $\cC$.) No other horizontal cylinders are created by the overcollapse, since it does not effect the rest of the surface.

\begin{figure}[h!]
\includegraphics[width=\linewidth]{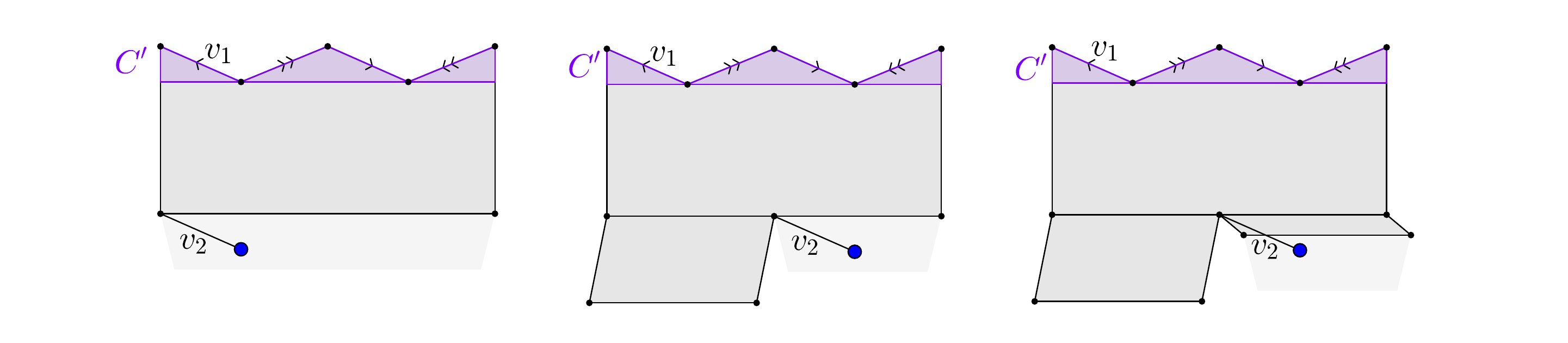}
\caption{Moving $p$ out of $\cC$ when $D' = C$. (This figure is drawn with $c=1$.)}
\label{F:LeavingEC3}
\end{figure}

Once $p$ has left $\cC$, we obtain a contradiction as follows. After $C$ is collapsed, the continuation of the overcollapse deformation of the unmarked surface is actually a cylinder deformation of $C'$ and $D$; more precisely it increases the height of $C'$ and decreases the height of $D$. Since both $C'$ and $D$ are in the equivalence class of the surviving cylinders in $\cC$, and since the marked point is now not contained in the closure of this equivalence class, this contradicts the initial claim in Step 1. 
\end{proof}

\begin{sublem}\label{SL:MZ2}
The cylinder $D$ is simple and borders an equivalent cylinder along $B$.
\end{sublem}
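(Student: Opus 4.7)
The plan is to combine Sublemma \ref{SL:MZ1}, Proposition \ref{P:MZ}, and the ``moving $p$ using $C$'' technique to pin down the structure of $D$.

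First, I would rule out the possibility that $B$ is case (2) of Proposition \ref{P:MZ}. Since Sublemma \ref{SL:MZ1} already excludes case (3), this would leave $B$ in case (1). Suppose instead that $B$ were a single multiplicity-two saddle connection $s$. Then both sides of $s$ bound $D$, so raising $C$ to push $p$ toward $B$ makes $p$ wrap back into $D$ rather than exit $\cC$. My plan is then to continue tracking $p$ as it travels in a straight line through $D$ until it hits the opposite boundary $B'$, and to analyze $B'$ via Proposition \ref{P:MZ} case by case, in the spirit of the figures used in Sublemma \ref{SL:MZ1}. In every configuration one should exhibit either a neighboring cylinder outside of $\cC$ into which $p$ can be pushed by shearing and raising $C$ (yielding the desired contradiction via the hypothesis that $\cQ'$ has no periodic points), or a configuration that forces $\cQ$ to be one of the rank-one strata excluded by Corollary \ref{C:q-rank-one}.

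Second, with $B$ identified as case (1), let $s$ denote its unique multiplicity-one saddle connection and let $D''$ be the cylinder on the other side of $s$. If $D''$ were not in $\cC$, shearing and raising $C$ would push $p$ across $s$ into $D''$, exiting $\cC$ and again giving a contradiction. Hence $D'' \in \cC$, establishing that $D$ borders an equivalent cylinder along $B$.

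Third, since $D$ and $D''$ share the saddle connection $s$, I would invoke the last assertion of Proposition \ref{P:MZ}: up to switching $D$ and $D''$, one of them is simple and shares no other boundary saddle connection with any further cylinder, while the other has $s$ sitting in a boundary component of case (3) form. Since $B$, the boundary of $D$ containing $s$, is case (1) and not case (3), the simple cylinder must be $D$, yielding the first assertion. The degenerate scenario $D = D''$ should be handled separately: in that case $s$ appears with multiplicity one in each of the two boundary components of $D$, so $D$ is simple directly from the definition.

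The main obstacle is the first step. Ruling out case (2) for $B$ requires a careful case-by-case analysis of $B'$ and of the cylinders adjacent to $D$, paralleling the figure-by-figure reasoning in Sublemma \ref{SL:MZ1} and reducing each configuration either to an explicit deformation moving $p$ out of $\cC$ or to a rank-one exclusion.
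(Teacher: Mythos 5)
Your outline follows the paper's own route: first eliminate the possibility that $B$ is a single multiplicity-two saddle connection, then, with $B$ a single multiplicity-one saddle connection, argue that if its other side is not in a cylinder of $\cC$ then raising $C$ pushes $p$ through $B$ and out of $\cC$, and finally deduce simplicity of $D$ from the sharing clause of Proposition~\ref{P:MZ}. Your second and third steps are correct and essentially identical to the paper's (the degenerate $D=D''$ scenario you set aside cannot in fact occur: if the single saddle connection $s$ constituted both boundaries of $D$ the surface would be a flat torus, excluded by Assumption~\ref{A} and the higher rank hypothesis).

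The genuine gap is your first step, which you explicitly defer and which is the real content of the sublemma. The paper carries it out by enumerating, via Masur--Zorich, the possible configurations of a cylinder one of whose boundaries is a single multiplicity-two saddle connection (Figure~\ref{F:LeavingEC4}): one configuration occurs only in $\cQ(-1^4)$, which is rank one and hence excluded; in the second, the opposite boundary $B'$ is a single multiplicity-one saddle connection which, by the sharing clause of Proposition~\ref{P:MZ}, must lead out of $\cC$, so raising $C$ pushes $p$ through the fold at $B$ and out through $B'$; in the third, $p$ wraps through $B$ and then approaches a boundary consisting of two multiplicity-one saddle connections, and one must check, as in Figure~\ref{F:LeavingEC1} of Sublemma~\ref{SL:MZ1}, that one of these either leads out of $\cC$ or borders a simple cylinder \emph{other than} $C$ through which $p$ can exit. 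Your plan states the desired dichotomy (``exit $\cC$ or a rank-one stratum'') but does not produce the configuration list, and in particular does not address the analogue of the troublesome $D'=C$ situation from Sublemma~\ref{SL:MZ1}, which there required the separate overcollapse argument; one must verify that this situation does not recur (or handle it if it does) in the multiplicity-two case. As written, the first step is a statement of the intended outcome rather than an argument, so the proof is incomplete at precisely the point where the work lies.
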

\begin{proof}
Suppose first to a contradiction that $B$ consists of one multiplicity two saddle connection. By Proposition \ref{P:MZ} we have that $D$ must look like one of the cylinders in Figure~\ref{F:LeavingEC4}. 
\begin{figure}[h!]
\includegraphics[width=.9\linewidth]{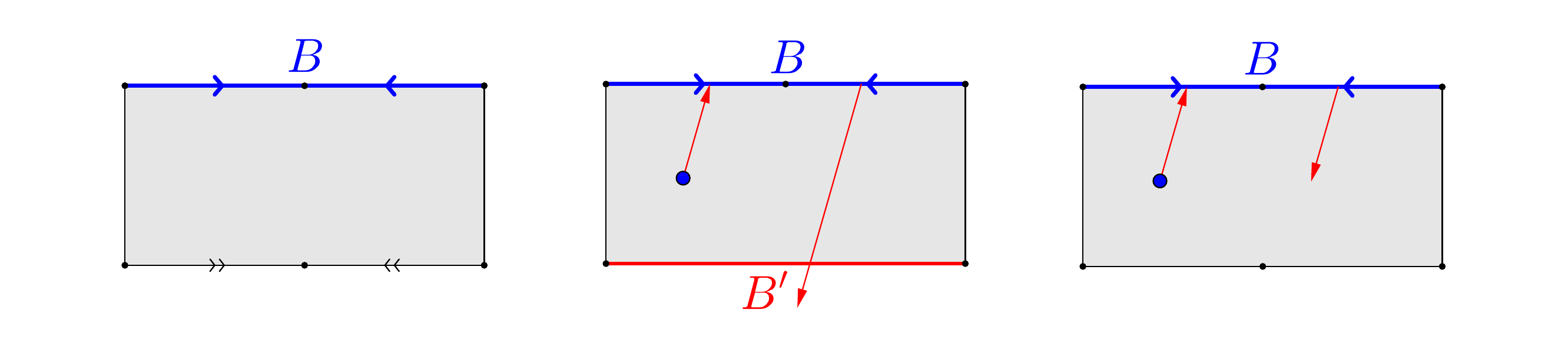}
\caption{Moving $p$ out of $\cC$ when $B$ contains one multiplicity two saddle connection. The cylinder shown is $D$.}
\label{F:LeavingEC4}
\end{figure}
The leftmost configuration only occurs in $\cQ(-1^4)$, which is rank one and hence precluded. In the middle configuration the boundary $B'$ opposite $B$ is a single multiplicity one saddle connection, which must lead out of $\cC$ by Proposition \ref{P:MZ}. Therefore, increasing the height of $C$ moves the marked point through $B$ and then out the opposite boundary of $D$ and hence out of $\cC$. In the rightmost configuration we increase the height of $C$ to move the marked point through $B$ and then into a situation where as the height of $C$ increases the marked point moves towards a boundary with two multiplicity one saddle connections. As in Sublemma~\ref{SL:MZ1}, specifically Figure \ref{F:LeavingEC1}, one of these saddle connections either leads out of $\cC$ or borders a cylinder other than $C$, which in turn leads out of $\cC$, so we reach a contradiction.

By Sublemma~\ref{SL:MZ1}, $B$ cannot consist of two multiplicity one saddle connections. Therefore, by Proposition \ref{P:MZ}, it must consist of a single multiplicity one saddle connection. If $B$ does not border another cylinder in $\cC$ then by increasing the height of $C$ we may move the marked point through $B$ and out of $\cC$, which is a contradiction (see the rightmost figure in Figure~\ref{F:LeavingEC1} for a similar case). Therefore, $B$ borders another cylinder  in $\cC$ and hence by Proposition \ref{P:MZ}, $D$ is simple. 
\end{proof}

Let $D'$ be the other cylinder in $\cC$ which borders $B$. Because $C$ is a simple cylinder or a simple envelope, $D'\neq C$. As $p$ moves into $D'$ it moves towards a boundary $B''$ of $D'$. We will reach a contradiction by moving $p$ out of $\cC$. Since all argument are very similar to those already given in this step, we will only sketch them here.

If $B''$ consists of one multiplicity two saddle connection then we proceed as in the middle left subfigure of Figure~\ref{F:FinalFig}. 
\begin{figure}[h!]
\includegraphics[width=\linewidth]{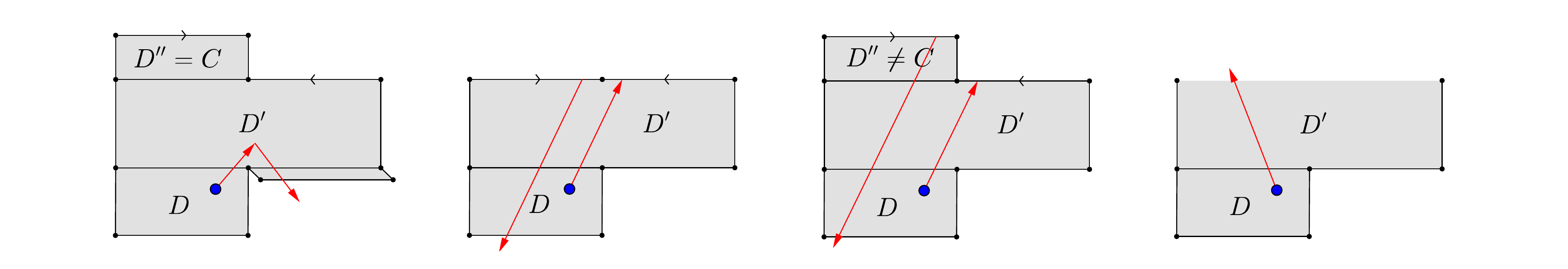}
\caption{Moving $p$ out of $\cC$ when $D$ is simple and borders an equivalent cylinder $D'$}
\label{F:FinalFig}
\end{figure}

If $B''$ borders a simple cylinder on both boundaries then call this cylinder $D''$. If $D'' = C$ then we may reduce the height of $C$ to move $p$ out of $\cC$ as in the leftmost subfigure of Figure~\ref{F:FinalFig} (if there is another cylinder bordering $D'$ then we first make it short so that we do not need to overcollapse $C$ to move $p$ out of $\cC$). Otherwise, we move $p$ out of $\cC$ as in middle right subfigure of Figure~\ref{F:FinalFig}. In all other cases we proceed as in the rightmost subfigure of Figure~\ref{F:FinalFig}.
\end{proof}

%
%

\section{Proof of Theorem \ref{T:findcyl}}\label{S:FindCyl}

To read this section it is necessary to be comfortable with the results of \cite{MZ} that are recalled in Proposition \ref{P:MZ}. Assumption \ref{A} is still in effect; otherwise $\cQ=\cH^{odd}(4)$ would be a counterexample to Theorem \ref{T:findcyl}. 


\begin{lem}\label{L:firstcyl}
Any $\cQ$ other than $\cQ(-1^4)$ contains a surface with a simple cylinder or two disjoint simple envelopes. 
\end{lem}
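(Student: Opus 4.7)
My plan is to work with a horizontally periodic representative $(Q,q) \in \cQ$ in a generic direction, which exists by density of Jenkins--Strebel differentials in every stratum, and to analyze the resulting horizontal cylinder decomposition via Proposition \ref{P:MZ}. Each boundary component of each horizontal cylinder is one of the three types (1)--(3) of that proposition; a simple cylinder has both boundaries of type (1), while a simple envelope has one boundary of type (1) and one of type (2).

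If some cylinder is simple, we are done. So assume no simple cylinder exists. Then the ``Furthermore'' clause of Proposition \ref{P:MZ} is the key tool: whenever two distinct cylinders share a boundary saddle connection, one of them must be simple. The absence of simple cylinders therefore forces that no boundary saddle connection is shared between two distinct cylinders. Combined with the three-case classification, this implies that every boundary component of every cylinder is either a single self-adjacent multiplicity-two saddle connection (type (2)) or a lone multiplicity-one saddle connection bordering a half-cylinder region around a simple pole rather than an honest cylinder. The resulting combinatorial picture is extremely rigid: each cylinder is, up to pole interactions, either an envelope or a ``double envelope'' (both boundaries of type (2)).

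The second step is to exhibit two disjoint simple envelopes in this rigid setting, or else to conclude that $\cQ = \cQ(-1^4)$. I plan to argue by a case analysis on the number of cylinders, the multiplicities of their boundaries, and the arrangement of poles, using the Euler characteristic of $Q$ and the singularity data of $\cQ$ to bound the possibilities. When at least one cylinder is a simple envelope, I will extract a second disjoint simple envelope by moving to an adjacent cylinder across a pole region or, if necessary, by applying an auxiliary cylinder deformation before re-periodicizing.

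The main obstacle is the case in which every cylinder is a double envelope, since no double envelope is itself a simple envelope, and the cylinder-sharing constraint prevents us from reading off simple envelopes as above. I expect to rule out this case by showing that a full decomposition into double envelopes forces each cylinder, when closed up by identifying its two self-adjacent boundary saddle connections, to be a topological sphere; connectedness then yields a single cylinder, and the remaining combinatorics force the singularity data to be exactly that of the pillowcase, i.e., $\cQ = \cQ(-1^4)$.
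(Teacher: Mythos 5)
There is a genuine gap, and it starts at the very first step. You ask for a horizontally periodic (Jenkins--Strebel) representative whose horizontal direction is \emph{generic} in the paper's sense, i.e.\ such that any two horizontal saddle connections are hat homologous. Density of Strebel differentials gives complete periodicity, but gives no control whatsoever on hat homology; in fact the two requirements are typically incompatible, since for a horizontally periodic surface the horizontal direction contains the boundary saddle connections of all the horizontal cylinders, and these usually fall into several distinct hat-homology classes (hat-homologous cylinders have circumference ratios in $\{\tfrac12,1,2\}$ and form the very restricted configurations of \cite{MZ}). Since Proposition~\ref{P:MZ} applies only to cylinders in a generic direction, your structural analysis of the full horizontal cylinder decomposition is unsupported from the outset. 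Even if one grants it, the dichotomy you draw after excluding simple cylinders is not correct: the ``furthermore'' clause only forbids a saddle connection from being shared by two \emph{distinct} cylinders, so a non-simple cylinder may perfectly well have a boundary of type (1) or type (3) whose saddle connections are glued to another boundary component of the \emph{same} cylinder, and in a completely periodic direction there are no ``half-cylinder regions around simple poles'' lying outside the cylinders. So nothing forces every boundary to be of type (2), and the subsequent ``double envelope'' discussion does not exhaust the cases. Finally, the heart of the lemma --- producing \emph{two disjoint} simple envelopes when no simple cylinder exists, or recognizing $\cQ(-1^4)$ --- is left as an unexecuted plan (``case analysis on the number of cylinders\dots moving across a pole region\dots re-periodicizing''), with no argument given.

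For comparison, the paper's proof avoids any global analysis of a periodic direction: pick two disjoint cylinders $C,D$ on a surface all of whose parallel saddle connections are hat homologous; if neither is simple and they are not both simple envelopes, then (by Proposition~\ref{P:MZ}) one of them, say $C$, has a boundary consisting of two distinct multiplicity-one saddle connections, cutting along which disconnects the surface, and the component $R$ not containing $C$ has trivial linear holonomy. A simple cylinder is then found inside $R$: either a cylinder of $R$ hat homologous to $C$ (which the configuration forces to be simple), or, by \cite{SW2}, a cylinder of $R$ not parallel to $C$, which is simple because cylinders on a generic translation surface are simple. If you want to salvage your approach, you would need either to prove the existence of a completely periodic generic direction (which is false in general) or to redo the analysis for a generic, not necessarily periodic, direction, in which case the complement of the cylinders contains minimal components and your classification of boundary types no longer applies.
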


\begin{proof}
Pick disjoint cylinders $C, D$ on a  surface in $\cQ$ where all parallel saddle connections are hat homologous. If both are simple envelopes, or if either is a simple cylinder, we are done,  so we assume otherwise. In particular, assume $C$ is not a simple envelope or a simple cylinder.

By Proposition \ref{P:MZ}, since $C$ is not a simple envelope and is not a simple cylinder, it has exactly two distinct saddle  connections on at least one side, and cutting these saddle connections disconnects the surface into two components. Furthermore, the  component $R$ not containing $C$ has trivial holonomy. 

If $R$ contains a cylinder $C'$ hat homologous to $C$, then $C'$ must be simple. Otherwise, \cite{SW2} gives a cylinder $C'$ on $R$ not parallel to $C$, which must be simple.

(The use of \cite{SW2} can be avoided by an argument using square-tiled surfaces. The use of translation surface with boundary can be avoided by gluing together the two boundary saddle connections of $R$. We are using the fact that on a generic Abelian differential, every cylinder is simple.)
\end{proof}

\begin{lem}\label{L:secondcyl}
Suppose $(Q', q')$ is in a hyperelliptic component other than $\cQ(-1^4)$, $S$ is a set of non-singular points of the metric, and $c$ is a saddle connection on $(Q', q', S)$. Assume all points of $S$ are endpoints of $c$ (so $|S|\in \{0,1,2\}$). Then, possibly after moving $(Q', q', S)$ in its stratum in such a way that $c$ remains a saddle connection,  there is a simple cylinder $C'$ on $(Q', q')$ disjoint from $c$ and that does not contain any point of $S$ on its boundary. 
\end{lem}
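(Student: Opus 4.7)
The plan is to combine a reduction step with Lemma \ref{L:firstcyl} and a cylinder degeneration argument.

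First, I reduce: the requirement that ``$C'$ does not contain any point of $S$ on its boundary'' follows automatically from $C'$ being disjoint from $c$ as a closed subset of $(Q', q')$. Indeed, $S$ is contained in the endpoints of $c$, so $C' \cap c = \emptyset$ implies $S \cap C' = \emptyset$, and in particular $S$ cannot lie on the boundary of $C'$. So the task reduces to producing, after moving $(Q', q', S)$ in its stratum while preserving $c$, a simple cylinder $C'$ on the underlying half-translation surface that is disjoint from $c$.

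Next, I apply Lemma \ref{L:firstcyl} to the hyperelliptic component of $(Q', q')$---which is not $\cQ(-1^4)$ by hypothesis---to obtain some surface in this component with either a simple cylinder or a pair of disjoint simple envelopes. In the envelope case the problem is easier: since the two envelopes are disjoint from one another and $c$ is a single saddle connection, at least one of the two envelopes can be chosen disjoint from $c$ after a small perturbation, and we reduce to the simple cylinder case. So I focus on the simple cylinder case.

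The main step is to find a surface $(Q'', q'', S'')$ in the stratum of $(Q', q', S)$ on which $c$ still appears as a saddle connection and the underlying half-translation surface admits a simple cylinder $C'$ disjoint from $c$. The strategy is to first shrink $c$: apply Teichm\"uller flow in the direction of $c$ (or a cylinder shear-and-stretch as in Theorem \ref{T:CDT} if $c$ lies in an appropriate cylinder equivalence class) so that $c$ becomes very short and confined to a small neighborhood of the surface. On this shortened surface, I choose a direction $\theta$ not parallel to $c$ and examine the cylinder decomposition in direction $\theta$. Among these cylinders, I locate one disjoint from the small neighborhood of $c$, and apply the Masur--Zorich structure theorem (Proposition \ref{P:MZ}) to it: either it is already simple or a simple envelope, or else Masur--Zorich exhibits a trivial-holonomy region cut off from $c$ which by the proof of Lemma \ref{L:firstcyl} contains a simple cylinder.

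The main obstacle will be ensuring that a cylinder in the direction $\theta$ decomposition, or a trivial-holonomy region supplied by Masur--Zorich, can actually be chosen disjoint from $c$. This requires both the shrinking of $c$ (to make $c$ localized) and a careful selection of the direction $\theta$; in particular one needs to rule out the possibility that every cylinder in direction $\theta$ either crosses $c$ or equals one of the at most two cylinders containing $c$ in their closure. For this I would combine the openness of ``has a simple cylinder disjoint from $c$'' in the stratum of $(Q', q', S)$ with the connectedness of the hyperelliptic component to upgrade existence in a single direction to existence simultaneously with $c$.
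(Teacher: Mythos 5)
There is a genuine gap, and it sits exactly where you acknowledge it: the step that upgrades ``a simple cylinder exists somewhere'' to ``a simple cylinder disjoint from $c$ exists on a deformation of $(Q',q',S)$ keeping $c$ a saddle connection.'' Openness of the condition ``has a simple cylinder disjoint from $c$'' plus connectedness of the hyperelliptic component proves nothing by itself (you would also need closedness or density, and in any case the relevant space is not the component but the locus of surfaces equipped with the persisting saddle connection $c$, whose connectedness you have not addressed). The preceding ``shrink $c$ and pick a direction $\theta$'' step also does not work as stated: an arbitrary direction on a half-translation surface need not be periodic, and even in a periodic direction nothing prevents every cylinder of the decomposition from crossing $c$; shortness of $c$ gives no control over this. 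The hard case is precisely $S=\emptyset$, where $c$ joins genuine singularities and cannot be moved; the paper handles it by a concrete case analysis: after arranging that all parallel saddle connections are hat homologous, either some cylinder disjoint from $c$ is simple, or Proposition~\ref{P:MZ} produces a trivial-holonomy subsurface (cut off on the side away from $c$) containing a simple cylinder, and in the residual case one makes the surface square-tiled with $c$ horizontal and analyzes the horizontal cylinder decomposition (one versus several horizontal cylinders, which saddle connections appear on top and bottom), ending with a short list of explicit configurations. None of this is replaced by your outline.

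Two further points. First, when $S\neq\emptyset$ the lemma is easy for a reason you do not exploit: the endpoints of $c$ are then marked points, which can be moved freely in the stratum, so any simple cylinder whatsoever (e.g.\ from the argument of Lemma~\ref{L:firstcyl}) can be made disjoint from $c$ by relocating $c$; your proposal treats both cases uniformly and so misses this. Second, your handling of the envelope alternative in Lemma~\ref{L:firstcyl} is incorrect as written: a simple envelope disjoint from $c$ does not ``reduce to the simple cylinder case,'' and the conclusion of the lemma requires a simple cylinder. The correct observation (used in the paper) is that hyperelliptic components contain no simple envelopes at all, since every cylinder is preserved by the hyperelliptic involution and a simple envelope admits no such involution, so this alternative simply does not occur.
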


\begin{proof} 
We assume all parallel saddle connections on $(Q', q')$ are hat homologous. Notice that since $\cQ$ is a hyperelliptic component  no cylinder is a simple envelope since every cylinder must be fixed by the hyperelliptic involution and a simple envelope does not admit an involution. 

First suppose $S$ is non-empty. Keeping in mind that the surface does not contain simple envelopes, Lemma \ref{L:firstcyl}  gives a cylinder $C'$ as desired, except that it may not be disjoint from $c$. However, moving the marked points we may make $c$ disjoint from $C'$. (Note that since $C'$ is simple it cannot cover the whole surface.)

So assume $S$ is empty. Let $C'$ be any cylinder disjoint from $c$. If $C'$ is simple we are done. Since the component is hyperelliptic, $C'$ can't be a simple envelope, and must have the same number of saddle connections on each of its two boundary components.  If $C'$ has two distinct saddle connections on each side then by Proposition \ref{P:MZ}, $C'$ disconnects the surface.  Cut the two saddle connections on the opposite side from $c$. By Proposition \ref{P:MZ}, the component $R$ not containing $C'$ or $c$ has trivial holonomy.  As in Lemma \ref{L:firstcyl}, we can find a simple cylinder in $R$. 

 The case that remains is that every cylinder $C'$ disjoint from $c$ is an envelope that has two multiplicity one saddle connections on one boundary. In this case  Proposition \ref{P:MZ} gives that the complement of $C'$ is connected and has trivial holonomy. In particular, there are only two poles, so we can't have two disjoint cylinders $C'$ of this type.

\begin{figure}[h!]
\includegraphics[width=.8\linewidth]{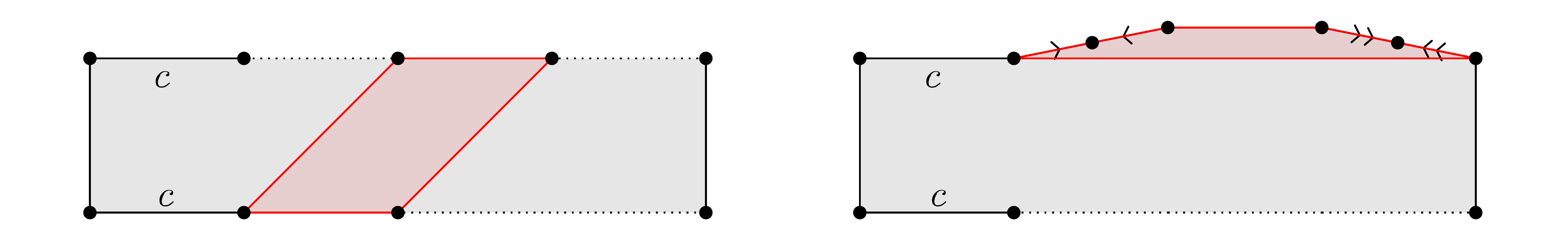}
\caption{The proof of Lemma \ref{L:secondcyl}.}
\label{F:Example}
\end{figure}

Assume $c$ is horizontal, and nudge the surface so that it becomes square-tiled. If there is more than one horizontal cylinder, by the previous comment at least one of them must not be as in the previous paragraph, so we  obtain a contradiction  (after nudging the surface again to restore the fact that  all parallel saddle connections are hat homologous). So assume there is just one horizontal cylinder. If a saddle connection other than $c$ appears both on the top and the bottom of this cylinder, then we can find a transverse simple cylinder disjoint from $c$, as in Figure \ref{F:Example} (left). So assume this is not the case. If two distinct saddle connections other than $c$ appear on the same side of the cylinder, we can nudge them to create a second horizontal cylinder while keeping the existing horizontal cylinder and $c$ horizontal, as illustrated in Figure \ref{F:Example} (right) in an example. (One can keep the surface horizontally periodic after the nudge by making it still be square-tiled, i.e. have rational period coordinates.) 

\begin{figure}[h!]
\includegraphics[width=\linewidth]{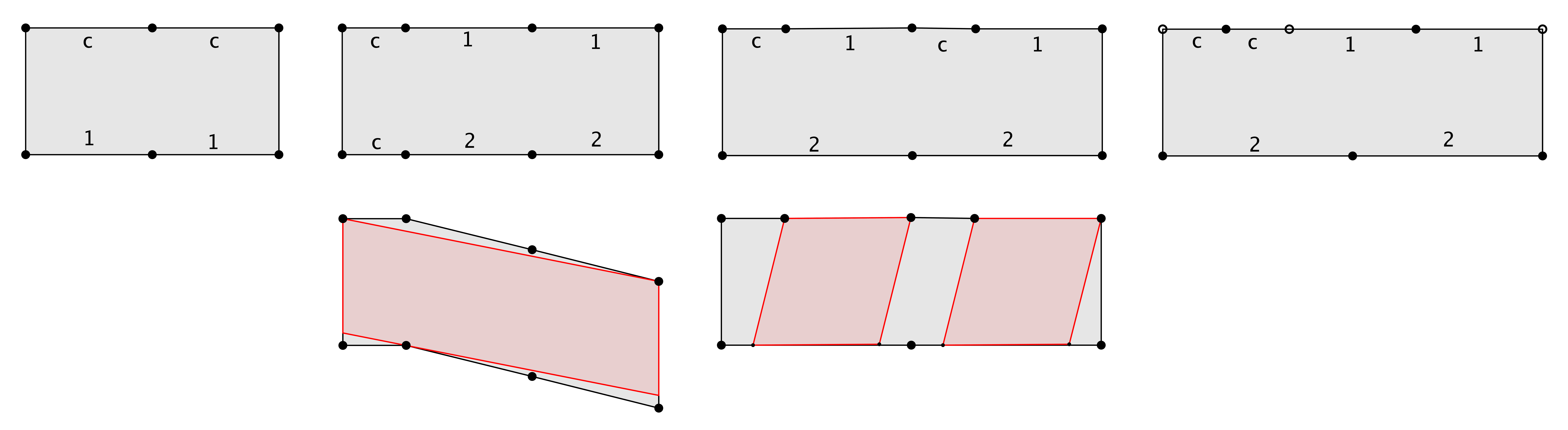}
\caption{The proof of Lemma \ref{L:secondcyl}.}
\label{F:BadStrebel}
\end{figure}
We are now in one of the four cases illustrated in the top of Figure \ref{F:BadStrebel}. Except that the left case is in $\cQ(-1^4)$ and hence excluded by our assumption that $(Q', q')\notin \cQ(-1^4)$, and the right case is excluded since there is a marked point.  For the middle two cases, the bottom of the figure shows how to find a simple cylinder disjoint from $c$. 
\end{proof}

\begin{lemma}\label{L:s-not-fixed}
Let $(Q', q')\in \cQ'$, and assume $\cQ'$ is hyperelliptic. Let $c$ be a saddle connection. Let $(Q, q)$ be the quadratic differential that arises from slitting $c$ and gluing in a simple cylinder. Then $(Q, q)$ belongs to a hyperelliptic component of a stratum of quadratic differentials if and only if $c$ is fixed by a hyperelliptic involution.
\end{lemma}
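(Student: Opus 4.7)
The plan is to prove both directions by tracking an involution through the surgery, noting that the surgery ``slit along $c$ and glue a simple cylinder $C$'' naturally interchanges boundary structure that an involution can or cannot extend across.

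For the ($\Leftarrow$) direction, assume $\tau'$ is the hyperelliptic involution of $(Q',q')$ and $\tau'(c) = c$ setwise. Since $\tau'$ is orientation-preserving with only finitely many fixed points on $Q'$, it cannot act on the segment $c$ as the identity, so it must act on $c$ as rotation through the midpoint (swapping endpoints). I would slit $(Q',q')$ along $c$ to obtain a bordered surface with two boundary copies $c_-, c_+$, where $\tau'$ extends by interchanging $c_-$ and $c_+$. The simple cylinder $C$ carries a canonical $180$-degree rotation $\sigma$ that swaps its two boundary circles without fixed points on them. Glue $C$ so that $\sigma|_{\partial C}$ coincides with $\tau'|_{c_- \cup c_+}$; this produces an involution $\tau$ on $(Q,q)$ with $\tau^* q = q$. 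Topologically, $Q/\tau$ is built from $Q'/\tau' \cong \bC P^1$ by removing the arc image of $c$ and gluing in $C/\sigma$, which is a disk; the result is again $\bC P^1$, so $(Q,q)$ lies in a hyperelliptic component.

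For the ($\Rightarrow$) direction, suppose $(Q,q)$ has hyperelliptic involution $\tau$. The plan reduces to showing $\tau(C) = C$. Given this, $\tau$ acts on $C$ (preserving $q$ and without boundary fixed points, by the structure of the surgery) as the rotation $\sigma$, hence descends to an involution $\tau''$ of $(Q',q')$ with $\tau''(c) = c$. For surfaces of genus at least $2$ the hyperelliptic involution is unique, forcing $\tau'' = \tau'$ and completing the proof; the low-genus cases (where $(Q',q')$ might lie in a genus $\le 1$ hyperelliptic stratum such as $\cQ(-1^4)$ or $\cQ(1,1,-1^2)$) have to be handled by direct inspection since the hyperelliptic involution there need not be unique, but the strata are explicit and small enough to check by hand.

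The main obstacle is ruling out $\tau(C) = \bar C$ with $\bar C \neq C$. In that case $\bar C$ is a simple cylinder on $(Q,q)$ parallel and disjoint from $C$, and collapsing $C$ descends $\bar C$ to a simple cylinder $\tilde{\bar C}$ on $(Q',q')$; the involution $\tau$ descends on the complement of $C \cup \bar C$ to an involution of $(Q',q')$ that swaps $c$ with a boundary saddle connection of $\tilde{\bar C}$, which by uniqueness would have to be $\tau'$. I would rule this out by a zero-order / combinatorial argument: applying ($\Leftarrow$) symmetrically with the roles of $C$ and $\bar C$ swapped, so that collapsing $\bar C$ instead produces an isomorphic copy of $(Q',q')$, and using this isomorphism together with the uniqueness of $\tau'$ to identify $\tau'(c)$ with $c$ itself, contradicting $\bar C \neq C$. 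Equivalently, one checks via Lanneau's classification of hyperelliptic components together with Proposition~\ref{P:MZ} that the asymmetric collapse of a $\tau$-paired configuration of simple cylinders never lands in a hyperelliptic component --- the zero orders at the $\tau$-paired endpoints of $c$ and $\bar c$ become unequal after collapsing only one of $C, \bar C$, violating the $(k,k,k',k',\ldots)$ pattern forced by hyperellipticity.
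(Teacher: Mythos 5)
Your proposal identifies the right mechanism (transporting an involution through the surgery), but it has two genuine gaps. The more serious one is in your ($\Leftarrow$) direction: from the existence of an involution $\tau$ on $(Q,q)$ with $\tau^*q=q$ and quotient $\bC P^1$ you conclude that ``$(Q,q)$ lies in a hyperelliptic component.'' That inference is false in general: a surface can lie in the hyperelliptic \emph{locus} of a non-hyperelliptic component (for instance, components such as $\cQ(12)$ contain quadratic differentials invariant under a hyperelliptic involution, yet have no hyperelliptic component). What you have actually shown is only that the single surface $(Q,q)$ is hyperelliptic. To get the component statement you need the additional input that the hyperelliptic locus through $(Q,q)$ has full dimension in its component --- e.g.\ by noting that the surgery varies in a family of dimension $\dim\cQ'+2$, which equals the dimension of the component of $(Q,q)$, together with the fact that the hyperelliptic locus is the (closed) image of a genus-zero stratum of the same dimension --- or, equivalently, by invoking Lemma~\ref{L:R-H} (this is exactly the route the paper takes: it omits the proof and says it follows from Lemma~\ref{L:R-H}, which is the tool that converts ``admits a degree-two quotient'' into ``the component is hyperelliptic'' at the level of generic elements). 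Your write-up never performs this dimension count or appeals to any such statement, so the conclusion of ($\Leftarrow$) does not follow from what you prove.

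The second gap is in ($\Rightarrow$), and you partly acknowledge it: the whole direction hinges on showing $\tau(C)=C$, and your treatment of the alternative $\tau(C)=\bar C\neq C$ is a plan rather than an argument. The claim that the isomorphism obtained by collapsing $\bar C$ instead of $C$ ``identifies $\tau'(c)$ with $c$ itself'' is not justified (an isomorphism of the collapsed surfaces does not by itself say anything about which saddle connection of $(Q',q')$ is $\tau'$-invariant), and the alternative ``one checks via Lanneau's classification and Proposition~\ref{P:MZ}\dots'' is deferred rather than carried out; note also that even when $\tau(C)=C$ you assert without proof that $\tau$ has no fixed points on $\partial C$, whereas a priori an involution with derivative $-1$ could preserve each boundary component of $C$, and this case needs to be either excluded or shown to still descend correctly. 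Finally, the low-genus cases where the hyperelliptic involution of $(Q',q')$ is not unique (the paper is explicitly careful about $\cQ(-1^4)$, cf.\ Lemma~\ref{L:two-cylinders-no-hyp} and Figure~\ref{F:Pillow}, where two saddle connections can be fixed by \emph{different} involutions) are exactly where your uniqueness-based descent can fail, and ``check by hand'' is not done. As it stands, neither direction is complete; the missing content in both is precisely what Lemma~\ref{L:R-H} and the accompanying dimension/uniqueness bookkeeping supply in the paper's intended proof.
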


We omit the proof. By Assumption \ref{A} and Remark \ref{R:Unique}, the hyperelliptic involution is unique for $\cQ'$ except when $\cQ'=\cQ(-1^4)$. For $\cQ(-1^4)$, there are two hyperelliptic involutions.

\begin{lemma}\label{L:two-cylinders-no-hyp}
Suppose $C_1$ and $C_2$ are disjoint simple cylinders on  $(Q, q)$ such that collapsing either $C_i$ does not create marked points and gives a surface in a hyperelliptic component. Then $(Q, q)$ is in a hyperelliptic component or ${\mathcal{Q}}(5, -1)$. 
\end{lemma}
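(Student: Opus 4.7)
The main tool will be Lemma \ref{L:s-not-fixed}: on a hyperelliptic surface, slitting a saddle connection and inserting a simple cylinder yields a hyperelliptic surface if and only if the saddle connection is fixed by the hyperelliptic involution. The plan is to argue by contradiction, assuming $(Q, q)$ is not in a hyperelliptic component, and show that then $(Q, q) \in \cQ(5, -1)$.

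For each $i \in \{1, 2\}$, let $(Q_i', q_i')$ be the collapse of $C_i$, lying in a hyperelliptic component with hyperelliptic involution $\tau_i$, and let $c_i'$ be the saddle connection image of $C_i$. Under the contradiction hypothesis, Lemma \ref{L:s-not-fixed} (contrapositive of the ``if'' direction) forces $\tau_i(c_i') \neq c_i'$; set $c_i'' = \tau_i(c_i')$, a saddle connection on $(Q_i', q_i')$ parallel to $c_i'$ of the same length. The saddle connection $c_1''$ survives on $(Q, q)$, parallel to the core curve of $C_1$, and the partial involution $\tau_1$ lifts to a partial isometry $\tilde{\tau}_1$ of $(Q, q) \setminus (C_1 \cup c_1'')$ swapping a neighborhood of the outer boundary of $C_1$ with a neighborhood of $c_1''$; similarly for $\tilde{\tau}_2$.

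The next step is a case analysis based on how each partial involution interacts with the other cylinder. The cylinder $C_2$ is disjoint from $C_1$, so it descends to a simple cylinder on $(Q_1', q_1')$, and $\tau_1$ either fixes it setwise or maps it to another simple cylinder of the same dimensions; analogously for $C_1$ under $\tau_2$. We also track where $c_i''$ sits with respect to $C_j$ (on its boundary, in its interior, or disjoint from it). Using Proposition \ref{P:MZ} to restrict the local structure of hat-homologous saddle connections adjacent to each cylinder, together with Lanneau's classification of hyperelliptic components of quadratic differential strata to restrict the possible $(Q_i', q_i')$, we aim to rule out every case except those where the combined data is consistent with $(Q, q) \in \cQ(5, -1)$. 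The stratum $\cQ(5, -1)$ is expected to emerge as the unique exception because it is a genus $2$ stratum (where the underlying Riemann surface is automatically hyperelliptic, supplying extra symmetry) with singularity data $(5, -1)$ just large enough to accommodate the two partial involutions without forcing the quadratic differential itself to be invariant under the Riemann surface's hyperelliptic involution.

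The main obstacle will be the case analysis: several a priori possibilities for the positions of $c_1''$ and $c_2''$ and for the action of each involution on the other cylinder must be addressed, and ruling each one out (or identifying it with a hyperelliptic configuration or with $\cQ(5, -1)$) will require careful geometric bookkeeping. In particular, the case $\tau_1(C_2) \neq C_2$ is subtle because it introduces a third cylinder $\tau_1(C_2)$ on $(Q, q)$ whose interaction with $C_1$, and with the parallel saddle connections $c_1', c_1''$, must be controlled via Proposition \ref{P:MZ}.
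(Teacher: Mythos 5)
Your proposal is a plan rather than a proof: the entire content of the lemma lies in the case analysis you defer. After correctly deducing from Lemma \ref{L:s-not-fixed} that (under the contradiction hypothesis) each $c_i'$ is not fixed by $\tau_i$, you introduce the partial involutions $\tilde\tau_1,\tilde\tau_2$ and the translated saddle connections $c_i''$, and then say you "aim to rule out every case" using Proposition \ref{P:MZ} and Lanneau's classification, with $\cQ(5,-1)$ "expected to emerge" as the unique exception. No mechanism is given that actually forces this conclusion, and the heuristic you offer (that $\cQ(5,-1)$ has genus $2$, so the underlying curve is hyperelliptic) is not the operative reason; as written, the argument does not rule out any non-hyperelliptic stratum other than by hope. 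In particular, nothing in your setup controls how the two involutions $\tau_1$ and $\tau_2$ interact on $(Q,q)$, which is precisely the difficulty.

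The paper closes this in two lines by collapsing \emph{both} cylinders to a common degeneration $(Q',q')$ carrying two saddle connections $c_1,c_2$ (one checks no marked points appear, since otherwise the hyperelliptic involution would have to fix the marked-point set, forcing $\cH(0,0)$ and contradicting Assumption \ref{A}). Gluing a cylinder into $c_i$ recovers the collapse of $C_{3-i}$, which is hyperelliptic by hypothesis, so Lemma \ref{L:s-not-fixed} — used in the opposite direction from yours — shows each $c_i$ is fixed by a hyperelliptic involution of $(Q',q')$. Since the hyperelliptic involution is unique on every relevant stratum except $\cQ(-1^4)$, both $c_i$ are fixed by the same involution, and two more applications of Lemma \ref{L:s-not-fixed} show $(Q,q)$ is hyperelliptic; the only escape is $(Q',q')\in\cQ(-1^4)$ with $c_1$ and $c_2$ fixed by \emph{different} hyperelliptic involutions, and that single configuration is exactly what produces $\cQ(5,-1)$. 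If you want to salvage your single-collapse approach, you would need to supply an argument playing the role of this uniqueness statement — i.e., something that forces $\tau_1$ and $\tau_2$ to be compatible except in one explicit configuration — and that is currently missing.
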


\begin{proof}
Degenerating both $C_i$ gives a hyperelliptic surface $(Q', q')$ with no marked points and with two saddle connections, $c_1$ and $c_2$. (If marked points were created, then even after moving the marked points  in an arbitrary way  there would have to be a hyperelliptic involution  preserving  the set of marked points. By assumption Assumption \ref{A}, $q'$ is not the square of a genus 1 Abelian differential. Hence $q'$ has at least one zero, and hence $(Q',q')$ has at mostly finitely many hyperelliptic involutions (typically only 1!). Hence it is is not possible that the set of marked points is preserved by a hyperelliptic involution even after being moved in an arbitrary way. This contrasts with $\cH(0,0)$.)

Since gluing in a cylinder to either $c_i$ gives a surface in a hyperelliptic component, Lemma \ref{L:s-not-fixed} gives that each $c_i$ is fixed by the involution. Hence  Lemma \ref{L:s-not-fixed} gives that $(Q,q)$ is hyperelliptic. 

\begin{figure}[h]
\includegraphics[width=.3\linewidth]{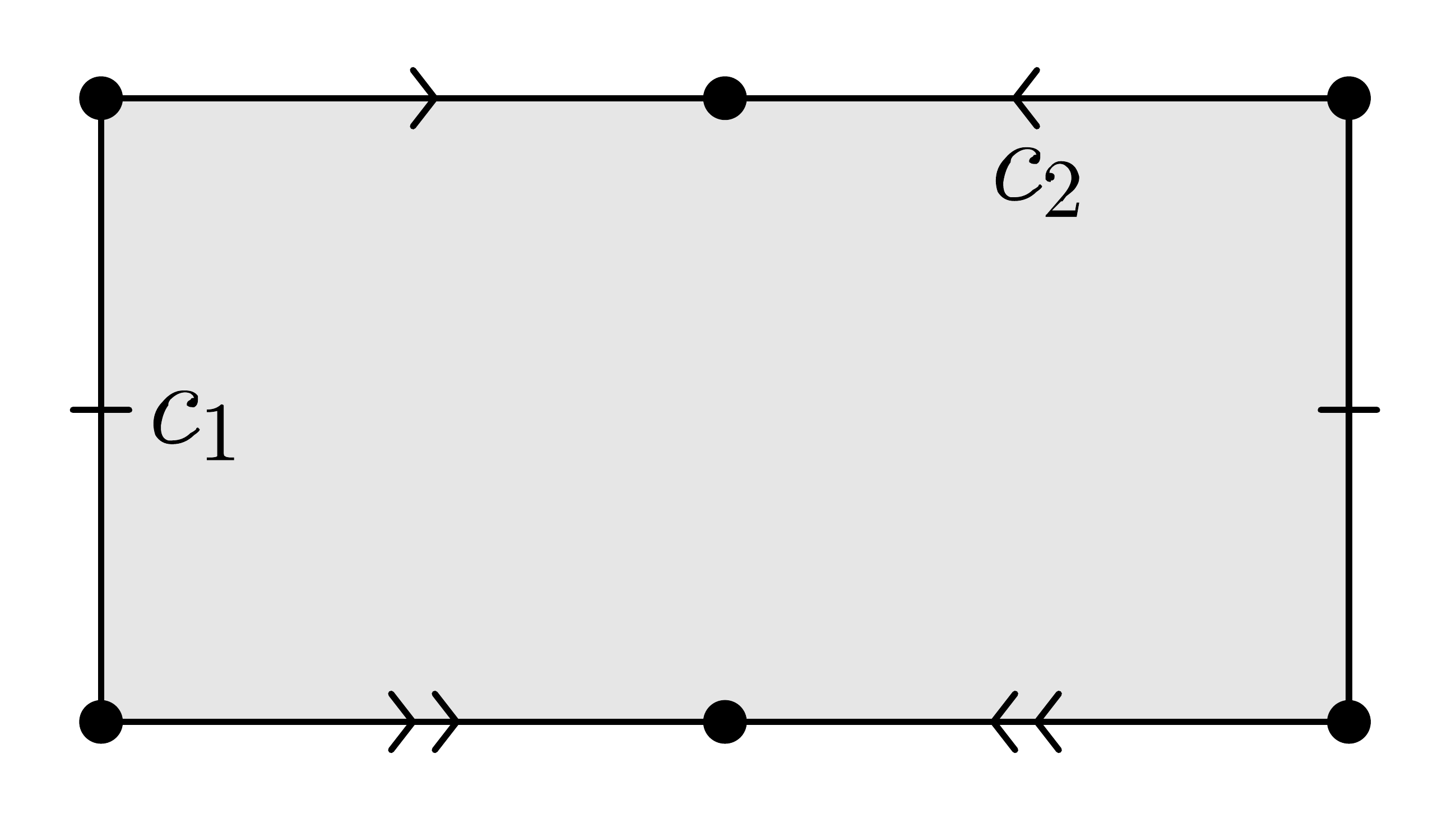}
\caption{If $c_1$ and $c_2$ are fixed by different hyperelliptic involutions, then up to $GL^+(2, \bR)$ the situation is as illustrated here. Compare to Figure  \ref{F:Prym3A}.}
\label{F:Pillow}
\end{figure}

This proof works as long as the hyperelliptic involution on $(Q', q')$ is unique, which is true for all strata but $\cQ(-1^4)$ and $\cH(\emptyset)$, the latter of which cannot arise here. If $(Q', q')\in \cQ(-1^4)$, there is the possibility that $c_1$ is fixed by one hyperelliptic involution, and $c_2$ by another, as in Figure \ref{F:Pillow}. In this case  $(Q, q)\in \mathcal{Q}(5, -1)$. 
\end{proof}

\begin{proof}[Proof of Theorem \ref{T:findcyl}]
Let $\cQ$ be non-hyperelliptic and not $\mathcal{Q}(3, -1^3),$ $\mathcal{Q}(5, -1)$, or $\mathcal{Q}(1, -1^5)$. 

Let $C_1$ be a cylinder that is either simple or a simple envelope on some $(Q, q)\in \cQ$. This exists by Lemma \ref{L:firstcyl}. If possible, pick $C_1$ so that degenerating it produces marked points; otherwise we will assume that degenerating any simple cylinder or simple envelope does not produce marked points.

Let $(Q_1, q_1, S_1)$ be the result of degenerating $C_1$. The cylinder $C_1$ becomes a saddle connection $c_1$ on $(Q_1, q_1, S_1)$. We may assume $(Q_1, q_1)$ is in a hyperelliptic component. 

\bold{Case 1: $(Q_1, q_1)\notin \cQ(-1^4)$.} Let $C_2$ be the cylinder on  $(Q_1, q_1, S_1)$ given by Lemma \ref{L:secondcyl}, which is disjoint from $c_1$. There is a corresponding cylinder on $(Q,q)$, which we also call $C_2$. Let $(Q_2, q_2, S_2)$ be the result of degenerating $C_2$ on $(Q,q)$. We may assume $(Q_2, q_2)$ is in a hyperelliptic component. Since surfaces in hyperelliptic components don't contain simple envelopes, we get that both $C_i$ are simple. 

We now claim that $S_1$ is empty. Otherwise, since $C_2$ does not contain any points of $S_1$ in its boundary, we see that degenerating $C_1$ on $(Q_2, q_2)$ (with marked points $S_2$ forgotten) produces a surface with marked points. Since $(Q_2, q_2)$ is contained in a hyperelliptic component, every element of the resulting stratum of surfaces with marked points must have an involution fixing the set of marked points. This implies the resulting stratum is $\cH(0,0)$, which contradicts Assumption \ref{A}. 

Now assume $S_1$ is empty. Our choice of $C_1$ implies that $S_2$ is empty. Hence Lemma \ref{L:two-cylinders-no-hyp} contradicts the assumption that $\cQ$ is non-hyperelliptic. 

\begin{figure}[h!]
\includegraphics[width=.2\linewidth]{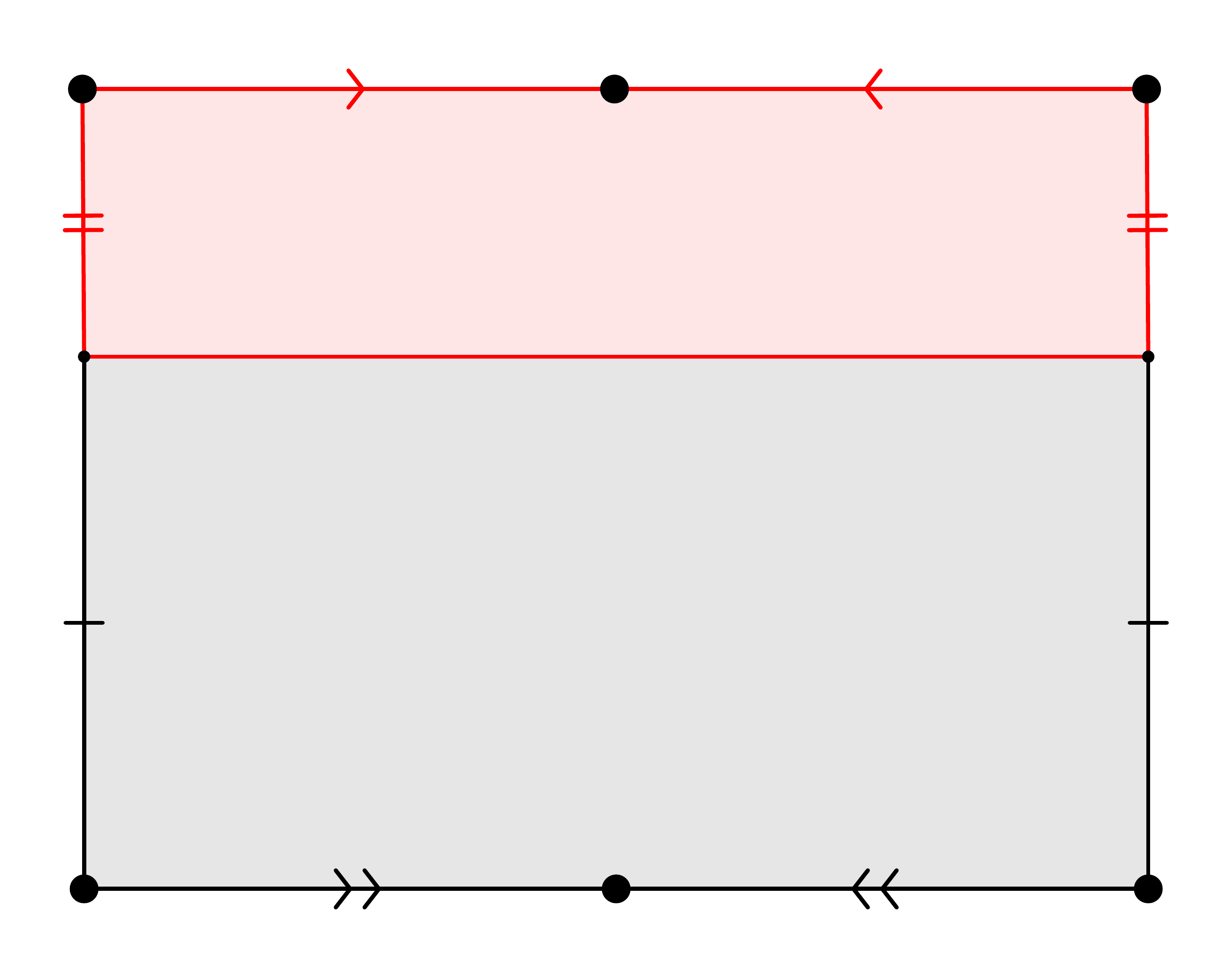}
\caption{Gluing a simple envelope onto a pillowcase with no marked points does not change the stratum.}
\label{F:NoChange}
\end{figure}

\bold{Case 2: $(Q_1, q_1)\in \cQ(-1^4)$.} 
If $S_1=\emptyset$, as in Figure \ref{F:NoChange} we get that $C_1$ is a simple cylinder,  since otherwise $(Q,q)\in \cQ(-1^4,0)$, a contradiction. As in Figure \ref{F:SmallCases} (top left) we conclude   that $\cQ=\cQ(2,-1,-1)$. This contradicts our assumption that $\cQ$ is not hyperelliptic. 

\begin{figure}[h!]
\includegraphics[width=.65\linewidth]{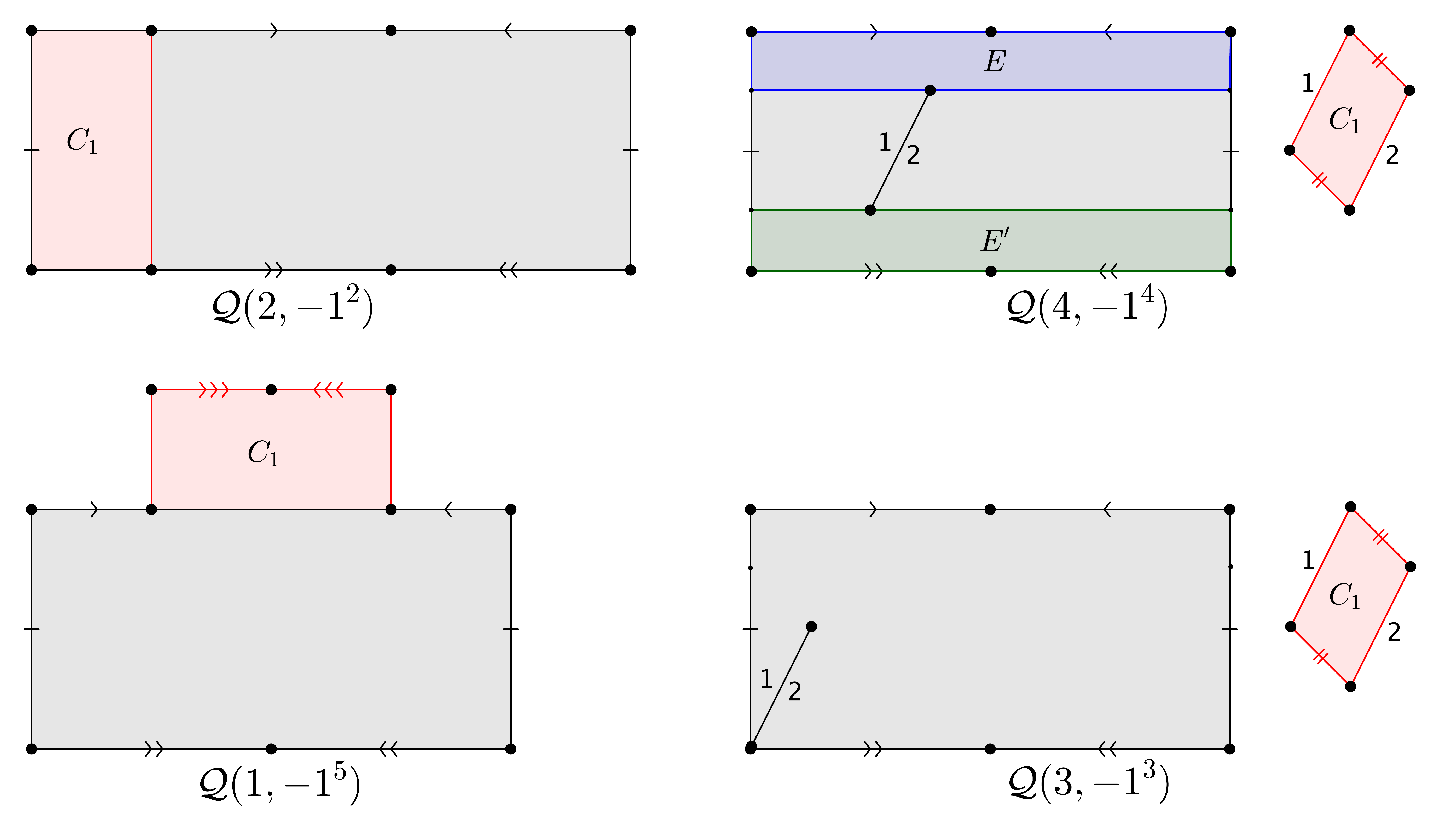}
\caption{The proof of Theorem \ref{T:findcyl}.}
\label{F:SmallCases}
\end{figure}

If $|S_1|=2$, then $C_1$ was simple and $\cQ=\cQ(4, -1^4)$. See Figure \ref{F:SmallCases} (top right). One can find a pair of disjoint simple envelopes $E, E'$ as in the figure such that degenerating either gives a surface in $\cQ(3, -1^3)$. 

If $|S_1|=1$, and $C_1$ was a simple envelope, then $\cQ=\cQ(1, -1^5)$. See Figure \ref{F:SmallCases} (bottom left).

If $|S_1|$=1 and $C_1$ was simple, $\cQ= \cQ(3, -1^3)$. See Figure \ref{F:SmallCases} (bottom right).
\end{proof}

%
%

\section{Proof of Theorem \ref{T:basecase}}\label{S:BaseCase}

The case of $\mathcal{Q}(1, -1^5)$ follows from \cite{Apisa}, which in particular classifies $\cH(2)=\tilde{\mathcal{Q}}(1, -1^5)$-periodic points. So we need only treat $\mathcal{Q}(3, -1^3)$ and $\mathcal{Q}(5, -1)$.


\begin{lemma}[Lemma 5.5 \cite{Apisa}]\label{calc1}
Let $(X, \omega)$ be a generic translation surface in an affine invariant submanifold $\M$. Let $C$ be a horizontal cylinder, and let $\cD_1, \cD_2$ be two vertical distinct $\M$-equivalence classes of cylinders such that the following two conditions hold:
\begin{enumerate}
\item The intersection of $\overline{\cup \cD_i}$ with the interior of $C$ is connected and nonempty for $i = 1, 2$. (Here $\cup \cD_1$ denotes the union of the cylinders in $\cD_1$, etc.)
\item Any cylinder equivalent to $C$ has a modulus that is an integer multiple of the modulus of $C$. 
\end{enumerate}
If $p$ is an $\M$-periodic point in the interior of $C$, then up to relabelling $\cD_1$ and $\cD_2$, the point $p$ is at the center of the rectangle given by the intersection of $\overline{\cup \cD_1}$  and $C$. Furthermore, removing  $\overline{\cup \cD_1}$ and $\overline{\cup \cD_2}$  divides $C$ into two rectangles of equal size. 

\begin{figure}[H]
            \centering
        \resizebox{.6\linewidth}{!}{\begin{tikzpicture}
                \draw[dashed] (0,3) -- (0,2);
                \draw (0,2) -- (0,0) -- (7,0);
                \draw[dashed] (1,3)--(1,0);
                \draw (1,2) -- (4,2);
                \draw[dashed] (4,3) -- (4,0);
                \draw[dashed] (5,3) -- (5,0);
                \draw (5,2) -- (7,2);
                \draw[black, fill] (.5, 1) circle[radius = 1.6pt];
                \node at (.5, .75) {$p$};
                \draw (-1,0) -- (-1,1) -- (-1.25, 1) -- (-.75, 1) -- (-1, 1) -- (-1,0) -- (-1.25, 0) -- (-.75, 0); \node at (-1.25, .5) {$h$};
                \draw (-2,0) -- (-2,2) -- (-2.25, 2) -- (-1.75, 2) -- (-2, 2) -- (-2,0) -- (-2.25, 0) -- (-1.75, 0); \node at (-2.25, 1) {$1$};
		\node at (.5, 3) {$\cD_1$}; \node at (4.5, 3) {$\cD_2$}; \node at (7.5, 1) {$C$};
		\draw (0, -1) -- (.5, -1) -- (.5, -.75) -- (.5, -1.25) -- (.5, -1) -- (0, -1) -- (0, -1.25) -- (0, -.75); \node at (.25, -1.25) {$k \ell_1$};
		\draw (0, -2) -- (1, -2) -- (1, -1.75) -- (1, -2.25) -- (1, -2) -- (0, -2) -- (0, -1.75) -- (0, -2.25); \node at (.5, -2.25) {$ \ell_1$};
		\draw (1, -1) -- (4, -1) -- (4, -.75) -- (4, -1.25) -- (4, -1) -- (1, -1) -- (1, -1.25) -- (1, -.75); \node at (2.5, -1.25) {$a$};
		\draw (4, -2) -- (5, -2) -- (5, -1.75) -- (5, -2.25) -- (5, -2) -- (4, -2) -- (4, -1.75) -- (4, -2.25); \node at (4.5, -2.25) {$ \ell_2$};
		\draw (5, -1) -- (8, -1) -- (8, -.75) -- (8, -1.25) -- (8, -1) -- (5, -1) -- (5, -1.25) -- (5, -.75); \node at (6.5, -1.25) {$b$};
            \end{tikzpicture}}
             \caption{Lemma \ref{calc1} asserts that, after scaling so $C$ has height 1, we have $a = b$ and $k = h = \frac{1}{2}$.}
            \label{fig:shearing}
\end{figure}
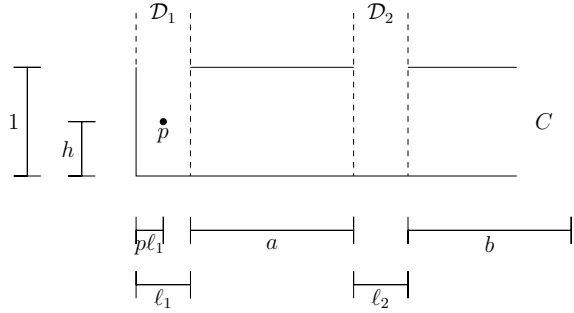
\end{lemma}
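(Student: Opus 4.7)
The plan is to mirror the proof of Apisa's \cite[Lemma 6.1]{Apisa}, which uses cylinder deformations to pin down the position of a periodic point inside a cylinder. An $\cM$-periodic point $p$ is cut out in $\cM^{*1}$ by a single $\R$-linear equation expressing the displacement $\xi_p = \int_\sigma^p \omega$ (for a fixed zero $\sigma$ of $\omega$ on $\partial C$) as a linear combination, with coefficients in $\bk(\cM)\subseteq \R$, of the period coordinates of $\cM$. Because the coefficients are real, the equation splits into a horizontal part $x_p = L_x$ (depending only on horizontal periods) and a vertical part $y_p = L_y$ (depending only on vertical periods).

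The argument uses the CDT deformations $a_s^{\cD_1}, a_s^{\cD_2}, a_s^C, u_s^C, u_s^{\cD_1}, u_s^{\cD_2}$, each of which keeps the surface in $\cM$ by Theorem~\ref{T:CDT}. By hypotheses (1) and (2), $a_s^{\cD_i}$ uniformly scales $\ell_i$ by $e^s$ while fixing the other widths $\ell_{3-i}, a, b$ and the height $h_C$. Comparing the motion of $p$ on the surface to the change predicted by $L_x$ forces, after relabelling $\cD_1$ and $\cD_2$, that $p \in \cD_1\cap C$ with $x_p = q\ell_1$ for some $q \in \bk(\cM)\cap(0,1)$, and rules out any contribution from the $\cD_2$-class. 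Hypothesis (3) ensures that $a_s^C$ uniformly scales $h_C$ across all $\cM$-parallels of $C$, yielding $y_p = r h_C$ for some $r\in\bk(\cM)\cap(0,1)$.

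Next I would use the shears to couple $q$ and $r$. Under $u_s^C$, the physical motion of $p$ inside $C$ is $(sy_p, 0)$, which must agree with the change in $L_x$ produced by the shear's effect on the real parts of cross-curve periods of $C$. Under $u_s^{\cD_1}$, the motion is $(0, sx_p)$, and must agree with the change in $L_y$. These coupling equations, together with a symmetry argument that exchanges the two components of $C \setminus (\cD_1\cup\cD_2)$ of widths $a$ and $b$, force $q = r = \tfrac12$ and $a = b$.

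The main obstacle will be the precise bookkeeping in the coupling equations: identifying exactly which periods enter $L_x$ and $L_y$, verifying that each deformation alters them by the amount required, and deriving enough independent relations to pin down the fractions $q, r$ and the symmetry $a=b$ uniquely. The authors explicitly note that the proof is identical to that of Apisa's Lemma 6.1 in \cite{Apisa}, so I would follow that template to carry out the computation in the present (slightly more general) setting.
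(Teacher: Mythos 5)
Your meta-plan (run the argument of Apisa's Lemma 6.1, which the paper itself invokes with no further proof) is the right one, but the mechanism you sketch would not close the argument. The infinitesimal comparisons you describe are only consistency relations: writing $\xi_p=\sum c_i z_i$ with $c_i\in\R$ and differentiating along $a_s^{\cD_1}$, $a_s^{\cD_2}$, $a_s^{C}$, $u_s^{C}$, $u_s^{\cD_1}$ one finds that the height of $p$ in $C$ and (if $p$ lies in $\cD_1\cap C$) its horizontal offset are fixed multiples of $h_C$ and $\ell_1$ determined by the $c_i$ — but a point lying in one of the two complementary rectangles of widths $a,b$ satisfies all of these relations just as well (it simply does not move under the $\cD_i$-deformations, and the relations reduce to integrality statements about crossing numbers). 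So these comparisons neither force $p\in\cD_1\cap C$ nor pin the fractions to $\tfrac12$; and there is no ``symmetry exchanging the two components of $C\setminus(\cD_1\cup\cD_2)$'' to appeal to — $a=b$ is a conclusion of the lemma, not a symmetry of the surface, and in general no such automorphism exists.

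The missing idea — and the actual reason hypotheses (2) and (3) are stated in terms of rational/equal ratios — is the use of simultaneous full Dehn twists. Condition (3) makes all cylinders $\cM$-parallel to $C$ have equal modulus, so some time of the twist $u_t^{\mathcal C}$ realizes a full Dehn twist in every cylinder of the class and returns the unmarked surface to itself, while (by the cylinder deformation theorem applied in the marked stratum) carrying $p$ to a new $\cM$-periodic point on the same surface, displaced horizontally inside $C$ by $(h/h_C)\cdot(\text{circumference of }C)$; condition (2) plays the analogous role for the $\cD_i$. Playing this auxiliary periodic point (and, if needed, its further twist images, which form a finite set since the fiber of the marking over $(X,\omega)$ is finite) against the stretches $a_s^{\cD_1}$ and $a_s^{\cD_2}$ — which change the circumference of $C$ through $\ell_1$ or $\ell_2$ alone — forces the pair $\{p,\,p'\}$ to bisect each of the intervals $\cD_1\cap C$ and $\cD_2\cap C$; since each $\cD_i\cap C$ is a single interval (hypothesis (1)), this rules out $p$ lying in a complementary rectangle, gives $h/h_C=\tfrac12$, places $p$ at the center of $\cD_1\cap C$ (and $p'$ at the center of $\cD_2\cap C$), and yields $a=b$ by comparing the distance from $p$ to $p'$ with half the circumference. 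Without this global twist step your outline cannot derive the lemma; with it, you are carrying out exactly the computation the paper delegates to Apisa's Lemma 6.1.
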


The lemma allows for multiple intersections of cylinders in $\cD_i$ and $C$, but requires that the union of these intersections be a single rectangle for each $i$.

We will apply Lemma~\ref{calc1} to the  vertically and horizontally periodic surfaces in Figure \ref{F:Prym3A}. Note that our convention is that, except where indicated otherwise,  opposite edges are identified when giving polygonal presentations for surfaces.  

The Cylinder Deformation Theorem implies that these surfaces are generic whenever the ratio of moduli of two inequivalent vertical and horizontal cylinders is irrational. Therefore, identifying the periodic points on these generic surfaces is sufficient to identify the periodic points in the corresponding strata of quadratic differentials.   

\begin{figure}[H]
    \begin{subfigure}[b]{0.31\textwidth}
        \centering
        \resizebox{.8\linewidth}{!}{\begin{tikzpicture}
        		\draw (0,0) -- (0,1) -- (1,1) -- (1,3) -- (3,3) -- (3,2) -- (2,2) -- (2,0) -- (0,0);
		\draw[dotted] (1,0) -- (1,1) -- (2,1);
		\draw[dotted] (1,2) -- (2,2) -- (2,3);
		\draw[black, fill] (.5,.5) circle[radius=1pt];
		\draw[black, fill] (1.5,.5) circle[radius=1pt];
		\draw[black, fill] (1.5,2.5) circle[radius=1pt];
		\draw[black, fill] (2.5,2.5) circle[radius=1pt];
                \draw[black] (1.5,1.5) circle[radius=2pt];
		\draw[black] (1.5,3) circle[radius=2pt];        
	\end{tikzpicture}
        }
        \label{SF:Prym3}
    \end{subfigure}
    \qquad
        \begin{subfigure}[b]{0.4\textwidth}
        \centering
        \resizebox{.8\linewidth}{!}{\begin{tikzpicture}
        		\draw (0,0) -- (0,2) -- (-1,2) -- (-1,3) -- (1,3) -- (1,4) -- (2,4) -- (2,2) -- (3,2) -- (3,1) -- (1,1) --  (1,0) -- (0,0);
		\draw[dotted] (0,1) -- (1,1) -- (1,3) -- (2,3) -- (2,2) -- (0,2) -- (0,3) -- (2,3) -- (2,1);
		\draw[black, fill] (-.5,2.5) circle[radius=1pt];
		\draw[black, fill] (1,2.5) circle[radius=1pt];
		\draw[black, fill] (1,1.5) circle[radius=1pt];
		\draw[black, fill] (2.5,1.5) circle[radius=1pt];
		%
		\draw[black] (.5,.5) circle[radius=2pt];
		\draw[black] (.5,2) circle[radius=2pt];
		\draw[black] (1.5,2) circle[radius=2pt];
		\draw[black] (1.5,3.5) circle[radius=2pt];  

	\end{tikzpicture}
        }
        \label{SF:Prym4}
    \end{subfigure}
\caption{$\tilde{\mathcal{Q}}(3, -1^3)$ (left) and $\tilde{\mathcal{Q}}(5, -1)$ (right).} 
\label{F:Prym3A}
\end{figure}
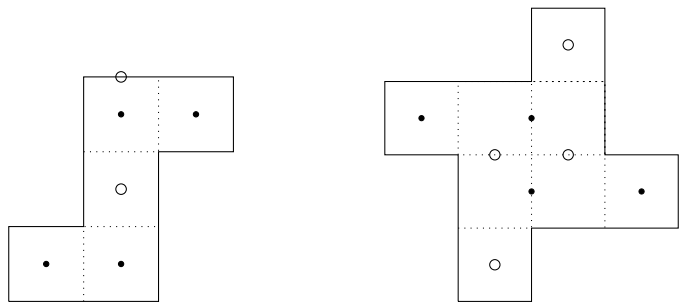 

\noindent \textbf{Periodic points in $\mathcal{Q}(3, -1^3)$: } Consider the surface on the left in Figure~\ref{F:Prym3A}. Letting $C$ be either the top or bottom horizontal cylinder, and $\cD_1$ and $\cD_2$  be the two equivalence classes of vertical cylinders, Lemma~\ref{calc1} implies that any periodic point contained in one of these two horizontal cylinders must be at the points indicated with solid dots. Letting $C$ be the middle vertical cylinder and $\cD_1$ and $\cD_2$ be the two equivalence classes of horizontal cylinders, Lemma~\ref{calc1} implies that any periodic point in $C$ must at the points indicated with circles. Since any of the solid dots can be moved into the middle vertical cylinder by a Dehn twist in horizontal cylinders, the solid dots are not periodic points (since they don't move onto circled points). The Dehn twist is possible by the Cylinder Deformation Theorem. 

Note the three cylinders labelled $C$ in the preceding paragraph cover the whole surface except for vertical saddle connection in the middle vertical cylinder and two horizontal saddle connections on the top and bottom horizontal cylinder. However, a point on these saddle connections can be moved off it by a Dehn twist in the simple horizontal or vertical cylinder whose core curve crosses the saddle connection. We conclude that any periodic point must lie in the orbit of the points marked with circles and hence be a fixed point of the involution. This proves Theorem~\ref{T:basecase} for $\tilde{\mathcal{Q}}(3, -1^3)$. 

\noindent \textbf{Periodic points in $\mathcal{Q}(5, -1)$: } Consider the surface on the right in Figure \ref{F:Prym3A}. Similarly to the previous case, setting $C$ to be either of the two middle horizontal cylinders and the $\cD_i$ to be the two vertical equivalence classes, Lemma~\ref{calc1} implies that any periodic point in the union of the two middle horizontal cylinders must be one of the solid dots. Similarly, setting $C$ to be either of the two middle vertical cylinders and the $\cD_i$ to be the two horizontal equivalence classes, Lemma~\ref{calc1} implies that that any periodic point in the union of the two middle vertical cylinders must be one of the circled points.

The central point of the surface is fixed by the involution and is hence a periodic point; let us exclude this from our discussion. Any other point can be moved to the interior of one of the four cylinders labelled $C$ in the previous paragraph using Dehn twists. To conclude our analysis of $\tilde{\mathcal{Q}}(5, -1)$ it suffices to show that none of the eight solid or circled points drawn on the surface on the right in Figure \ref{F:Prym3A}  are periodic. 

By using Dehn twists and symmetry, it suffices to show that the point $p$ in Figure \ref{F:Prym4Special} is not periodic. 

\begin{figure}[h]
            \centering
        \resizebox{0.45\linewidth}{!}{\begin{tikzpicture}
        		\draw (0,0) -- (0,3) -- (-1,3) -- (-1,5) -- (2,5) -- (2,6) -- (4,6) -- (4,3) -- (5,3) -- (5,1) -- (2,1) -- (2,0) -- (0,0);
		\node at (-.25, .5) {$a$};
		\node at (-.25, 2) {$1$};
		\node at (-1.25, 4) {$1$};
		\node at (-.5, 5.5) {$a$};
		\node at (1, 5.5) {$1$};
		\node at (2.5, 6.5) {$a$};
		\node at (3.5, 6.5) {$1-a$};
		\draw[dashed] (-1,4) -- (0, 5);
		\draw[dashed] (-1,3) -- (1,5);
		\draw[dashed] (0,3) -- (3,6);
		\draw[dashed] (0,2) -- (4,6);
		\draw[dashed] (0,1) -- (4,5);
		\draw[dashed] (0,0) -- (4,4);
		\draw[dashed] (1,0) -- (4,3);
		\draw[dashed] (3,1) -- (5,3);
		\draw[dashed] (4,1) -- (5,2);
		\draw[black, fill] (2,3.9) circle[radius=1pt];
		\node at (2.25, 3.9) {$p$};
	\end{tikzpicture}}
\caption{Concluding the classification of periodic points in $\mathcal{Q}(5, -1)$} 
\label{F:Prym4Special}
\end{figure}

In this figure, for any $1>a>0$ the slope 1 direction decomposes into four cylinders. With $a=\frac12$, $p$ is on the boundary of one of the cylinders, but for nearby $a$ it is not. By continuity, after changing $a$ the point $p$ does not have rational height in the slope 1 cylinder in which it lies, showing that $p$ is not a periodic point by \cite[Lemma 5.4]{Apisa}. This proves Theorem~\ref{T:basecase} for $\tilde{\mathcal{Q}}(5, -1)$.

\bibliography{mybib}
\bibliographystyle{amsalpha}

\end{document}